% !TeX encoding = UTF-8
% !TeX spellcheck = en_US
\documentclass[12pt,a4paper]{amsart}
\setlength{\emergencystretch}{2em}

\usepackage[marginratio=1:1]{geometry}

\usepackage[T1]{fontenc}
\usepackage[utf8]{inputenc}
\usepackage[american]{babel}
\usepackage[draft=false]{hyperref}
\usepackage{enumitem}
\usepackage{graphicx}

%\usepackage[backend=biber,
%	url=false,
%	isbn=true,
%	backref=true,
%	citestyle=alphabetic,
%	bibstyle=alphabetic,
%	autocite=inline,
%	sorting=nyt,]{biblatex}

%\addbibresource{top_dynamics.bib}

\usepackage{amsfonts}
\usepackage{amssymb}
\usepackage{tikz-cd}
\usepackage{float}
\usepackage{verbatim}
\setcounter{section}{-1}

\makeatletter
\providecommand*{\twoheadrightarrowfill@}{%
	\arrowfill@\relbar\relbar\twoheadrightarrow
}
\providecommand*{\twoheadleftarrowfill@}{%
	\arrowfill@\twoheadleftarrow\relbar\relbar
}
\providecommand*{\xtwoheadrightarrow}[2][]{%
	\ext@arrow 0579\twoheadrightarrowfill@{#1}{#2}%
}
\providecommand*{\xtwoheadleftarrow}[2][]{%
	\ext@arrow 5097\twoheadleftarrowfill@{#1}{#2}%
}
\makeatother

\newcommand{\A}{{\mathcal A}}
\newcommand{\B}{{\mathfrak B}}

\newcommand{\C}{{\mathfrak C}}
\newcommand{\M}{{\mathcal M}}

\newcommand{\R}{{\mathbb{R}}}

%%%These macros have been introduced to aid in providing uniform notation for the middle symbol in set-builder notation and for the colon in function description.

\DeclareMathOperator{\tp}{{tp}}
\DeclareMathOperator{\Th}{{Th}}

\DeclareMathOperator{\cl}{{cl}}

\DeclareMathOperator{\id}{{id}}
\DeclareMathOperator{\aut}{{Aut}}

\DeclareMathOperator{\dcl}{{dcl}}
\DeclareMathOperator{\st}{{st}}
\DeclareMathOperator{\ext}{{ext}}
\DeclareMathOperator{\SL}{{SL}}
\DeclareMathOperator{\AUT}{{AUT}}
\DeclareMathOperator{\acl}{{acl}}

\newtheorem{thm}{Theorem}[section]
\newtheorem{conj}[thm]{Conjecture}
\newtheorem{ques}[thm]{Question}
\newtheorem{problem}[thm]{Problem}
\newtheorem{lem}[thm]{Lemma}
\newtheorem{fct}[thm]{Fact}
\newtheorem{cor}[thm]{Corollary}
\newtheorem{prop}[thm]{Proposition}

\theoremstyle{remark}
\newtheorem{rem}[thm]{Remark}
\theoremstyle{definition}
\newtheorem{dfn}[thm]{Definition}

\newtheorem*{clm*}{Claim}
\newtheorem{ex}[thm]{Example}
\newcounter{claimcounter}[thm]
\newenvironment{clm}{\stepcounter{claimcounter}{\noindent {\textbf{Claim}} \theclaimcounter:}}{}

\title{Definable topological dynamics}
\author{Krzysztof Krupi\'nski}
\email{kkrup@math.uni.wroc.pl}
\address{
	Instytut Matematyczny, Uniwersytet Wrocławski\\
	pl. Grunwaldzki 2/4\\
	50-384 Wrocław, Poland
}
\thanks{The author is supported by Narodowe Centrum Nauki grants 2012/07/B/ST1/03513 and 2015/19/B/ST1/01151}

\keywords{Model theory, topological dynamics, definable flows}
%, model-theoretic connected components}

\subjclass[2010]{03C45, 54H20}

\date{}

\begin{document}
	
	\begin{abstract}
		For a group $G$ definable in a first order structure $M$ we develop basic topological dynamics in the category of definable $G$-flows. In particular, we give a description of the universal definable $G$-ambit and of the semigroup operation on it. We find a natural epimorphism from the Ellis group of this flow to the definable Bohr compactification of $G$, that is to the quotient $G^*/{G^*}^{00}_M$ (where $G^*$ is the interpretation of $G$ in a monster model). 
%Parallelly, we obtain analogous results locally, i.e. in the category of $\varphi$-definable $G$-flows for a fixed formula $\varphi$. We propose to view certain topological-dynamic and model-theoretic invariants as Polish structures which leads to some natural questions.
More generally, we obtain these results locally, i.e. in the category of $\Delta$-definable $G$-flows for any fixed set $\Delta$ of formulas of an appropriate form. In particular, we define local connected components ${G^*}^{00}_{\Delta,M}$ and ${G^*}^{000}_{\Delta,M}$, and show that $G^*/{G^*}^{00}_{\Delta,M}$ is the $\Delta$-definable Bohr compactification of $G$. We also note that some deeper arguments from \cite{KrPi} can be adapted to our context, showing for example that our epimorphism from the Ellis group to the $\Delta$-definable Bohr compactification factors naturally yielding a continuous epimorphism from the $\Delta$-definable generalized Bohr compactification to the $\Delta$-definable Bohr compactification of $G$. Finally, we propose to view certain topological-dynamic and model-theoretic invariants as Polish structures which leads to some observations and questions.
	\end{abstract}

	\maketitle

	\section{Introduction}

Topological dynamics was introduced to model theory by Newelski in \cite{Ne1,Ne2} and then further developed by various authors, e.g. in \cite{GiPePi}, \cite{Ja}, \cite{YaLo}, \cite{ChSi}, \cite{KrPi} and \cite{KrPiRz}. There are several natural categories to develop topological dynamics in model theory. The most natural are the categories of definable and externally definable ``objects''. So far, however, mostly the externally definable category has been studied (the definable one was investigated under the extra assumption of definability of types, which makes both categories the same). In this paper, we develop basic topological dynamics in the category of definable flows, without the definability of types assumption.

Recall that a {\em $G$-flow} is a pair $(G,X)$, where $G$ is a group acting on a compact, Hausdorff space by homeomorphisms. We always consider discrete flows, i.e. with no topology on $G$ (or, if one prefers, with the discrete topology on $G$). A {\em $G$-ambit} is a $G$-flow $(G,X,x_0)$ with a distinguished point $x_0$ whose $G$-orbit is dense. With the obvious notion of a homomorphism of $G$-ambits, a universal $G$-ambit always exists and is unique; this universal ambit is exactly $(G,\beta G, e)$ (see \cite[Chapter 1, Proposition 2.6]{Gl}), where $\beta G$ is the Stone-$\check{\mbox{C}}$ech compactification of $G$.

Now, we recall some flows which have been investigated in model theory.

Let $G$ be a group $\emptyset$-definable in a first order structure $M$. By $S_G(M)$ we denote the space of complete types over $M$ containing the formula defining $G$; equivalently, this is the space of ultrafilters in the Boolean algebra of all definable (with parameters from $M$) subsets of $G$, equipped with the Stone topology. By $S_{G,\ext}(M)$ we denote the space of all externally definable complete types over $M$ containing $G$, that is the space of ultrafilters 
in the Boolean algebra of all externally definable subsets of $G$ (i.e. subsets which are intersections with $G$ of sets definable in arbitrary elementary extensions of $M$). 

Following \cite{GiPePi} and \cite{KrPi}, we will use the notion of [externally] definable $G$-flows. Namely, let $C$ be a compact, Hausdorff space. A map $f\colon G \to C$ is said to be {\em [externally] definable} if for all disjoint, closed subsets $C_1$ and $C_2$ of $C$ the preimages $f^{-1}[C_1]$ and $f^{-1}[C_2]$ can be separated by an [externally] definable subset of $G$. An {\em [externally] definable $G$-flow} is a $G$-flow $(G,X)$ such that for every $x \in X$ the map $f_x\colon G \to X$ defined by $f_x(g)=gx$ is [externally] definable.
An {\em [externally] definable $G$-ambit} is an [externally] definable $G$-flow $(G,X,x_0)$ with a distinguished point $x_0$ such that the orbit $Gx_0$ is dense in $X$.

Now, $G$ acts by translations as groups of homeomorphisms of the compact spaces $S_{G}(M)$, $S_{G,\ext}(M)$ and $\beta G$, turning them into $G$-flows. As mentioned before, $(G,\beta G, e)$ is the universal $G$-ambit. Similarly, $(G,S_{G,\ext}(M),\tp_{\ext}(e/M))$ is the universal externally definable $G$-ambit. In particular, by the universality, there is a left-continuous semigroup operation on $S_{G,\ext}(M)$ turning it into a semigroup isomorphic to the Ellis semigroup $E(S_{G,\ext}(M))$ (for the definitions of Ellis [semi]groups see Section \ref{section: preliminaries}). So one can consider both the minimal ideals and the Ellis group inside $S_{G,\ext}(M)$ instead of in $E(S_{G,\ext}(M))$.
However, the ambit $(G,S_G(M),\tp(e/M))$ is not necessarily definable (it is so if all types in $S_G(M)$ are definable in which case $S_G(M)=S_{G,\ext}(M)$), and we do not have a natural semigroup operation on $S_G(M)$. This makes $S_{G,\ext}(M)$ and the category of externally definable $G$-flows easier to work with, and that is why topological dynamics has been developed in this context. On the other hand,  $S_G(M)$ and definable flows are simpler and more natural objects from the point of view of model theory. This motivates our interest in the category of definable $G$-flows. Another, more concrete motivation stems from the fact that even if the language and the model $M$ are both countable, the universal externally definable $G$-ambit $S_{G,\ext}(M)$ may be ``big'' (e.g. not metrizable) which causes some difficulties in the application of topological dynamics to Borel cardinalities of bounded invariant equivalence relations \cite{KrPiRz}. In contrast, as we explain in this paper, under such a countability assumption, the universal definable $G$-ambit is always metrizable, which may lead to simplifications of some proofs concerning Borel cardinalities or even to new results. On the other hand, our research leads to interesting questions and relations with Polish structures introduced in \cite{Kr} and further developed by several authors, e.g. in \cite{KrWa}.

In this paper, we will consider even a more general category than that of definable $G$-flows, namely the category of $\Delta$-definable $G$-flows, where $\Delta$ is an arbitrary set of formulas $\delta(x;y,z,\bar t)$ of the form $\varphi(y\cdot x \cdot z, \bar t)$ which contains a formula defining $G$. The definitions of $\Delta$-formulas, the space $S_{G,\Delta}(M)$, and $\Delta$-definable $G$-flows are given in Section \ref{section: preliminaries}.

In Section \ref{section: definable ambit}, we present the universal $\Delta$-definable $G$-ambit as the quotient of $S_{G,\Delta}(M)$ by some closed equivalence relation $E_\Delta$, we describe the semigroup operation on $S_{G,\Delta}(M)/E_\Delta$, and, using it, we give a description of the relation $E_\Delta$. 

In Section \ref{section: connected components}, we define local versions of the connected components ${G^*}^{00}_M$ and ${G^*}^{000}_M$, namely ${G^*}^{00}_{\Delta,M}$ and ${G^*}^{000}_{\Delta,M}$, and we show that $G^*/{G^*}^{00}_{\Delta,M}$ is the $\Delta$-definable Bohr compactification of $G$. The proof follows the lines of the argument from \cite{GiPePi} showing that $G^*/{G^*}^{00}_{M}$ is the definable Bohr compactification, but it also requires some additional observations (in particular, the fact that  ${G^*}^{00}_{\Delta,M}$ is a normal subgroup is not completely obvious, and we need Lemma \ref{lemma: mu is contained} in order to show that the quotient map $G \to G^*/{G^*}^{00}_{\Delta,M}$ is $\Delta$-definable). Using results from Section \ref{section: definable ambit}, we find an explicitly given epimorphism $\theta$ from the universal $\Delta$-definable $G$-ambit $S_{G,\Delta}(M)/E_\Delta$ to $G^*/{G^*}^{00}_{\Delta,M}$, whose restriction to the Ellis group is also an epimorphism. We formulate interesting questions concerning an analogous statement for ${G^*}/{G^*}^{000}_{\Delta,M}$.

In Section \ref{section: results from KrPi}, we explain that some deeper arguments from \cite{KrPi} can be adapted to the $\Delta$-definable context, which results in Theorems \ref{theorem: gen. Bohr comp.}, \ref{theorem: factorization by H(uM)} and \ref{theorem: strong amenability}. We also use \cite[Theorem 5.6]{ChSi} to get a variant of this theorem in our definable context (see Corollary \ref{corollary: from ChSi}).

In Section \ref{section: Polish structures}, we explain how to treat various invariants (e.g. the Ellis group or the $\Delta$-definable generalized Bohr compactification of $G$) as Polish structures, which suggests that in some situations one could expect to get structural and topological theorems about these invariants via application of theorems on small Polish structures. We make a few observations in this direction and formulate some questions.

\section{Preliminaries}\label{section: preliminaries}

Detailed preliminaries concerning topological dynamics in model theory were given in several papers, so here we only recall a few things. For more details, see e.g. \cite[Section 1]{KrPi}. A good reference for classical topological dynamics is for example \cite{Au} or \cite{Gl}.

\begin{dfn}
The {\em Ellis semigroup} of the flow $(G,X)$, denoted by $E(X)$, is the closure of the collection of functions $\{\pi_g \colon g \in G\}$ (where $\pi_g\colon X \to X$ is given by 
$\pi_g(x)=gx$) in the space $X^X$ equipped with the product topology, with composition as semigroup operation.
\end{dfn}

This semigroup operation is continuous in the left coordinate, $E(X)$ is also a $G$-flow, and minimal subflows of $E(X)$ are exactly minimal left ideals with respect to the semigroup structure on $E(X)$, in particular they are cosed and so compact. The following was proved by Ellis (e.g. see \cite[Propositions 3.5, 3.6]{El} and \cite[Chapter 1, Propositions 2.3, 2.5]{Gl}).

\begin{fct}\label{Ellis theorem}
Let ${\M}$ be a minimal left ideal in $E(X)$, and let $J({\M})$ be the set of all idempotents in ${\M}$. Then:\\
i) For any $p \in {\M}$, $E(X)p={\M}p={\M}$.\\
ii) ${\M}$ is the disjoint union of sets $u{\M}$ with $u$ ranging over $J({\M})$.\\
iii) For each $u \in J({\M})$, $u{\M}$ is a group with the neutral element $u$, where the group operation is the restriction of the semigroup operation on $E(X)$.\\
iv) All the groups $u{\M}$ (for $u \in J({\M})$) are isomorphic, even when we vary the minimal ideal ${\M}$.
\end{fct}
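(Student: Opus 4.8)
The statement is the classical structure theorem of Ellis for the compact right-topological semigroup $E(X)$: its multiplication is continuous in the left coordinate because, for fixed $q$, the value $(pq)(x)=p(q(x))$ depends continuously on $p$ (it is a coordinate projection). The whole plan rests on a single lemma, the existence of idempotents in every nonempty compact subsemigroup, which I would prove first by a Zorn's lemma argument: among the nonempty closed subsemigroups pick a minimal one $T$; for $p\in T$ the set $Tp=\rho_p(T)$ (where $\rho_p$ is right multiplication, the continuous operation) is again a nonempty closed subsemigroup contained in $T$, so $Tp=T$ by minimality, whence $\{q\in T:qp=p\}$ is a nonempty closed subsemigroup equal to $T$, giving $p^2=p$.

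For part (i), I would first note that minimal left ideals exist and are closed: for $p\in{\M}$ the set $E(X)p=\rho_p(E(X))$ is a closed left ideal contained in ${\M}$, so minimality yields $E(X)p={\M}$ and in particular ${\M}$ is closed; then ${\M}p\subseteq E(X)p={\M}$ is itself a nonempty closed left ideal, so ${\M}p={\M}$ as well. Two identities driving everything else fall out of (i): writing an arbitrary $p\in{\M}$ and any $v\in J({\M})$ as $p=sv$ (possible since ${\M}={\M}v$) gives $pv=svv=sv=p$, so right multiplication by an idempotent fixes ${\M}$ pointwise; the same trick applied to two idempotents gives the left-zero law $uv=u$ for $u,v\in J({\M})$.

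Parts (ii) and (iii) then follow quickly. Applying the idempotent lemma to the compact subsemigroup $\{q\in{\M}:qp=p\}$ (nonempty by (i)) produces, for each $p$, an idempotent $u$ with $up=p$, so $p=up\in u{\M}$ and ${\M}=\bigcup_{u\in J({\M})}u{\M}$. Each $u{\M}$ is a group: $u$ is a two-sided identity (left because $u(um)=um$, right by the fixing identity above), the set is closed under the operation since $u(xy)=xy$, and a left inverse of $x$ comes from ${\M}x={\M}\ni u$ by solving $tx=u$ and replacing $t$ by $ut\in u{\M}$. For disjointness, if $w\in u{\M}\cap v{\M}$ then multiplying $vw=w$ on the right by the inverse of $w$ in $u{\M}$ gives $vu=u$, which together with the left-zero law $vu=v$ forces $u=v$.

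Isomorphism of the groups, part (iv), is where I expect the real work. For idempotents $u,v$ in the \emph{same} ${\M}$ I would check that left multiplication $L_v(p)=vp$ is an isomorphism $u{\M}\to v{\M}$: it is a homomorphism because $(vp)(vq)=v(pv)q=vpq$ using $pv=p$, and its inverse is $L_u$ since $u(vp)=(uv)p=up=p$ using the left-zero law. The genuinely delicate case is when $u$ and $v$ lie in \emph{different} minimal ideals; here I would pass to the minimal two-sided ideal $\bigcup{\M}$, observe that it is a completely simple semigroup and that each $u{\M}$ is one of its maximal subgroups, and conclude isomorphism either from the Rees--Suschkewitsch description or, more concretely, by verifying that a suitable right-multiplication map $p\mapsto pv$ between matched idempotents is a mutually inverse homomorphism. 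Arranging the cross-ideal matching correctly, and checking the homomorphism property, is the main obstacle.
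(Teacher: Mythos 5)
The paper does not prove Fact \ref{Ellis theorem} at all: it is stated as a known result with pointers to Ellis and to Glasner, so there is no in-paper argument to compare yours against. Your proof is the standard one from those sources and is essentially correct. The Ellis--Numakura idempotent lemma via Zorn and right-multiplication continuity is right; the derivation of $E(X)p=\M p=\M$, the identities $pv=p$ and $uv=u$ for $v,u\in J(\M)$, the decomposition $\M=\bigcup_u u\M$, the group structure on $u\M$ (a monoid with two-sided identity and left inverses is a group), and the disjointness argument are all sound. For (iv) within a single ideal, $L_v$ with inverse $L_u$ works exactly as you say. The only place you leave real work undone is the cross-ideal case of (iv), and you correctly identify it: one needs, for $u\in J(\M)$ and a second minimal ideal $\N$, a matched idempotent $v\in J(\N)$ with $uv=v$ and $vu=u$ (obtained by taking an idempotent $v_0\in\N$ with $v_0u=u$ from the closed subsemigroup $\{q\in\N: qu=u\}$ and setting $v=uv_0$), after which $p\mapsto pv$ and $q\mapsto qu$ are mutually inverse homomorphisms. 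This is precisely \cite[Chapter 1, Proposition 2.5]{Gl}, and in fact the paper carries out this very computation later, in the proof of Corollary \ref{corollary: from ChSi}, where the isomorphism $f(x)=vxu_0$ between Ellis groups of different minimal ideals is verified by hand. So your sketch of the delicate case matches what the paper itself does when it needs it.
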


For a given group $G$ we say that a $G$-ambit $(G,X,x_0)$ is {\em universal} if for every $G$-ambit $(G,Y,y_0)$ there exists a (unique) homomorphism $h \colon X \to Y$ of $G$-flows mapping $x_0$ to $y_0$. The following fact is fundamental \cite[Chapter 1, Proposition 2.6]{Gl}.

\begin{fct}\label{universality of beta G}
$(G,\beta G, e)$ is the unique up to isomorphism universal $G$-ambit.
\end{fct}

Using this fact, one gets an ``action'' of $\beta G$ on any $G$-flow $(G,X)$, namely for $x \in X$ there is a unique flow homomorphism $h_x\colon (G,\beta G, e) \to (G,X,x)$, and for $p \in \beta G$ we define $px=h_x(p)$.  More explicitly, this action is given by $px=\lim g_ix$ for any net $(g_i)$ of elements of $G$ converging to $p$ in $\beta G$. In particular, $\beta G$ acts on itself, and denoting this action by $*$, one has $(p*q)x=p(qx)$ for all $p,q \in \beta G$ and $x \in X$. In particular, $*$ is a semigroup operation on $\beta G$ which is continuous on the left and whose restriction to $G \times G$ is the original  group operation on $G$. One easily checks that $(\beta G,*) \cong E(\beta G)$ (by sending $p \in \beta G$ to the function $(x \mapsto px) \in E(\beta G)$). In particular, Fact \ref{Ellis theorem} applies to $(\beta G, *)$ in place of $E(\beta G)$.\\

In this paper, $M$ denotes a model of a first order theory in a language $\mathcal L$, and $\C \succ M$ is a monster model; $G$ will be a group $\emptyset$-definable in $M$, and $G^*$ its interpretation in $\C$. Group multiplication, denoted by $\cdot$, will be often skipped for simplicity, but sometimes we will write it explicitly.\\ 
%to avoid confusion or to recall that it is there.\\

As above, a {\em universal [externally] definable $G$-ambit} is defined as an [externally] definable $G$-ambit which maps homomorphically (by a (unique) homomorphism of $G$-ambits) to an arbitrary [externally] definable $G$-ambit. It is clear (by general category theory reasons) that, in each of these two categories, if a universal $G$-ambit exists, then it is unique up to isomorphism. 
As mentioned in the introduction, $(G,S_{G,\ext}(M),\tp_{\ext}(e/M))$ is the unique up to isomorphism universal externally definable $G$-ambit, so, in the same way as above, we get a semigroup operation on $S_{G,\ext}(M)$. The existence of the universal definable $G$-ambit is justified in Remark \ref{remark: existence of universal definable G-ambit}. The main point of the current paper is to describe the universal definable $G$-ambit as well as its local versions.

Let $\Delta$ be a subset of the set of all formulas (without parameters) in the language $\mathcal L$ in the variable $x$ of the same sort as $G$ and some parametric variables. By a {\em $\Delta$-formula} over $M$ we mean 
%a formula over $M$ which is equivalent to 
a Boolean combination of instances of formulas from $\Delta$ with parameters from $M$. By $S_{G,\Delta}(M)$ we denote the compact space of complete $\Delta$-types over $M$ concentrating on $G$, equivalently, the space of ultrafilters of relatively $\Delta$-definable over $M$ subsets of $G$. In the case when $\Delta$ is the collection of all formulas in the variable $x$ of the same sort as $G$ and arbitrary parametric variables, $\Delta$-formulas over $M$ are just all formulas over $M$ in the variable $x$, and $S_{G,\Delta}(M)=S_G(M)$. 

Sometimes we will be using $\Delta$-types over sets of parameters other than $M$. In such situations, all the definitions of ``$\Delta$-objects'' are analogous, except we take the convention that by a $\Delta$-formula over $A$ we mean a formula over $\dcl(A)$ which is equivalent to a Boolean combination of instances of formulas from $\Delta$ with parameters from $\dcl(A)$ (we do it in order to avoid writing ``$\dcl$'' many times in the paper). Another option is to define $\Delta$-formulas over $A$ as those formulas over $A$ which are equivalent to Boolean combinations of instances of formulas from $\Delta$ with arbitrary parameters (but then some statements would be slightly less general).

We naturally extend the definition of a definable function from $G$ to a compact space $C$ to a $\Delta$-definable context.

\begin{dfn}\label{definable functions}
Let $C$ be a compact space. A map $f\colon G \to C$ is {\em $\Delta$-definable} if for all disjoint, closed subsets $C_1$ and $C_2$ of $C$ the preimages $f^{-1}[C_1]$ and $f^{-1}[C_2]$ can be separated by a $\Delta$-definable subset of $G$.
\end{dfn}

Now, we recall the definition of a definable map defined on the monster model and we extend it to local versions.

\begin{dfn}
Let $C$ be a compact space.\\
i) A function $f\colon G^* \to C$ is {\em $M$-definable} if for every closed $C_1 \subseteq C$ the preimage $f^{-1}[C_1]$ is type-definable over $M$.\\
ii) A function $f\colon G^* \to C$ is {\em $\Delta$-definable over $M$} if for every closed $C_1 \subseteq C$ the preimage $f^{-1}[C_1]$ is $\Delta$-type-definable over $M$ (i.e. the intersection of sets defined by $\Delta$-formulas over $M$).
\end{dfn}

%In the case when $\Delta$ is the collection of all formulas in the variable $x$ of the same sort as $G$ and arbitrary parametric variables, the next lemma is  \cite[Lemma 3.2]{GiPePi}; analogous proof to that from \cite{GiPePi} works for an arbitrary $\Delta$.
The first item of the next fact is \cite[Lemma 3.2]{GiPePi}, and the second one (generalizing the first one) can be proved analogously. As usual, $C$ denotes a compact, Hausdorff space.

\begin{lem}\label{prolongation}
1) Definable context:\\
i) If $f\colon G \to C$ is definable, then it extends uniquely to an $M$-definable function $f^*\colon G^* \to C$. Moreover, $f^*$ is given by the formula $\{f^*(a)\}=\bigcap_{\varphi \in \tp(a/M)} \cl(f[\varphi(M)])$.\\
ii) Conversely, if $f^*\colon G^* \to C$ is an $M$-definable function, then $f^*|_G\colon G \to C$ is definable.\\
2) $\Delta$-definable context:\\
i) If $f\colon G \to C$ is $\Delta$-definable, then it extends uniquely to a function $f^*\colon G^* \to C$ which is $\Delta$-definable over $M$. Moreover, $f^*$ is given by the formula $\{f^*(a)\}=\bigcap_{\varphi \in \tp_\Delta(a/M)} \cl(f[\varphi(M)])$.\\
ii) Conversely, if a function $f^*\colon G^* \to C$ is a $\Delta$-definable over $M$, then $f^*|_G\colon G \to C$ is $\Delta$-definable.
\end{lem}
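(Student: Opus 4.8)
Since the definable context of part 1) is exactly the instance of the $\Delta$-definable context of part 2) in which $\Delta$ consists of all formulas in the variable $x$ (so that $\tp_\Delta=\tp$ and ``$\Delta$-definable'' becomes ``definable''), the plan is to prove part 2) and read off part 1) as a special case; this parallels the global argument of \cite[Lemma 3.2]{GiPePi}. For part 2)(i) the first task is to see that the prescribed set $\bigcap_{\varphi\in\tp_\Delta(a/M)}\cl(f[\varphi(M)])$ is a single point. Nonemptiness is a compactness argument: $\tp_\Delta(a/M)$ is closed under conjunction, each $\varphi(M)$ is nonempty (since $\exists x\,\varphi(x)$ holds in $\C$, hence in $M$ by elementarity), so the sets $\cl(f[\varphi(M)])$ form a centered family of nonempty closed subsets of the compact space $C$ and meet. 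To get at most one point I would invoke $\Delta$-definability of $f$: if $c_1\neq c_2$ both lay in the intersection, choose disjoint closed $C_1\ni c_1$, $C_2\ni c_2$ with nonempty interiors (possible as $C$ is compact Hausdorff, hence regular) and a $\Delta$-definable $D=\psi(M)$ separating $f^{-1}[C_1]$ from $f^{-1}[C_2]$. Then $c_1\in\cl(f[\varphi(M)])$ forces $\varphi(M)\cap D\neq\emptyset$ and $c_2\in\cl(f[\varphi(M)])$ forces $\varphi(M)\setminus D\neq\emptyset$, for every $\varphi\in\tp_\Delta(a/M)$; feeding this into compactness and saturation produces realizations of $\tp_\Delta(a/M)$ satisfying $\psi$ and satisfying $\neg\psi$, so both $\psi$ and $\neg\psi$ lie in the complete $\Delta$-type $\tp_\Delta(a/M)$, a contradiction.

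That $f^*$ extends $f$ is then immediate: for $a\in G$ one has $a\in\varphi(M)$ for every $\varphi\in\tp_\Delta(a/M)$, so $f(a)$ already lies in the one-point intersection. For $\Delta$-definability of $f^*$ over $M$ I would first record the criterion that $f^*(a)\in C_1$ iff $C_1\cap\cl(f[\varphi(M)])\neq\emptyset$ for all $\varphi\in\tp_\Delta(a/M)$ (again compactness, the family being centered with one-point intersection $\{f^*(a)\}$). Negating gives $(f^*)^{-1}[C_1]=\bigcap\{\neg\varphi(\C):\varphi \text{ a }\Delta\text{-formula over }M,\ \cl(f[\varphi(M)])\cap C_1=\emptyset\}$, which is $\Delta$-type-definable since $\Delta$-formulas are closed under negation. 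For uniqueness, if $g$ is another $\Delta$-definable-over-$M$ extension with $g(a)=c\neq f^*(a)$, pick $\varphi\in\tp_\Delta(a/M)$ with $c\notin C_1:=\cl(f[\varphi(M)])$; then $\varphi(M)\subseteq f^{-1}[C_1]=g^{-1}[C_1]\cap G$, and writing $g^{-1}[C_1]=\bigcap_j\eta_j(\C)$ with $\eta_j$ $\Delta$-formulas over $M$, the inclusions $\varphi(M)\subseteq\eta_j(M)$ give $\C\models\forall x(\varphi(x)\to\eta_j(x))$ by elementarity, whence $a\in g^{-1}[C_1]$, i.e. $c\in C_1$, contradicting the choice of $C_1$.

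For part 2)(ii), given disjoint closed $C_1,C_2\subseteq C$ the preimages $(f^*)^{-1}[C_1]$ and $(f^*)^{-1}[C_2]$ are disjoint and $\Delta$-type-definable over $M$; by saturation of $\C$ a finite conjunction $\theta$ of the $\Delta$-formulas defining the first already implies the negation of a finite conjunction of those defining the second, giving a single $\Delta$-formula $\theta$ over $M$ with $(f^*)^{-1}[C_1]\subseteq\theta(\C)$ and $(f^*)^{-1}[C_2]\cap\theta(\C)=\emptyset$. Intersecting with $G$, the $\Delta$-definable set $\theta(M)$ separates $(f^*|_G)^{-1}[C_1]$ from $(f^*|_G)^{-1}[C_2]$, so $f^*|_G$ is $\Delta$-definable.

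I expect the only genuinely delicate step to be the single-point argument in (i): this is where $\Delta$-definability of $f$ is used, and where one must marshal compactness, saturation and completeness of the $\Delta$-type simultaneously. Everywhere else the passage from the classical case to the $\Delta$-case is bookkeeping, the one recurring point being that the class of $\Delta$-formulas is closed under the relevant Boolean operations (negation in the description of $(f^*)^{-1}[C_1]$ and in uniqueness, finite conjunction in the separation argument of (ii)), which is immediate from the definition of a $\Delta$-formula as a Boolean combination of instances of formulas from $\Delta$.
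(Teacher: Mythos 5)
Your proof is correct and is exactly the argument the paper relies on: the paper gives no proof of this lemma, citing \cite[Lemma 3.2]{GiPePi} for part 1) and asserting that part 2) is proved analogously, and your write-up is that standard proof carried out directly in the $\Delta$-local setting (one-point intersection via separation plus completeness of the $\Delta$-type, the explicit preimage formula for $\Delta$-type-definability, and compactness for the converse). The only wording to tighten is in the single-point step: you need $C_1,C_2$ to be closed \emph{neighborhoods} of $c_1,c_2$ (i.e. $c_i\in\operatorname{int} C_i$), not merely disjoint closed sets containing them with nonempty interiors, so that $c_i\in\cl(f[\varphi(M)])$ really forces $f[\varphi(M)]$ to meet $C_i$.
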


\begin{rem}\label{rem: prolongation}
If $f\colon G \to C$ is $\Delta$-definable, then the unique function  $f^*\colon G^* \to C$ which extends $f$ and is $\Delta$-definable over $M$ is also given by 
$\{f^*(a)\}=\bigcap_{\varphi \in \tp(a/M)} \cl(f[\varphi(M)])$.
\end{rem}

\begin{proof}
This follows from the formula in point 2)(i) of Lemma \ref{prolongation} and the observations that $\bigcap_{\varphi \in \tp(a/M)} \cl(f[\varphi(M)])$ is non-empty (which follows from the compactness of $C$) and contained in $\bigcap_{\varphi \in \tp_\Delta(a/M)} \cl(f[\varphi(M)])$.
\end{proof}

The following remark follows easily from definitions.

\begin{rem}\label{remark: factorization}
A function $f\colon G^* \to C$ is $\Delta$-definable over $M$ if and only if there is a continuous function $h: S_{G,\Delta}(M) \to C$ such that $f=h\circ r$, where $r\colon G^* \to S_{G,\Delta}(M)$ is the obvious map $a \mapsto \tp_\Delta(a/M)$. In particular, a function $f\colon G^* \to C$ is $M$-definable if and only if there is a continuous function $h: S_{G}(M) \to C$ such that $f=h\circ r$, where $r\colon G^* \to S_{G}(M)$ is the obvious map.
\end{rem}

We extend the notion of the definable flow in a natural way. Namely, a flow $(G,X)$ will be called {\em $\Delta$-definable} if for every $x \in X$ the map $f_x\colon G \to X$ given by $f_x(g)=gx$ is $\Delta$-definable.

The first part of the following observation was made in \cite[Remark 1.12]{KrPi}, and the generalization to the second part can be obtained analogously.

\begin{rem}\label{definability of products}
i) A product of definable $G$-flows is a definable $G$-flow.\\
ii) A product of $\Delta$-definable $G$-flows is a $\Delta$-definable $G$-flow.
\end{rem}

Whenever we consider the quotient of $G^*$ by a bounded, $M$-invariant equivalence relation $E$, we can equip it with the {\em logic topology} which is defined by saying that a subset of the quotient is closed if its preimage in $G^*$ is type-definable (equivalently, type-definable over $M$). If $E$ is type-definable, then $G^*/E$ is a compact, Hausdorff space; if $E$ is only invariant, then $G^*/E$ is only quasi-compact. The definition of the logic topology applies in particular to the quotient $G^*/H$, where $H$ is an arbitrary bounded index, $M$-invariant subgroup of $G^*$. If $H$ is type-definable, then $G^*/H$ is a compact, Hausdorff group. For details on the logic topology see e.g. \cite{KrNe} and \cite[Section 2]{Pi1}. The following basic remark will be useful later.

\begin{rem}\label{remark: denseness very basic}
Let $E$ be an bounded, $M$-invariant equivalence relation on $G^*$. Then $G/E$ is dense in $G^*/E$
\end{rem}

\begin{proof}
Consider any non-empty, open subset $U$ of $G^*/E$, and let $\pi \colon G^* \to G^*/E$ be the quotient map. Then $\pi^{-1}[U]$ is a non-empty, $\bigvee$-definable over $M$ subset of $G^*$, and as such it has a point in $G$. 
\end{proof}

\section{Universal $\Delta$-definable $G$-ambit}\label{section: definable ambit}

From now on, we fix a set $\Delta$ of formulas about which we assume that:

\begin{enumerate}
\item It consists of some formulas $\delta(x;y,z,\bar t)$ of the form $\psi(y\cdot x \cdot z; \bar t)$.
\item The formula $G(y \cdot x\cdot z)$ is in $\Delta$, where $G(x)$ is a formula over $\emptyset$ which defines $G$.
\end{enumerate}

This makes sure that any left or right translate of a $\Delta$-formula by an element of $G$ is still a $\Delta$-formula. Note that if all formulas as in (1) are included in $\Delta$, then being ``a subset of $G$ which is $\Delta$-definable over $M$'' is the same thing as being ``a subset of $G$ which is definable over $M$'', so we are in the definable context.

%From now on, we fix a set $\Delta$ of formulas about which we assume that it consists of some formulas $\delta(x;y,z,\bar t)$ of the form $\psi(y\cdot x \cdot z; \bar t)$; this makes sure that any left or right translate of a $\Delta$-formula by an element of $G$ is still a $\Delta$-formula. (To be precise, for any $a,b,c,\bar d$ from $\C$, if at least one of $a$, $b$ or $c$ is not in $G^*$, the formula $\delta(a,b,c,\bar d)$ does not hold.) Note that if all formulas of this form are included in $\Delta$, then being ``$\Delta$-definable over $M$'' is the same thing as being ``definable over $M$'', so we are in the definable context. To avoid dealing with some annoying details, we will be also always assuming that the formula $G(y \cdot x\cdot z)$ in in $\Delta$, where $G(x)$ is a formula over $\emptyset$ which defines $G$ (e.g. this guarantees that if $a \models p \in S_{G,\Delta}(M)$, then $a \in G^*$).

\begin{rem}\label{remark: existence of universal definable G-ambit}
There exists a unique (up to isomorphism) universal $\Delta$-definable $G$-ambit. 
\end{rem}

\begin{proof}
Uniqueness is clear. To show existence, consider a set $(G,X_i,x_i)_{i \in I}$ of all up to isomorphism $\Delta$-definable $G$-ambits (such a set exists, as there is a common bound on the cardinalities of all $G$-ambits). Now, let $X'=\prod_{i} X_i$, $x=(x_i)_i$, and let $X$ be the closure of the orbit of $x$ under the coordinatewise action of $G$. Then $(G,X')$ is $\Delta$-definable by Remark  \ref{definability of products}, and so $(G,X,x)$ is a $\Delta$-definable $G$-ambit. From the construction, we see that $(G,X,x)$ maps on any $\Delta$-definable $G$-ambit, i.e. it is universal.
\end{proof}

By the assumption on the form of the formulas in $\Delta$, we see that $G$ acts on $S_{G,\Delta}(M)$ by 
$$g \tp_{\Delta}(a/M)=\tp_{\Delta}(ga/M)$$
so that $(G,S_{G,\Delta}(M), \tp_{\Delta}(e/M))$ is a $G$-ambit.

Let $(G,{\mathcal U}, x)$ be the universal $\Delta$-definable $G$-ambit. Then, $f_x \colon G \to {\mathcal U}$ given by $f_x(g)=gx$ is $\Delta$-definable. Thus, by Fact \ref{prolongation}, it extends to the function $f^*_x \colon G^* \to {\mathcal U}$ which is $\Delta$-definable over $M$. Hence, by Remark \ref{remark: factorization}, there exists a continuous function $h\colon S_{G,\Delta}(M) \to {\mathcal U}$ such that $f^*_x=h \circ r$, where $r \colon G^* \to S_{G,\Delta}(M)$ is the obvious map (namely $a \mapsto \tp_\Delta(a/M)$). The function $h$ is uniquely determined by $f^*_x$, and we see that $h(\tp_{\Delta}(e/M))=x$. 

\begin{rem}\label{remark: homomorphism of ambits}
The above map $h$ yields a homomorphism from the $G$-ambit $(G,S_{G,\Delta}(M), \tp_{\Delta}(e/M))$ to $(G,{\mathcal U}, x)$.
\end{rem}

\begin{proof}
It remains to show that $h(gp)=gh(p)$ for any $g \in G$ and $p \in S_{G,\Delta}(M)$. By the explicit formula for $f^*_x$, we have
$$\{gh(p)\}=g\bigcap_{\varphi \in p} \cl(f_x[\varphi(M)])=\bigcap_{\varphi \in p} \cl(f_x[g\varphi(M)]) = \bigcap_{\psi \in gp} \cl(f_x[\psi(M)])=h(gp).$$
\end{proof}

If $(G,S_{G,\Delta}(M), \tp_{\Delta}(e/M))$ was a $\Delta$-definable $G$-ambit, we could proceed with the development of the theory exactly as in the well-understood externally definable context. However, in general, this ambit is not necessarily $\Delta$-definable (see Example \ref{example: E non-trivial}). Recall that if all types in $S_G(M)$ are definable, then $(G,S_{G}(M), \tp(e/M))$ is definable and coincides with $(G,S_{G,\ext}(M),\tp_{\ext}(e/M))$, and the categories of definable and externally definable $G$-flows coincide. Our goal is to start to develop a theory of definable $G$-flows without the definability of types assumption. 

Let $E_\Delta$ be the equivalence relation on $S_{G,\Delta}(M)$ given by
$$E_\Delta(p,q) \iff h(p)=h(q).$$
In the definable context (i.e. when  $\Delta$ consists of all formulas of the appropriate form), we will write $E$ instead of $E_\Delta$.

%By Remark \ref{remark: homomorphism of ambits}, $E_\Delta$ is closed and invariant under the action of $G$, and we get

\begin{cor}\label{cor: definable ambit is a quotient}
$(G,S_{G,\Delta}(M)/E_\Delta,\tp_{\Delta}(e/M)/E_\Delta)$ is the universal $\Delta$-definable $G$-ambit.
\end{cor}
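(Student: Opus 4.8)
The plan is to identify $S_{G,\Delta}(M)/E_\Delta$, equipped with the quotient topology and the induced $G$-action, as the universal object in the category of $\Delta$-definable $G$-ambits, using the homomorphism $h$ constructed above into the already-known universal object $(G,\mathcal U,x)$. The key structural fact is that $h$, being a continuous surjection from a compact space that factors through the fibers of $E_\Delta$, induces a continuous bijection $\bar h\colon S_{G,\Delta}(M)/E_\Delta \to h[S_{G,\Delta}(M)]$; and since the orbit $Gx$ is dense in $\mathcal U$ while $h$ is a $G$-homomorphism sending $\tp_\Delta(e/M)$ to $x$, the image $h[S_{G,\Delta}(M)]$ contains the dense orbit $Gx$ and is compact, hence equals all of $\mathcal U$. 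So $\bar h$ is a continuous bijection of compact Hausdorff spaces, i.e. a homeomorphism, and it is automatically a $G$-flow isomorphism by Remark \ref{remark: homomorphism of ambits}.

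First I would verify that the $G$-action descends to the quotient, so that $(G,S_{G,\Delta}(M)/E_\Delta,\tp_\Delta(e/M)/E_\Delta)$ is a genuine $G$-ambit: the relation $E_\Delta$ is $G$-invariant because $h$ is a $G$-homomorphism (if $h(p)=h(q)$ then $h(gp)=gh(p)=gh(q)=h(gq)$), and the distinguished point has dense orbit since its image $x$ does. I would also record that $E_\Delta$ is closed, hence the quotient is compact Hausdorff: closedness follows because $E_\Delta$ is the pullback of the diagonal of $\mathcal U$ under the continuous map $(p,q)\mapsto(h(p),h(q))$, and the diagonal is closed as $\mathcal U$ is Hausdorff. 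With these in place, the abstract universal property transfers across the isomorphism $\bar h$: for any $\Delta$-definable $G$-ambit $(G,Y,y_0)$ the unique ambit homomorphism $\mathcal U \to Y$ composed with $\bar h$ gives the required map out of $S_{G,\Delta}(M)/E_\Delta$.

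The main obstacle is confirming that $\bar h$ is genuinely a homeomorphism and not merely a continuous bijection, which is where I must know that $S_{G,\Delta}(M)/E_\Delta$ is Hausdorff. This is exactly the point the closedness of $E_\Delta$ handles: a continuous bijection from a compact space to a Hausdorff space is a homeomorphism, so once both source and target are seen to be compact Hausdorff the conclusion is immediate. The surjectivity of $h$ onto $\mathcal U$ (via the density of $Gx$ together with compactness of the image) is the other point deserving care, but it is short. Everything else is formal bookkeeping, and indeed the statement is labelled a corollary precisely because the substantive work — the construction of $h$ and the verification that it is an ambit homomorphism — was already carried out in Remarks \ref{remark: factorization} and \ref{remark: homomorphism of ambits}.
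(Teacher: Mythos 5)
Your proposal is correct and follows essentially the same route as the paper's proof: both pass the homomorphism $h$ through the quotient by $E_\Delta$ and identify the resulting map with an isomorphism onto the universal ambit $(G,\mathcal U,x)$, then transfer universality. You merely make explicit the details the paper leaves implicit (surjectivity of $h$ via density of $Gx$ plus compactness of the image, and the continuous-bijection-between-compact-Hausdorff-spaces argument), which is fine.
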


\begin{proof}
By Remark \ref{remark: homomorphism of ambits}, $E_\Delta$ is closed and invariant under the action of $G$. This implies that $(G,S_{G,\Delta}(M)/E_\Delta,\tp_{\Delta}(e/M)/E_\Delta)$ naturally becomes a  $G$-ambit. From the very definition of $E_\Delta$, we get that $h$ factors through the quotient map $S_{G,\Delta}(M) \to S_{G,\Delta}(M)/E_\Delta$ yielding an isomorphism from the $G$-ambit $(G,S_{G,\Delta}(M)/E_\Delta,\tp_{\Delta}(e/M)/E_\Delta)$ to $(G,{\mathcal U}, x)$. This completes the proof as $(G,{\mathcal U}, x)$ was chosen to be the universal $\Delta$-definable $G$-ambit.
\end{proof}

\begin{rem}\label{remark: uniqueness of E}
i) $E_\Delta$ is the unique equivalence relation $F$ on $S_{G,\Delta}(M)$ for which $(G,S_{G,\Delta}(M)/F,\tp_{\Delta}(e/M)/F)$, with the action of $G$ defined by $g(p/F):=(gp)/F$, is the universal $\Delta$-definable $G$-ambit.\\
ii) In the definable context (i.e. when  $\Delta$ consists of all formulas of the appropriate form), if all types in $S_G(M)$ are definable, then $E$ is trivial (i.e. it is the equality).
\end{rem}

\begin{proof}
i) Suppose $F_1$ and $F_2$ are two such relations. It is enough to show that $F_1 \subseteq F_2$. By the universality of $(G,S_{G,\Delta}(M)/F_1,\tp_{\Delta}(e/M)/F_1)$, there is $f\colon S_{G,\Delta}(M)/F_1 \to S_{G,\Delta}(M)/F_2$ which is a homomorphism of $G$-ambits. Let $f'\colon S_{G,\Delta}(M) \to S_{G,\Delta}(M)/F_2$ be the composition of $f$ with the quotient map $S_{G,\Delta}(M) \to S_{G,\Delta}(M)/F_1$. We see that $f'$ is a homomorphism from the $G$-ambit  $(G,S_{G,\Delta}(M), \tp_{\Delta}(e/M))$ to $(G,S_{G,\Delta}(M)/F_2, \tp_{\Delta}(e/M)/F_2)$. But there is only one such homomorphism, and it is given by $p \mapsto p/F_2$. Therefore, $f(p/F_1)=p/F_2$ for any $p$, hence $F_1 \subseteq F_2$.\\[1mm]
ii) By assumption, $(G,S_G(M), \tp(e/M))$ is a definable $G$-ambit. Hence, Corollary \ref{cor: definable ambit is a quotient} implies that $(G,S_G(M), \tp(e/M))$ is the universal definable $G$-ambit (literally, the quotient $S_G(M)/E$ is universal definable, but this implies the universal property of $S_G(M)$). And we finish using (i).
%ii) follows immediately from (i) and the fact that $(G,S_G(M), \tp(e/M))$ is a definable $G$-ambit (after noting that the last thing and Corollary \ref{cor: definable ambit is a quotient} imply that $(G,S_G(M), \tp(e/M))$ is the universal definable $G$-ambit).
\end{proof}

Define an equivalence relation $E_\Delta'$ on $G^*$ by
$$E_\Delta'(a,b) \iff E_\Delta(\tp_{\Delta}(a/M),\tp_{\Delta}(b/M)).$$
In the definable context, we will skip $\Delta$ and write $E'$.
We see that $E_\Delta'$ is a type-definable over $M$, bounded equivalence relation on $G^*$  which is coarser than the relation of having the same complete $\Delta$-type over $M$. We have a natural topological identification of $S_{G,\Delta}(M)/E_\Delta$ with $G^*/E_\Delta'$, which we will be using freely.

Our main goal is to give an explicit description of the relation $E_\Delta$, equivalently of $E_\Delta'$.

By Corollary \ref{cor: definable ambit is a quotient}, a standard argument (a sketch of which we give below for the reader's convenience) shows that there is a left continuous semigroup operation $*$ on $S_G(M)/E_\Delta$ which is given by
\begin{equation}
p/E_\Delta * q/E_\Delta = \lim_{g \rightarrow p/E_\Delta} g(q/E_\Delta),\label{eq: 0}
\end{equation}
equivalently,
$$a/E_\Delta' * b/E_\Delta'= \lim_{g \rightarrow a/E_\Delta'} g(a/E_\Delta').$$
(When we write ``$g$ tends to $p/E_\Delta$'', we mean ``$\tp_\Delta(g/M)/E_\Delta$ tends to $p/E_\Delta$ with $g$ ranging over $G$'', i.e. $\lim_{g \rightarrow p/E_\Delta} g(q/E_\Delta)=r/E_\Delta$ if and only if for every open neighborhood $U$ of $r/E_\Delta$ there is an open neighborhood $V$ of $p/E_\Delta$ such that for all $g \in G$ such that $\tp_\Delta(g/M)/E_\Delta \in V$ one has $g(q/E_\Delta) \in U$. And analogously in the equivalent definition.)

\begin{proof}[Sketch of the proof]
To simply notation, denote $(G,S_{G,\Delta}(M)/E_\Delta,\tp_{\Delta}(e/M)/E_\Delta)$ by $(G,\mathcal U,x_0)$. By the universality of $(G,{\mathcal U},x_0)$, for every $x \in {\mathcal U}$ there is a unique $f_x \colon {\mathcal U} \to {\mathcal U}$ which is a $G$-flow homomorphism mapping $x_0$ to $x$. For $p \in {\mathcal U}$ put 
$$p * x = f_x(p).$$
From the choice of $f_x$, the following properties follow immediately.
\begin{enumerate}
\item[(i)] $*$ is continuous in the left coordinate.
\item[(ii)] $*$ extends the action of $G$, i.e. $(gx_0) * x = gx$ for all $g \in G$ and $x \in {\mathcal U}$.
\item[(iii)] $\lim_{g \rightarrow p} gq = p * q$ for all $p,q \in {\mathcal U}$ (here $g \rightarrow p$ means that $gx_0$ tends to $p$).
\end{enumerate}
Finally, we leave as an exercise (using nets and limits) to check that $*$ is associative.
\end{proof}

Next, we give an explicit formula for $*$, which is similar to the one in the externally definable case.

\begin{prop}\label{proposition: formula for *}
For any $p,q \in S_{G,\Delta}(M)$, $p/E_\Delta * q/E_\Delta=\tp_\Delta(a\cdot b/M)/E_\Delta$, where $b \models q$ and $a$ realizes a $\Delta$-coheir extension of $p$ over $M,b$ (i.e. $\tp_\Delta(a/M,b)$ is finitely satisfiable in $M$).
\end{prop}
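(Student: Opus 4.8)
The plan is to evaluate the limit defining $*$ along one explicitly constructed net of group elements, the net being produced by the coheir (finite satisfiability) hypothesis, and then to read off its value from the left-continuity of $*$. Write $x_0 = \tp_\Delta(e/M)/E_\Delta$ and recall (from Remark~\ref{remark: factorization} together with the identification in Corollary~\ref{cor: definable ambit is a quotient}) that the extension $f^*_{x_0}$ of $f_{x_0}$ is simply $c \mapsto \tp_\Delta(c/M)/E_\Delta$ for $c \in G^*$. Fix $b \models q$ and a $\Delta$-coheir $a$ of $p$ over $Mb$ as in the statement, and put $p_0 = p/E_\Delta = \tp_\Delta(a/M)/E_\Delta$ and $q_0 = q/E_\Delta = \tp_\Delta(b/M)/E_\Delta$. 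By $G$-equivariance of the action on $S_{G,\Delta}(M)/E_\Delta$ one has $g q_0 = \tp_\Delta(gb/M)/E_\Delta$ for $g \in G$. The point is that it suffices to exhibit a \emph{single} net $(g_i)$ in $G$ with $g_i x_0 \to p_0$ and $\tp_\Delta(g_i b/M) \to \tp_\Delta(ab/M)$ in $S_{G,\Delta}(M)$: by property~(ii) of $*$ we have $(g_i x_0)*q_0 = g_i q_0$, so property~(i) (continuity of $y \mapsto y * q_0$) forces $g_i q_0 \to p_0 * q_0$, while continuity of the quotient map $S_{G,\Delta}(M) \to S_{G,\Delta}(M)/E_\Delta$ forces $g_i q_0 = \tp_\Delta(g_i b/M)/E_\Delta \to \tp_\Delta(ab/M)/E_\Delta$; uniqueness of limits in the compact Hausdorff space $S_{G,\Delta}(M)/E_\Delta$ then yields $p_0 * q_0 = \tp_\Delta(ab/M)/E_\Delta$, as desired.

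To build the net, the key structural observation is that right translation by $b$ turns a $\Delta$-formula over $M$ into a $\Delta$-formula over $Mb$. Indeed, by assumption~(1) every instance occurring in a $\Delta$-formula $\delta(x,m)$ over $M$ has the form $\psi(m_1 \cdot x \cdot m_2, \bar m_3)$, and substituting $xb$ for $x$ produces $\psi(m_1 \cdot x \cdot (b m_2), \bar m_3)$, again an instance of the same $\psi$ but now with parameter $b m_2 \in \dcl(Mb)$; this is preserved under Boolean combinations, so $\delta(xb,m)$ is a $\Delta$-formula over $Mb$. Consequently, for every $\Delta$-formula $\delta(x,m)$ over $M$ with $\models \delta(ab,m)$, the formula $\delta(xb,m)$ belongs to $\tp_\Delta(a/Mb)$, and of course so does every formula of $p = \tp_\Delta(a/M)$.

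I would then index the net by the directed set of finite subsets $F \subseteq \tp_\Delta(a/Mb)$ ordered by inclusion. Since $\tp_\Delta(a/Mb)$ is finitely satisfiable in $M$ (the coheir hypothesis), for each $F$ I may pick $g_F \in G$ realizing $F$. As $F$ exhausts $\tp_\Delta(a/Mb)$, every formula of $p$ is eventually satisfied by $g_F$, whence $\tp_\Delta(g_F/M) \to p$ and so $g_F x_0 \to p_0$; likewise every formula $\delta(xb,m)$ with $\models \delta(ab,m)$ is eventually satisfied by $g_F$, i.e.\ $\models \delta(g_F b, m)$ eventually, whence $\tp_\Delta(g_F b/M) \to \tp_\Delta(ab/M)$. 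This $(g_F)_F$ is exactly the net required above.

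The step I expect to be the main obstacle is securing both convergences from one net simultaneously: $g_F$ must converge to $p_0$ in the first coordinate while $\tp_\Delta(g_F b/M)$ is steered towards $\tp_\Delta(ab/M)$. Both demands are, however, finitary conditions expressed by $\Delta$-formulas over $Mb$ that hold of $a$, so the finite satisfiability of $\tp_\Delta(a/Mb)$ in $M$ settles them uniformly — this is precisely where the coheir hypothesis is used. The only other delicate point is the translation lemma, which is exactly where assumption~(1) on the shape of the formulas in $\Delta$ enters (and why it is imposed); the existence of a $\Delta$-coheir extension of $p$ over $Mb$ is routine, since any complete $\Delta$-type over the model $M$ is finitely satisfiable in $M$, and finitely-satisfiable-in-$M$ partial $\Delta$-types extend to complete $\Delta$-types over $Mb$ that remain finitely satisfiable in $M$.
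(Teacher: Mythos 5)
Your proof is correct and follows essentially the same strategy as the paper's: use finite satisfiability of $\tp_\Delta(a/M,b)$, applied to right translates $\chi(x\cdot b)$ of $\Delta$-formulas over $M$ (which remain $\Delta$-formulas over $M,b$ by the assumed shape of the formulas in $\Delta$), to produce a net $(g_F)$ in $G$ with $\tp_\Delta(g_F/M)\to p$ and $g_F(q/E_\Delta)\to \tp_\Delta(ab/M)/E_\Delta$, and then read off the value of $*$ from its defining limit. The only point where you diverge is that you establish the convergence $\tp_\Delta(g_F b/M)\to\tp_\Delta(ab/M)$ already upstairs in $S_{G,\Delta}(M)$ and push it through the quotient map, whereas the paper works directly in $S_{G,\Delta}(M)/E_\Delta$ and therefore must first exhibit the neighborhood basis $U_\varphi$ of the quotient topology and choose, for each $\varphi$, a formula $\psi\in\tp_\Delta(ab/M)$ with $(\exists y)(\psi(y)\wedge E_\Delta'(x,y))\vdash\varphi(x)$; your version renders that analysis unnecessary and is slightly more economical.
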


\begin{proof}
Note that a basis of open neighborhoods of $\tp_{\Delta}(ab/M)/E_\Delta \in S_{G,\Delta}(M)/E_\Delta$ consists of  the sets 

$$U_\varphi:=\{ r/E_\Delta : [r]_{E_\Delta} \subseteq [\varphi]\}$$ 
with $\varphi$ ranging over all $\Delta$-formulas over $M$ such that $\tp_{\Delta}(ab/M)/E_\Delta \in U_\varphi$, where $[r]_{E_\Delta}=\{ q \in S_{G,\Delta}(M): E_\Delta(r,q)\}$ and $[\varphi]=\{ q \in S_{G,\Delta}(M) : \varphi \in q\}$. Indeed, first of all, each set $U_{\varphi}$ is open in the quotient topology, because the preimage of the complement of $U_{\varphi}$ under the quotient map consists of all the types $r \in S_{G,\Delta}(M)$ for which there exists a type $r' \in S_{G,\Delta}(M)$ such that $E_\Delta(r,r')$ and $\neg \varphi(x) \in r'$ (and so we see that this preimage is closed as a projection of a closed set in a product of compact, Hausdorff spaces). Secondly, take any open neighborhood $U$ of $\tp_{\Delta}(ab/M)/E_\Delta$. Then $[\tp_{\Delta}(ab/M)]_{E_\Delta}$ is contained in the preimage $U'$ of $U$ under the quotient map. Since $[\tp_{\Delta}(ab/M)]_{E_\Delta}$ is closed and $U'$ is open, by the compactness of $S_{G,\Delta}(M)$, we can find a $\Delta$-formula (over $M$) $\varphi(x)$ such that $[\tp_{\Delta}(ab/M)]_{E_\Delta} \subseteq [\varphi(x)] \subseteq U'$, and so $U_\varphi$ is an open neighborhood of  $\tp_{\Delta}(ab/M)/E_\Delta$ contained in $U$.

Consider any $\varphi$ as above. Then $\tp_{\Delta}(ab/M) \vdash E_\Delta'(ab,x) \vdash \varphi(x)$. So there is $\psi(x) \in \tp_{\Delta}(ab/M)$ such that

\begin{equation}
(\exists y)(\psi(y) \wedge E_\Delta'(x,y)) \vdash \varphi(x).\label{eq: 1}
\end{equation}
Clearly $\models \psi(ab)$.

Consider any $\delta(w) \in p$. As $\models \delta(a) \wedge \psi(ab)$, we get $\delta(w) \wedge \psi(wb) \in \tp_\Delta(a/M,b)$. (Note that in order to have that $\psi(wb)$ is a $\Delta$-formula over $M,b$, we use our convention which allows Boolean combinations of instances of formulas from $\Delta$ with parameters from $\dcl(M,b)$; namely, we have to use parameters which are products of the form $b\cdot g \in \dcl(M,b)$ where $g \in G$.) By the assumption that $\tp_{\Delta}(a/M,b)$ is finitely satisfiable in $M$, there exists $g_{\delta,\varphi} \in G$ such that $\models \delta(g_{\delta,\varphi}) \wedge \psi(g_{\delta,\varphi} \cdot b)$. By (\ref{eq: 1}), we conclude that 
$E_\Delta'(g_{\delta,\varphi} \cdot b, x) \vdash \varphi(x)$, and so
\begin{equation}
g_{\delta,\varphi} (q/E_\Delta)=\tp_\Delta(g_{\delta,\varphi} \cdot b/M)/E_\Delta \in U_\varphi.\label{eq: 2} 
\end{equation}

The collection of all formulas $\varphi$ as above forms a directed set (with $\varphi_1\leq \varphi_2$ iff $\varphi_2 \vdash \varphi_1$) and similarly the collection of all $\delta$'s from $p$ forms a directed set; the product of these two directed sets is also a directed set with the product preorder, and the limits below are computed with respect to this product preorder. 

By (\ref{eq: 2}), we conclude that

\begin{equation}
\lim_{\delta,\varphi} g_{\delta,\varphi}  (q/E_\Delta)=\tp_{\Delta}(ab/M)/E_\Delta.\label{eq: 3}
\end{equation}

On the other hand,
$$\lim_{\delta,\varphi} \tp_{\Delta}(g_{\delta,\varphi}/M) = p,$$
so
$$\lim_{\delta,\varphi} \tp_{\Delta}(g_{\delta,\varphi}/M)/E_\Delta = p/E_\Delta,$$ 
which by virtue of (\ref{eq: 0}) implies that

\begin{equation}
\lim_{\delta,\varphi} g_{\delta,\varphi} (q/E_\Delta) = p/E_\Delta * q/E_\Delta. \label{eq: 4}
\end{equation}

From (\ref{eq: 3}) and (\ref{eq: 4}), we get $p/E_\Delta * q/E_\Delta = \tp_{\Delta}(ab/M)/E_\Delta$.
\end{proof}

Define  a relation ${F_0}_\Delta$ on $G^*$ as follows. ${F_0}_\Delta(\alpha,\beta)$ holds if there exist $a, b, a_1, b_1 \in G^*$ such that the following conditions hold:
\begin{enumerate}
\item $\alpha=a\cdot b$ and $\beta=a_1\cdot b_1$,
\item $\tp_{\Delta}(a/M)=\tp_{\Delta}(a_1/M)$ and $\tp_{\Delta}(b/M)=\tp_{\Delta}(b_1/M)$,
\item $\tp_{\Delta}(a/M,b)$ and $\tp_{\Delta}(a_1/M,b_1)$ are both finitely satisfiable in $M$. 
\end{enumerate}

The relation ${F_0}_\Delta$ is clearly reflexive and symmetric, but there is no obvious reason why it should be transitive. Let $F_\Delta$ be the transitive closure of ${F_0}_\Delta$. We see that ${F_0}_\Delta$ and $F_\Delta$ are $M$-invariant. Finally, let ${\bar F}_\Delta$ be the finest type-definable over $M$ equivalence relation on $G^*$ containing the relation $F_\Delta$. We easily see that all the relations ${F_0}_\Delta$, $F_\Delta$ and ${\bar F}_\Delta$ are coarser than the relation of having the same complete $\Delta$-type over $M$, so the last two are bounded equivalence relations.

As usual, in the definable context (i.e. when $\Delta$ consists of all formulas of the appropriate form), we skip the index $\Delta$ and write $F_0$, $F$ and $\bar F$.

\begin{rem}
The relations ${F_0}_\Delta$, $F_\Delta$ and ${\bar F}_\Delta$ are all invariant under the action of $G$, so $(G,G^*/{\bar F}_\Delta,e/{\bar F}_\Delta)$ is a $G$-ambit.
\end{rem}

\begin{proof}
First, we check that ${F_0}_\Delta$ is invariant under $G$. Consider any $g \in G$ and $\alpha,\beta \in G^*$ such that ${F_0}_\Delta(\alpha,\beta)$. Take $a,b,a_1,b_1$ from the definition of ${F_0}_\Delta$ witnessing that ${F_0}_\Delta(\alpha,\beta)$ holds. Since $g \in G \subseteq M$, one easily checks that $ga,b,ga_1,b_1$ witness that ${F_0}_\Delta(g\alpha,g\beta)$ holds.

Since ${F_0}_\Delta$ is invariant under $G$, so is $F_\Delta$. For any $g \in G$, the equivalence relation $g{\bar F}_\Delta$ is type-definable over $M$ and contains $gF_\Delta=F_\Delta$, so $\bar F_\Delta \subseteq  g \bar F_\Delta$. This implies that ${\bar F}_\Delta$ is invariant under $G$, which yields the natural action of $G$ on $G^*/{\bar F}_\Delta$ given by $g(h/{\bar F}_\Delta):= (gh)/{\bar F}_\Delta$. By the definition of the logic topology, we see that this is an action by homeomorphisms. Finally, the $G$-orbit of $e/{\bar F}_\Delta$ equals $G/{\bar F}_\Delta$, which is dense in the logic topology 
%because the preimage under the quotient map of any non-empty open subset of $G^*/{\bar F}_\Delta$  being $M$-invariant (since all classes of ${\bar F}_\Delta$ are) and $\bigvee$-definable must be a union of $M$-definable subsets of $G^*$, and so it has an element in $G$. 
by Remark \ref{remark: denseness very basic}.
\end{proof}

We will also need the following general remark.

\begin{rem}\label{remark: added because of the Referee}
If $D$ is a type-definable subset of $G^*$ which is a union of sets of realizations of complete $\Delta$-types over $M$, then $D$ is $\Delta$-type-definable over $M$.
\end{rem}

\begin{proof}
%Let $\pi \colon G^* \to S_G(M)$ be the map given by $\pi(a):= \tp(a/M)$ and $\eta \colon S_G(M) \to S_{G,\Delta}(M)$ by $\eta(\tp(a/M)) := \tp_{\Delta}(a/M)$. Since  $D$ is a union of sets of realizations of complete $\Delta$-types over $M$, it is $M$-invariant which together with type-definablility implies that $D$ is type-definable over $M$. Hence, $\pi[D]$ is closed. But the mapping $\eta$ is closed (as a continuous map between compact, Hausdorff spaces), and so $(\eta \circ \pi) [D]$ is closed. On the other hand, using the assumption that  $D$ is a union of sets of realizations of complete $\Delta$-types over $M$, we see that $D=(\eta \circ \pi)^{-1}[(\eta \circ \pi) [D]]$. So $D$ is $\Delta$-type-definable over $M$.
Let $\pi \colon G^* \to S_{G,\Delta}(M)$ be the map given by $\pi(a):= \tp_{\Delta}(a/M)$, and let $p(x)$ be a partial type defining $D$. Then $\pi[D]$ is the subset of $S_{G,\Delta}(M)$ consisting off all types consistent with $p(x)$, so $\pi[D]$ is closed, i.e. it is the set of all types in $S_{G,\Delta}(M)$ extending some partial $\Delta$-type $p'$ over $M$. Since $D$ is a union of sets of realizations of complete $\Delta$-types over $M$, we see that $D=\pi^{-1}[\pi[D]]$, so $D=p'(G^*)$ is $\Delta$-type-definable over $M$.
\end{proof}

\begin{thm}\label{theorem: main theorem}
${\bar F}_\Delta=E_\Delta'$, so $(G,G^*/{\bar F}_\Delta, e/{\bar F}_\Delta)$ is the universal $\Delta$-definable $G$-ambit.
\end{thm}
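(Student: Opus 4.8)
The plan is to prove the two inclusions $\bar F_\Delta \subseteq E_\Delta'$ and $E_\Delta' \subseteq \bar F_\Delta$ separately, the second being where all the work lies. Once the equality is established, the assertion that $(G,G^*/\bar F_\Delta,e/\bar F_\Delta)$ is the universal $\Delta$-definable $G$-ambit is immediate from Corollary \ref{cor: definable ambit is a quotient} together with the topological identification of $S_{G,\Delta}(M)/E_\Delta$ with $G^*/E_\Delta'$.

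For $\bar F_\Delta \subseteq E_\Delta'$ it suffices, since $E_\Delta'$ is an equivalence relation type-definable over $M$ and $\bar F_\Delta$ is the finest such relation containing $F_\Delta$, to check ${F_0}_\Delta \subseteq E_\Delta'$. So suppose ${F_0}_\Delta(\alpha,\beta)$ with witnesses $a,b,a_1,b_1$, and put $p := \tp_\Delta(a/M) = \tp_\Delta(a_1/M)$ and $q := \tp_\Delta(b/M) = \tp_\Delta(b_1/M)$. Condition (3) in the definition of ${F_0}_\Delta$ says precisely that $a$ realizes a $\Delta$-coheir extension of $p$ over $M,b$ and $a_1$ one over $M,b_1$, so applying Proposition \ref{proposition: formula for *} twice gives $\tp_\Delta(\alpha/M)/E_\Delta = p/E_\Delta * q/E_\Delta = \tp_\Delta(\beta/M)/E_\Delta$, i.e.\ $E_\Delta'(\alpha,\beta)$. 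Hence $F_\Delta \subseteq E_\Delta'$ and therefore $\bar F_\Delta \subseteq E_\Delta'$.

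For the reverse inclusion I would show that $(G,G^*/\bar F_\Delta,e/\bar F_\Delta)$ is a $\Delta$-definable $G$-ambit, which is enough: since $\bar F_\Delta$ is coarser than equality of complete $\Delta$-types over $M$, the quotient $G^*/\bar F_\Delta$ has the form $S_{G,\Delta}(M)/F$ for a unique $G$-invariant $F$; the inclusion already proved yields a homomorphism of $G$-ambits $\pi\colon G^*/\bar F_\Delta \to G^*/E_\Delta'$ (continuity of $\pi$ is immediate from the definition of the logic topology), and composing $\pi$ with the universal homomorphisms out of $G^*/E_\Delta'$ shows that $G^*/\bar F_\Delta$ is itself universal among $\Delta$-definable $G$-ambits; Remark \ref{remark: uniqueness of E}(i) then forces $F = E_\Delta$, that is $\bar F_\Delta = E_\Delta'$. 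To prove $\Delta$-definability I must check, for each $\xi = \gamma/\bar F_\Delta$, that $f_\xi\colon G \to G^*/\bar F_\Delta$, $g \mapsto g\gamma/\bar F_\Delta$, is $\Delta$-definable. The idea is to exhibit its prolongation explicitly as $g_\xi(a) := a'\gamma/\bar F_\Delta$, where $a'$ realizes a $\Delta$-coheir of $\tp_\Delta(a/M)$ over $M,\gamma$. The definition of ${F_0}_\Delta$ is exactly what makes this well defined: two such coheir realizations $a',a''$ satisfy ${F_0}_\Delta(a'\gamma,a''\gamma)$, so $a'\gamma$ and $a''\gamma$ agree modulo $\bar F_\Delta$; moreover $g_\xi$ depends only on $\tp_\Delta(a/M)$ and restricts to $f_\xi$ on $G$.

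It then remains to verify that $g_\xi$ is $\Delta$-definable over $M$, i.e.\ that $g_\xi^{-1}[C]$ is $\Delta$-type-definable over $M$ for every closed $C \subseteq G^*/\bar F_\Delta$; by Lemma \ref{prolongation}(2)(ii) this gives that $f_\xi = g_\xi\restr G$ is $\Delta$-definable. Writing $D$ for the preimage of $C$ in $G^*$ (type-definable over $M$ and saturated under $\bar F_\Delta$, hence $\Delta$-type-definable over $M$ by Remark \ref{remark: added because of the Referee}), the set $g_\xi^{-1}[C] = \{a : a'\gamma \in D\}$ is invariant under equality of complete $\Delta$-types over $M$, so by Remark \ref{remark: added because of the Referee} it will be $\Delta$-type-definable over $M$ as soon as we know it is type-definable. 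This last point is the main obstacle, and it is where finite satisfiability enters: one checks that for $a \models p$ one has $g_\xi(a) = \lim_{g \to p} g\gamma/\bar F_\Delta$ (the coheir $a'$ arises as a limit of elements of $G$ along $\tp_\Delta(\,\cdot\,/M) \to p$, and the value modulo $\bar F_\Delta$ is independent of the chosen coheir), so that the induced map $S_{G,\Delta}(M) \to G^*/\bar F_\Delta$ is continuous by a standard diagonal argument on nets, whence its preimages of closed sets are closed and $g_\xi^{-1}[C]$ is type-definable. I expect the bookkeeping in this continuity argument --- keeping the coheir, finite satisfiability, and the two nested logic topologies straight --- to be the only genuinely delicate part; everything else is formal.
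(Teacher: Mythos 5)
Your overall architecture coincides with the paper's: the inclusion $\bar F_\Delta \subseteq E_\Delta'$ via Proposition \ref{proposition: formula for *}, the reduction of $E_\Delta' \subseteq \bar F_\Delta$ to showing that $(G,G^*/\bar F_\Delta,e/\bar F_\Delta)$ is a $\Delta$-definable $G$-ambit, the definition of the prolongation $g_\xi(a) = a'\gamma/\bar F_\Delta$ via a $\Delta$-coheir realization $a'$, its well-definedness from the definition of ${F_0}_\Delta$, and the use of Remark \ref{remark: added because of the Referee} and Lemma \ref{prolongation}(2)(ii) are all exactly the paper's steps. The first inclusion is complete and correct as you wrote it.

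The gap is in the last step, the one you yourself flag as ``the only genuinely delicate part'': showing that $g_\xi^{-1}[C]$ is type-definable. Your proposed route is to prove that the induced map $S_{G,\Delta}(M)\to G^*/\bar F_\Delta$ is continuous via $g_\xi(a)=\lim_{g\to p} g\gamma/\bar F_\Delta$ and a diagonal argument on nets. For the diagonal argument to give continuity you need that \emph{every} net $(g_i)$ in $G$ with $\tp_\Delta(g_i/M)\to p$ satisfies $g_i\gamma/\bar F_\Delta \to g_\xi(p)$; but the coheir/finite-satisfiability mechanism (as in the proof of Proposition \ref{proposition: formula for *}) only produces \emph{one} particular net $(g_{\delta,\varphi})$ with this property --- finite satisfiability yields, for each $\delta\in p$ and each relevant $\varphi$, \emph{some} $g$ with $\models\delta(g)\wedge\psi(g\gamma)$, and says nothing about arbitrary $g\models\delta$. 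So the convergence claim you need is not established by the tools you invoke (it is true a posteriori, since it amounts to left-continuity of $*$, but that is only available once the theorem is proved). The fix is to drop the topological detour and compute directly, as the paper does: $g_\xi^{-1}[C]$ is the set of $\alpha$ such that there exists $\alpha'$ with $\tp_\Delta(\alpha'/M)=\tp_\Delta(\alpha/M)$, with $\tp_\Delta(\alpha'/M,\gamma)$ finitely satisfiable in $M$ (a partial type over $M\cup\{\gamma\}$, namely $\{\neg\psi(x) : \psi$ a $\Delta$-formula over $M,\gamma$ with no realization in $G\}$), and with $\alpha'\gamma\in D$. Each condition is type-definable over $M\cup\{\gamma\}$, and an existential projection of a type-definable set is type-definable by compactness; then your appeal to Remark \ref{remark: added because of the Referee} finishes the argument exactly as you intended.
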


\begin{proof}
First, we prove that ${\bar F}_\Delta \subseteq E_\Delta'$.  For this, it is enough to show that ${F_0}_\Delta \subseteq E_\Delta'$, because then clearly $F_\Delta \subseteq E_\Delta'$ which implies that ${\bar F}_\Delta \subseteq E_\Delta'$ by the definition of ${\bar F}_\Delta$ and the fact that $E_\Delta'$ is type-definable over $M$.

So, consider any $\alpha, \beta \in G^*$ such that ${F_0}_\Delta(\alpha,\beta)$. Then we have $a,b,a_1,b_1$ satisfying (1), (2) and (3) from the definition of ${F_0}_\Delta$. Let $p=\tp_{\Delta}(a/M)$ and $q=\tp_{\Delta}(b/M)$. By Proposition \ref{proposition: formula for *}, we get
$ \tp_{\Delta}(\alpha/M)/E_\Delta=\tp_\Delta(a\cdot b/M)/E_\Delta=p/E_\Delta*q/E_\Delta=\tp_{\Delta}(a_1\cdot b_1/M)/E_\Delta=\tp_{\Delta}(\beta/M)/E_\Delta.$
Hence, $E_\Delta'(\alpha,\beta)$.

Now, we will prove that $E_\Delta'\subseteq {\bar F}_\Delta$. Note that it is enough to show that $(G,G^*/{\bar F}_\Delta,e/{\bar F}_\Delta)$ is a $\Delta$-definable $G$-ambit. Indeed, if we know that $(G,G^*/{\bar F}_\Delta,e/{\bar F}_\Delta)$ is $\Delta$-definable, then, by the universality of $(G,G^*/E_\Delta',e/E_\Delta')$ (see Corollary \ref{cor: definable ambit is a quotient}), 
%there exists a $G$-flow homomorphism $\sigma_1\colon G^*/E_\Delta' \to G^*/{\bar F}_\Delta$ commuting with the natural maps $\tau_1 \colon G \to G^*/E_\Delta'$ and $\tau_2 \colon G \to G^*/{\bar F}_\Delta$. On the other hand, by the already proven fact that ${\bar F}_\Delta \subseteq E_\Delta'$, there is a $G$-flow homomorphisms $\sigma_2 \colon G^*/{\bar F}_\Delta \to G^*/E_\Delta'$ (given by $\sigma_2(a/{\bar F}_\Delta)=a/E_\Delta'$) also commuting with $\tau_1$ and $\tau_2$. Therefore, $(\sigma_1 \circ \sigma_2)|_{G/{\bar F}_\Delta} = \id_{G/{\bar F}_\Delta}$, hence $\sigma_1 \circ \sigma_2 =\id_{G^*/{\bar F}_\Delta}$, so $\sigma_2$ is injective which implies that ${\bar F}_\Delta=E_\Delta'$.
there exists a $G$-flow homomorphism $\sigma_1\colon G^*/E_\Delta' \to G^*/{\bar F}_\Delta$ such that $\sigma_1(g/E_\Delta')=g/{\bar F}_\Delta$ for all $g \in G$. On the other hand, by the already proven fact that ${\bar F}_\Delta \subseteq E_\Delta'$, there is a $G$-flow homomorphisms $\sigma_2 \colon G^*/{\bar F}_\Delta \to G^*/E_\Delta'$ given by $\sigma_2(a/{\bar F}_\Delta):=a/E_\Delta'$. 
%which clearly satisfies $\sigma_2(g/{\bar F}_\Delta)=g/E_\Delta'$. 
Therefore, $(\sigma_1 \circ \sigma_2)|_{G/{\bar F}_\Delta} = \id_{G/{\bar F}_\Delta}$, hence $\sigma_1 \circ \sigma_2 =\id_{G^*/{\bar F}_\Delta}$, so $\sigma_2$ is injective which implies that ${\bar F}_\Delta=E_\Delta'$.

So, our goal is to prove that  $(G,G^*/{\bar F}_\Delta,e/{\bar F}_\Delta)$ is $\Delta$-definable. This means that for any $a \in G^*$ the function $f_a \colon G \to G^*/{\bar F}_\Delta$ given by $f_a(g)=(g \cdot a)/{\bar F}_\Delta$ is $\Delta$-definable. Fix any $a \in G^*$.

Define ${\bar f}_a \colon G^* \to G^*/{\bar F}_\Delta$ by ${\bar f}_a(\alpha)=(\alpha'\cdot a)/{\bar F}_\Delta$ for any (equivalently, some) $\alpha' \in G^*$ such that $\tp_{\Delta}(\alpha'/M)=\tp_{\Delta}(\alpha/M)$ and $\tp_{\Delta}(\alpha'/M,a)$ is finitely satisfiable in $M$. By the definition of ${\bar F}_\Delta$, this function is well-defined. We see that ${\bar f}_a$ extends $f_a$. Hence, it remains to show that ${\bar f}_a$ is $\Delta$-definable over $M$.

So, consider any $D \subseteq G^*/{\bar F}_\Delta$ which is closed. Let $\pi \colon G^* \to G^*/{\bar F}_\Delta$ be the quotient map. Then $\pi^{-1}[D]$ is type-definable over $M$ by a partial type $p(x)$.
We see that ${\bar f}_a^{-1}[D]$ is the set of all $\alpha \in G^*$ for which there exists $\alpha' \in G^*$ such that 
$$\tp_{\Delta}(\alpha'/M)=\tp_\Delta(\alpha/M) \; \mbox{and}\; \tp_\Delta(\alpha'/M,a)\; \mbox{finitely satisfiable in}\; M\; \mbox{and}\; \models p(\alpha'a).$$
%
%Hence,  ${\bar f}_a^{-1}[D]$ is clearly type-definable over $M \cup\{a\}$ and invariant over $M$, so it is type-definable over $M$. But it is also a union of sets of realizations of complete $\Delta$-types over $M$. Therefore, it corresponds to a closed subset of $S_{G,\Delta}(M)$, so it is $\Delta$-type-definable over $M$.
Hence,  ${\bar f}_a^{-1}[D]$ is type-definable (over $M \cup\{a\}$) and it is also a union of sets of realizations of complete $\Delta$-types over $M$. Therefore, it is $\Delta$-type-definable over $M$ by Remark \ref{remark: added because of the Referee}.
\end{proof}

\begin{ques}
1) Is ${F_0}_\Delta$ type-definable?\\
2) Is $F_\Delta$ generated by ${F_0}_\Delta$ in finitely many steps?\\
3) Is $\bar{F}_\Delta$ equal to $F_\Delta$?
\end{ques}

The answers are probably negative in general and the problem is to find appropriate counter-examples. It would be also interesting to understand when the answers are positive. Note that they are trivially positive when we work in the definable context and all types in $S_G(M)$ are definable, as then $F_0(\alpha,\beta) \iff \alpha \equiv_M \beta$.

\section{Connected components and $\Delta$-definable Bohr compactification}\label{section: connected components}

A very important aspect of the topological-dynamic approach to model theory have been connections of some topological-dynamic invariants (e.g. the Ellis group) with model-theoretic invariants (such as quotients by various connected components of groups). Here, we try to say something about such connections in our $\Delta$-definable context. In particular, we introduce two $\Delta$-definable connected components and relate one of them to the Ellis group of the universal $\Delta$-definable ambit. We also give a desciption of the $\Delta$-definable Bohr compactification.

Recall that:
\begin{itemize}
\item ${G^*}^{00}_M$ is the smallest $M$-type-definable subgroup of $G^*$ of bounded index,
\item ${G^*}^{000}_M$ is the smallest $M$-invariant subgroup of $G^*$ of bounded index.
\end{itemize}

Take $\Delta$ as in the previous section. We extend the first definition to the local context in the following way.

%\begin{itemize}
%\item ${G^*}^{00}_{\Delta,M}$ is the smallest $\Delta$-type-definable over $M$ subgroup of $G^*$ of bounded index.
%\item ${G^*}^{000}_{\Delta,M}$ is the smallest invariant over $M$ subgroup of $G^*$ of bounded index which is a union of sets of realizations of complete $\Delta$-types over $M$.
%\end{itemize} 

\begin{dfn}
${G^*}^{00}_{\Delta,M}$ is the smallest $\Delta$-type-definable over $M$ subgroup of $G^*$ of bounded index.
\end{dfn}

Note that $G^*$ is $\Delta$-definable over $M$ (even over $\emptyset$) by the formula defining $G$, so ${G^*}^{00}_{\Delta,M}$ exists as the intersection of all $\Delta$-type-definable over $M$ subgroups of $G^*$ of bounded index.

Another definition of ${G^*}^{00}_\Delta$ has been proposed by E. Hrushovski in his lecture notes on approximate equivalence relations, but the above definition is more appropriate in our current situation of $\Delta$-definable topological dynamics. Later, we will also propose a local version of ${G^*}^{000}_M$, which leads to some questions.

It is well-known that ${G^*}^{000}_{M} \leq {G^*}^{00}_M$ are both normal subgroups of $G^*$ (e.g. see \cite[Lemma 2.2]{Gi}). 
%Here, we extend it to the local context.
%An elaboration of the argument from \cite{Gi} yields

\begin{prop}\label{proposition: G00 is normal}
${G^*}^{00}_{\Delta,M}$ is a normal subgroup of $G^*$.
\end{prop}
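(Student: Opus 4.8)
The plan is to reduce normality in $G^*$ to normality inside the compact group $\bar G := G^*/{G^*}^{00}_M$, exploiting the classical fact recalled above (from \cite[Lemma 2.2]{Gi}) that ${G^*}^{00}_M$ is a \emph{normal}, type-definable over $M$ subgroup, so that $\bar G$ is a compact Hausdorff topological group under the logic topology. First I would observe that ${G^*}^{00}_M \subseteq {G^*}^{00}_{\Delta,M}$: since ${G^*}^{00}_{\Delta,M}$ is in particular type-definable over $M$ of bounded index, minimality of ${G^*}^{00}_M$ gives the inclusion. Hence $H := {G^*}^{00}_{\Delta,M}$ is a union of cosets of $\ker(\pi)$, where $\pi\colon G^* \to \bar G$ is the quotient map, and $\bar H := \pi[H]$ is a subgroup of $\bar G$ with $\pi^{-1}[\bar H] = H$. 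Because $H$ is type-definable over $M$, $\bar H$ is closed in the logic topology. It then suffices to prove that $\bar H \trianglelefteq \bar G$, since the preimage of a normal subgroup under a homomorphism is normal.

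The computation at the heart of the argument is that $H$ is normalized by $G$ (the $M$-points). For $g \in G$ and a $\Delta$-formula $\rho(x)$ over $M$ cutting out (part of) $H$, i.e.\ a Boolean combination of instances $\psi(y_0 \cdot x \cdot z_0, \bar t_0)$ with parameters in $M$, the formula $\rho(g^{-1} x g)$ is again a $\Delta$-formula over $M$: each instance becomes $\psi((y_0 g^{-1}) \cdot x \cdot (g z_0), \bar t_0)$ with parameters $y_0 g^{-1}, g z_0 \in M$. This is exactly where assumptions (1)--(2) on $\Delta$ are used, guaranteeing that left/right translation by elements of $G$ preserves $\Delta$-definability over $M$. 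Consequently $g H g^{-1}$ is $\Delta$-type-definable over $M$ of bounded index, so minimality of ${G^*}^{00}_{\Delta,M}$ yields $H \subseteq gHg^{-1}$; applying the same to $g^{-1}$ gives $gHg^{-1} = H$. Thus every $g \in G$ normalizes $H$, and therefore $\pi(g)$ normalizes $\bar H$ for every $g \in G$.

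To finish, I would combine two facts about $\bar G$. First, the image $\pi[G]$ is dense in $\bar G$ by Remark \ref{remark: denseness very basic}, applied to the (bounded, $M$-invariant) coset equivalence relation of ${G^*}^{00}_M$. Second, the normalizer $N_{\bar G}(\bar H)$ of the closed subgroup $\bar H$ is closed: in a Hausdorff topological group it is an intersection of the closed sets $\{x : x h x^{-1} \in \bar H\}$ (and their inverse-image analogues) as $h$ ranges over $\bar H$, these being preimages of the closed set $\bar H$ under the continuous conjugation maps. Since $N_{\bar G}(\bar H)$ is closed and contains the dense set $\pi[G]$ by the previous paragraph, it must equal all of $\bar G$. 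Hence $\bar H \trianglelefteq \bar G$, and so $H = \pi^{-1}[\bar H]$ is normal in $G^*$.

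The step I expect to be the main obstacle is precisely the passage from $G$-normality to $G^*$-normality. The naive attempt — conjugate $H$ by an arbitrary $g \in G^*$ and invoke minimality — breaks down, because for $g \notin G$ the conjugate $\rho(g^{-1} x g)$ is only a $\Delta$-formula over $Mg$ (its parameters $y_0 g^{-1}, g z_0$ lie in $\dcl(Mg) \setminus M$), so $gHg^{-1}$ need not be $\Delta$-type-definable over $M$ and minimality does not apply. The device that bridges this gap is to transport everything into the compact quotient $\bar G$, where density of $\pi[G]$ together with closedness of the normalizer forces normality; this is what makes the statement ``not completely obvious'', in contrast with the global case, where one can instead note directly that the normal core $\bigcap_{g \in G^*} g\,{G^*}^{00}_M\, g^{-1}$ is type-definable over $M$ of bounded index and hence equals ${G^*}^{00}_M$.
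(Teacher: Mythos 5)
Your proof is correct, but it takes a genuinely different route from the paper's. Both arguments begin identically: closure of $\Delta$ under left/right translation by elements of $G$ shows that $G$ normalizes $H:={G^*}^{00}_{\Delta,M}$, and both identify the same obstruction, namely that conjugating by $g \in G^*\setminus G$ produces $\Delta$-formulas over $Mg$ rather than over $M$, so minimality cannot be invoked directly. From there the paper stays at the level of formulas: it writes the normal core $\bigcap_{g\in G^*}H^g$ as a bounded intersection over coset representatives, notes that this core is consequently type-definable and $M$-invariant, hence cut out by a family $\B$ of formulas over $M$, and then for each $\varphi\in\B$ uses compactness together with $M\prec\C$ to replace the finitely many conjugating parameters from $G^*$ by elements of $G$, landing back inside the $G$-conjugation-closed family $\A$ defining $H$. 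You instead transport the problem to the compact Hausdorff group $G^*/{G^*}^{00}_M$ (via ${G^*}^{00}_M\le H$ and the known normality of ${G^*}^{00}_M$), where $\pi[H]$ is a closed subgroup normalized by the dense subgroup $\pi[G]$, and you conclude from the closedness of the normalizer of a closed subgroup. This is softer and arguably more conceptual, but it imports two facts the paper's argument does not need: the normality of ${G^*}^{00}_M$ itself, and the fact that the logic topology makes $G^*/{G^*}^{00}_M$ a genuine topological group (joint continuity of multiplication is what makes each conjugation map $x\mapsto xhx^{-1}$ continuous, and hence the normalizer closed); both are standard and are quoted in the paper's preliminaries, so the argument is sound. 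The paper's version, by contrast, is self-contained modulo compactness and elementarity, and its mechanism (replacing monster-model conjugators by $M$-points via $M\prec\C$) is the same trick it reuses in Remark \ref{rem:existence of G00}.
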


\begin{proof}

Note that ${G^*}^{00}_{\Delta,M}$ is normalized by $G$, which follows from the fact that the conjugate of a $\Delta$-formula over $M$ by any element of $G$ remains a $\Delta$-formula over $M$. Therefore, 
$${G^*}^{00}_{\Delta,M}=\bigcap_{\varphi(x) \in \A} \varphi(G^*)$$
for some family $\A$ of formulas over $M$ which is closed under conjugations by elements of $G$.

Let $\{ g _i: i \in I\}$ be a bounded set of representatives of right cosets of ${G^*}^{00}_{\Delta,M}$ in $G^*$. Then clearly
$$\bigcap_{g \in G^*} ({G^*}^{00}_{\Delta,M})^g= \bigcap_{i \in I} ({G^*}^{00}_{\Delta,M})^{g_i}.$$
So this intersection is invariant over $M$ and also type-definable, and hence it is type-definable over $M$. Thus,
$$\bigcap_{g \in G^*} ({G^*}^{00}_{\Delta,M})^g = \bigcap_{\varphi(x) \in \B} \varphi(G^*)$$
for some family $\B$ of formulas over $M$. 
%which is closed under (finite) conjunctions.

It is enough to show that ${G^*}^{00}_{\Delta,M} \subseteq \bigcap_{g \in G^*} ({G^*}^{00}_{\Delta,M})^g$. So take any $\varphi(x) \in \B$. Then there are $\varphi_1(x),\dots,\varphi_n(x) \in \A$ and $a_1,\dots, a_n \in \{g_i: i \in I\}$ such that $\varphi_1(G^*)^{a_1} \cap \dots \cap \varphi_n(G^*)^{a_n} \subseteq \varphi(G^*)$. Since $M \prec \C$, there are $h_1,\dots,h_n \in G$ for which $\varphi_1(G^*)^{h_1} \cap \dots \cap \varphi_n(G^*)^{h_n} \subseteq \varphi(G^*)$. Since $\A$ is closed under conjugations by elements of $G$, we get ${G^*}^{00}_{\Delta,M} \subseteq  \varphi_1(G^*)^{h_1} \cap \dots \cap \varphi_n(G^*)^{h_n}$. We conclude that ${G^*}^{00}_{\Delta,M} \subseteq \varphi(G^*)$, and the proof is complete.
\end{proof}

Let $\mu$ be the subgroup of $G^*$ generated by all elements of the form $a^{-1}b$ for $\tp_{\Delta}(a/M)=\tp_{\Delta}(b/M) \in S_{G,\Delta}(M)$. Since there are only boundedly many complete $\Delta$-types over $M$, we get

\begin{rem}\label{rem: mu of bounded index}
$\mu$ has bounded index in $G^*$.
\end{rem}

\begin{lem}\label{lemma: mu is contained}
$\mu \leq  {G^*}^{00}_{\Delta,M}$.
\end{lem}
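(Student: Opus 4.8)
The plan is to prove the equivalent statement that every generator $a^{-1}b$ of $\mu$ (where $\tp_\Delta(a/M)=\tp_\Delta(b/M)$) lies in \emph{every} $\Delta$-type-definable over $M$ subgroup $L\le G^*$ of bounded index. Since ${G^*}^{00}_{\Delta,M}$ is by definition the intersection of all such $L$, this immediately yields $a^{-1}b\in{G^*}^{00}_{\Delta,M}$ and hence $\mu\le{G^*}^{00}_{\Delta,M}$. So I would fix one such $L$ together with $a,b$ satisfying $\tp_\Delta(a/M)=\tp_\Delta(b/M)$, and reduce everything to membership in $L$.

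Next I would write $L=\bigcap_{\varphi\in\Psi}\varphi(G^*)$, where $\Psi$ is the family of all $\Delta$-formulas $\varphi$ over $M$ with $L\subseteq\varphi(G^*)$; this family is downward directed since it is closed under conjunction. It then suffices to show $\models\varphi(a^{-1}b)$ for each $\varphi\in\Psi$. Fixing $\varphi$, I would use that $L$ is a subgroup, so $L^{-1}L=L\subseteq\varphi(G^*)$, and a routine compactness argument (applied to the defining families, using directedness of $\Psi$) produces a single $\psi\in\Psi$ with $\psi(G^*)^{-1}\psi(G^*)\subseteq\varphi(G^*)$. It is worth noting that only this set-theoretic inclusion is needed, so I never have to decide whether $\psi(x^{-1})$ is a $\Delta$-formula — which matters, since the prescribed form of the formulas in $\Delta$ is not preserved under inversion.

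The geometric heart of the argument is genericity. Because $\psi(G^*)\supseteq L$ and $L$ has bounded index, finitely many left translates of $\psi(G^*)$ cover $G^*$; moreover, as $\psi$ is over $M$ and $M\prec\C$, the translating elements can be taken in $G$, so that $G^*=\bigcup_{i} g_i\,\psi(G^*)$ for some $g_1,\dots,g_n\in G$. (This is the standard fact that a relatively definable superset of a bounded-index type-definable subgroup is generic; I would either cite it or reprove it by a saturation argument, constructing an unboundedly long sequence of elements pairwise not $\psi$-related and thereby contradicting the bounded number of cosets of $L$.) Picking $i$ with $a\in g_i\psi(G^*)$, i.e. $\models\psi(g_i^{-1}a)$, I would now exploit the standing assumptions on $\Delta$: the formula $\psi(g_i^{-1}x)$ is again a $\Delta$-formula over $M$, being a left translate by $g_i^{-1}\in G$, so $\tp_\Delta(a/M)=\tp_\Delta(b/M)$ forces $\models\psi(g_i^{-1}b)$ as well. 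Hence $g_i^{-1}a,\,g_i^{-1}b\in\psi(G^*)$, and therefore $a^{-1}b=(g_i^{-1}a)^{-1}(g_i^{-1}b)\in\psi(G^*)^{-1}\psi(G^*)\subseteq\varphi(G^*)$, giving $\models\varphi(a^{-1}b)$ as desired.

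The main obstacle I anticipate is precisely the genericity step, together with arranging the covering translates to lie in $G$ rather than merely in $G^*$; both rely on the saturation and elementarity of the monster model, and this is the one place where boundedness of the index (not just the group structure) is genuinely used. The only other delicate point is the bookkeeping about $\Delta$-formulas: left translations by elements of $G$ preserve the property of being a $\Delta$-formula over $M$ (by the standing conditions (1)--(2) on $\Delta$), whereas inversion need not, so the whole argument is deliberately set up to use only left $G$-translates and the inclusion $\psi(G^*)^{-1}\psi(G^*)\subseteq\varphi(G^*)$.
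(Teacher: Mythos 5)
Your proposal is correct and follows essentially the same route as the paper: write the relevant $\Delta$-type-definable bounded-index subgroup as a directed intersection of $\Delta$-definable sets, use boundedness of the index to cover $G^*$ by finitely many left $G$-translates of such a set, observe that left translation by elements of $G$ preserves $\Delta$-formulas over $M$ so that $\tp_\Delta(a/M)=\tp_\Delta(b/M)$ forces $a$ and $b$ into the same translate, and conclude $a^{-1}b\in\psi(G^*)^{-1}\psi(G^*)\subseteq\varphi(G^*)$. The only (harmless) divergence is that the paper arranges the defining sets to be symmetric with $\varphi_j(G^*)\cdot\varphi_j(G^*)\subseteq\varphi_i(G^*)$, whereas you deliberately avoid symmetrization and extract $\psi(G^*)^{-1}\psi(G^*)\subseteq\varphi(G^*)$ directly by compactness from $L^{-1}L=L$ — a sound and arguably cleaner handling of the fact that inversion need not preserve $\Delta$-formulas.
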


\begin{proof}
By compactness and the fact that ${G^*}^{00}_{\Delta,M}$ is a group $\Delta$-type-definable over $M$, we can present ${G^*}^{00}_{\Delta,M}$ as the intersection of some family $\{ \varphi_i(G^*)\}_{i \in I}$ of sets $\Delta$-definable over $M$
%where $I$ is a directed set and 
such that for every $i \in I$:
\begin{enumerate}
\item $\varphi_i(G^*)$ is symmetric, i.e. $e \in \varphi_i(G^*)=\varphi_i(G^*)^{-1}$,
\item there is $j \in I$ for which $\varphi_j(G^*)\cdot \varphi_j(G^*) \subseteq \varphi_i(G^*)$.
\end{enumerate}

Since ${G^*}^{00}_{\Delta,M}$ is of bounded index, each $\varphi_i(G^*)$ is left generic, i.e. there exist $g_{i,1},\dots,g_{i,n_i} \in G$ such that $g_{i,1}\varphi(G^*) \cup \dots \cup g_{i,n_i}\varphi_i(G^*)=G^*$.

We need to show that if $\tp_{\Delta}(a/M)=\tp_{\Delta}(b/M)$, then $a^{-1}b \in {G^*}^{00}_{\Delta,M}$. For this it is enough to show that for any $i \in I$, $a^{-1}b \in \varphi_i(G^*)$. Choose $j \in I$ for which $\varphi_j(G^*)\cdot \varphi_j(G^*) \subseteq \varphi_i(G^*)$. Since each $g_{j,k}\varphi_j(x)$ is a $\Delta$-formula over $M$, we get that $g_{j,k}\varphi_j(x) \in \tp_{\Delta}(a/M)$ for some $k \in \{1,\dots,n_j\}$. But then $g_{j,k}\varphi_j(x) \in \tp_{\Delta}(b/M)$. Hence, $a^{-1}b \in \varphi_j(G^*)^{-1}\varphi_j(G^*)=\varphi_j(G^*)\varphi_j(G^*) \subseteq \varphi_i(G^*)$.
\end{proof}

The quotients $G^*/{G^*}^{00}_M$ and $G^*/{G^*}^{00}_{\Delta,M}$ will be always equipped with the logic topology (see the end of Section \ref{section: preliminaries}).
%Whenever we consider the quotient of $G^*$ be a bounded index, type-definable subgroup $H$ (e.g. $G^*/{G^*}^{00}_M$ or $G^*/{G^*}^{00}_{\Delta,M})$, we equip it with the logic topology (i.e. a subset of $G^*/H$ is closed if its preimage under the quotient map is type-definable). Then $G^*/H$ is a compact, Hausdorff topological group. If $H$ is type-definable over $M$, then the preimage of any closed set is in fact type-definable over $M$.

Recall that a {\em definable compactification} of $G$ is a definable homomorphism from $G$ to a compact, Hausdorff group with dense image. The {\em definable Bohr compactification} of $G$ is a unique (up to $\cong$) {\em universal definable compactification} $G$, that is a definable compactification $f \colon G \to H$ such that for an arbitrary definable compactification $f_1 \colon G \to H_1$ there is a unique morphism from $f$ to $f_1$ (i.e. a group homomorphism $h\colon H \to H_1$ such that $h\circ f = f_1$). 

In \cite[Proposition 3.4]{GiPePi}, it was proven that the natural homomorphism from $G$ to $G^*/{G^*}^{00}_M$ is the definable Bohr compactification of $G$. Here, we extend this result to the $\Delta$-definable context. 

By a {\em $\Delta$-definable compactification of $G$} we mean a $\Delta$-definable
homomorphism from $G$ to a compact, Hausdorff group with dense image. A universal $\Delta$-definable compactification of $G$ is defined analogously to the universal definable compactification.

%We say that a $\Delta$-definable homomorphism from $G$ to a compact, Hausdorff group $C$ with dense image is the {\em $\Delta$-definable Bohr compactification} of $G$ if it is universal in the category of such homomorphisms from $G$ (which we call {\em $\Delta$-definable compactifications} of $G$).

\begin{prop}\label{proposition: Delta-definable Bohr compactification}
The natural homomorphism from $G$ to $G^*/{G^*}^{00}_{\Delta,M}$ is a unique (up to $\cong$) universal $\Delta$-definable compactification of $G$, which we call the $\Delta$-definable Bohr compactification of $G$.
\end{prop}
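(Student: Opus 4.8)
The plan is to show two things: first, that the natural homomorphism $\chi\colon G \to G^*/{G^*}^{00}_{\Delta,M}$ is genuinely a $\Delta$-definable compactification (i.e. that it is a $\Delta$-definable map into a compact Hausdorff group with dense image), and second, that it is universal among all such compactifications. The target is a compact Hausdorff group because ${G^*}^{00}_{\Delta,M}$ is $\Delta$-type-definable over $M$, hence type-definable over $M$, hence the logic topology makes $G^*/{G^*}^{00}_{\Delta,M}$ a compact Hausdorff group (and it is a group since ${G^*}^{00}_{\Delta,M}$ is normal by Proposition \ref{proposition: G00 is normal}); density of the image of $G$ is immediate from Remark \ref{remark: denseness very basic}. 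So the first genuine content is $\Delta$-definability of $\chi$, and this is exactly where I expect to invoke Lemma \ref{lemma: mu is contained}.

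For the $\Delta$-definability of $\chi$, I would argue as follows. Let $f^* \colon G^* \to G^*/{G^*}^{00}_{\Delta,M}$ be the quotient map; its restriction to $G$ is $\chi$. I want to show $\chi$ is $\Delta$-definable, and by Lemma \ref{prolongation}(2)(ii) it suffices to show that $f^*$ is $\Delta$-definable over $M$, i.e. that for every closed $D \subseteq G^*/{G^*}^{00}_{\Delta,M}$ the preimage $(f^*)^{-1}[D]$ is $\Delta$-type-definable over $M$. Since the preimage of a closed set under the quotient map is type-definable over $M$ (by definition of the logic topology), by Remark \ref{remark: added because of the Referee} it is enough to check that $(f^*)^{-1}[D]$ is a union of sets of realizations of complete $\Delta$-types over $M$. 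This is precisely the statement that $f^*$ is constant on complete $\Delta$-types over $M$, i.e. that $\tp_{\Delta}(a/M)=\tp_{\Delta}(b/M)$ implies $a\,{G^*}^{00}_{\Delta,M} = b\,{G^*}^{00}_{\Delta,M}$, equivalently $a^{-1}b \in {G^*}^{00}_{\Delta,M}$; and this is exactly what Lemma \ref{lemma: mu is contained} gives, since $a^{-1}b$ is then a generator of $\mu$ and $\mu \leq {G^*}^{00}_{\Delta,M}$.

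For universality, I would follow the argument of \cite[Proposition 3.4]{GiPePi}. Given an arbitrary $\Delta$-definable compactification $f_1 \colon G \to H_1$, by Lemma \ref{prolongation}(2)(i) it extends uniquely to $f_1^* \colon G^* \to H_1$ which is $\Delta$-definable over $M$; one checks using the explicit prolongation formula that $f_1^*$ is a group homomorphism (this uses that $f_1$ is a homomorphism with dense image and a standard continuity/net argument). The kernel $\ker f_1^*$ is then a subgroup of $G^*$ which is $\Delta$-type-definable over $M$ (being $(f_1^*)^{-1}$ of the closed point $e_{H_1}$) and of bounded index (since $|H_1|$ is bounded and $G^*/\ker f_1^* \hookrightarrow H_1$). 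By the minimality in the definition of ${G^*}^{00}_{\Delta,M}$, we get ${G^*}^{00}_{\Delta,M} \leq \ker f_1^*$, so $f_1^*$ factors through $G^*/{G^*}^{00}_{\Delta,M}$ via an induced group homomorphism $h \colon G^*/{G^*}^{00}_{\Delta,M} \to H_1$ satisfying $h \circ \chi = f_1$. Continuity of $h$ follows because both topologies are logic/compact topologies and $h$ pulls back closed sets to closed sets, and uniqueness of $h$ follows from density of the image of $G$.

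The main obstacle, and the step requiring genuine care, is the $\Delta$-definability of $\chi$ via the chain Lemma \ref{lemma: mu is contained} $\Rightarrow$ constancy on $\Delta$-types $\Rightarrow$ Remark \ref{remark: added because of the Referee} $\Rightarrow$ $\Delta$-type-definability; the introduction explicitly flags that this is the nonobvious point (``we need Lemma \ref{lemma: mu is contained} in order to show that the quotient map $G \to G^*/{G^*}^{00}_{\Delta,M}$ is $\Delta$-definable''). In the full definable case the quotient map is automatically type-definable and nothing extra is needed, but in the $\Delta$-local setting type-definability of the preimage does not by itself yield $\Delta$-type-definability, which is exactly why the union-of-$\Delta$-types reformulation and Remark \ref{remark: added because of the Referee} are indispensable.
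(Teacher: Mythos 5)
Your proposal is correct and follows essentially the same route as the paper: density via Remark \ref{remark: denseness very basic}, $\Delta$-definability of the quotient map via Lemma \ref{prolongation}, Lemma \ref{lemma: mu is contained} and Remark \ref{remark: added because of the Referee}, and universality by adapting \cite[Proposition 3.4]{GiPePi} through the prolongation $f_1^*$, the homomorphism check via the explicit formula, and minimality of ${G^*}^{00}_{\Delta,M}$ applied to $\ker f_1^*$. You also correctly identified the $\Delta$-definability of the quotient map as the genuinely new step compared with the full definable case.
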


\begin{proof}
First, we check that the natural homomorphism $\pi \colon G \to G^*/{G^*}^{00}_{\Delta,M}$ (given by $\pi(g)=g/{G^*}^{00}_{\Delta,M}$) is a $\Delta$-definable compactification of $G$. Density of $\pi[G]$ in $G^*/{G^*}^{00}_{\Delta,M}$ (equipped with the logic topology) follows from Remark \ref{remark: denseness very basic}. Let $\bar \pi\colon G^* \to G^*/{G^*}^{00}_{\Delta,M}$ be the quotient map.
%the fact that the preimage of an open subset of $G^*/{G^*}^{00}_{\Delta,M}$ by the quotient map $\bar \pi\colon G^* \to G^*/{G^*}^{00}_{\Delta,M}$ is a $\bigvee$-definable over $M$ subset of $G^*$ and as such it has a point in $G$. 
For $\Delta$-definability of $\pi$ it is enough to show that $\bar \pi$ is $\Delta$-definable over $M$ (as $\bar \pi$ extends $\pi$ and we can use Lemma \ref{prolongation}). Consider any closed  $D \subseteq  G^*/{G^*}^{00}_{\Delta,M}$. Then, ${\bar \pi}^{-1}[D]$ is type-definable over $M$, and, by Lemma \ref{lemma: mu is contained}, it is a union of sets of realizations of complete $\Delta$-types over $M$. By Remark \ref{remark: added because of the Referee}, this implies that ${\bar \pi}^{-1}[D]$ is $\Delta$-type-definable over $M$.

Now, we check universality of $\pi$. We adapt the proof of \cite[Proposition 3.4]{GiPePi}.  Consider any $\Delta$-definable compactification $f\colon G \to C$. By Lemma \ref{prolongation}, there is a unique 
%$M$-definable (even $\emptyset$-definable) 
$\Delta$-definable over $M$ function $f^*\colon G^* \to C$ extending $f$. 
We check that $f^*$ is a homomorphism.
Consider any $a,b \in G^*$, and let $p:=\tp(a/M)$, $q:= \tp(b/M)$, and $r:=\tp(ab/M)$. Then, by Remark \ref{rem: prolongation} and compactness,

$$
\begin{array}{lll}
\{f^*(ab)\} &= &\bigcap_{\theta \in r} \cl (f[\theta(G)]) \subseteq \bigcap_{\varphi \in p,\psi \in q} \cl(f[\varphi(G) \cdot \psi(G)])\\
 &= &\bigcap_{\varphi \in p, \psi \in q} \cl(f[\varphi(G)]) \cdot \cl(f[\psi(G)]) \\
&= &\bigcap_{\varphi \in p} \cl(f[\varphi(G)])\cdot \bigcap_{\psi \in q} \cl(f[\psi(G)])\\
& = &\{f^*(a)f^*(b)\}.
\end{array}$$

Since $f^*$ is $\Delta$-definable over $M$, $\ker (f^*)$ is a normal subgroup of bounded index which is an intersection of some sets $\Delta$-type-definable over $M$. Since ${G^*}^{00}_{\Delta,M}$ is the smallest such group, we finish as in the proof of \cite[Proposition 3.4]{GiPePi}. Namely, there is a natural continuous homomorphism from $G^*/{G^*}^{00}_{\Delta,M}$ to $G^*/\ker(f^*)$, and $G^*/\ker(f^*)$ is naturally topologically isomorphic with $C$, so we get a continuous homomorphism  $h \colon G^*/{G^*}^{00}_{\Delta,M} \to C$ 
%which commutes with $\pi$ and $f$.
such that $h \circ \pi = f$. 
\end{proof}

\begin{rem}
i) ${G^*}^{00}_{\Delta,M}$ is the smallest type-definable over $M$ subgroup of $G^*$ containing $\mu$.\\
ii) ${G^*}^{00}_{\Delta,M} \geq\mu \cdot {G^*}^{00}_{M}$.
\end{rem}

\begin{proof}
i) Let $H$ be this smallest subgroup. Then $H$ is type-definable over $M$ and it is a union of sets of realizations of complete $\Delta$-types over $M$, so it is $\Delta$-type-definable over $M$ by Remark \ref{remark: added because of the Referee}. Thus, by Remark \ref{rem: mu of bounded index}, ${G^*}^{00}_{\Delta,M} \leq H$. The opposite inclusion follows  from Lemma \ref{lemma: mu is contained}.\\
%ii) follows from (i) and Remark \ref{rem: mu of bounded index}.
ii) follows from Lemma \ref{lemma: mu is contained} and the definitions of ${G^*}^{00}_{\Delta,M}$ and ${G^*}^{00}_{M}$.
\end{proof}

This suggests the following generalization of the component ${G^*}^{000}_M$ to the local context.

\begin{dfn}
${G^*}^{000}_{\Delta,M}$ is the smallest normal, invariant over $M$ subgroup of $G^*$ of bounded index containing $\mu$.
\end{dfn}

We know that ${G^*}^{000}_M$ is generated by all elements $a^{-1}b$ for $a\equiv_M b$. Therefore, ${G^*}^{000}_M \leq \mu$, and so $\mu$ has bounded index in $G^*$ (which was already noted in Remark \ref{rem: mu of bounded index}). So, by the obvious observation that $\mu$ is invariant over $M$, we get
%We know that ${G^*}^{000}_M$ is generated by all elements $a^{-1}b$ for $a\equiv_M b$. Therefore, ${G^*}^{000}_M \leq \mu$ (so $\mu$ has bounded index in $G^*$). Thus, the following remark follows from the above definition.

\begin{rem}
${G^*}^{000}_{\Delta,M}=\langle \mu^{G^*} \rangle$, where $\langle \mu^{G^*} \rangle$ denotes the normal closure of $\mu$.
\end{rem}

It is clear that when we are in the definable context, i.e. $\Delta$ consists of all formulas of the appropriate form, then ${G^*}^{00}_{\Delta,M}= {G^*}^{00}_M$ and ${G^*}^{000}_{\Delta,M}={G^*}^{000}_M=\mu$.

\begin{ques}
%If we drop the word ``normal'' in the definition of ${G^*}^{000}_{\Delta,M}$, do we get the same object? Equivalently, is $\mu$ a normal subgroup of $G^*$?
Is it true that ${G^*}^{000}_{\Delta,M} = \mu$? Equivalently, is $\mu$ a normal subgroup of $G^*$?
\end{ques}

Take the notation from Section \ref{section: definable ambit}. 
%However, to make explicit that the data depends on $\Delta$, we will write $E_\Delta, E'_\Delta, {F_0}_\Delta, F_\Delta, {\bar F}_\Delta$ in place of $E, E',F_0,F,\bar F$, respectively. Working in the definable context (i.e. with $\Delta$ containing all formulas of the appropriate form), we will skip $\Delta$. 
Now, we define a counterpart of Newelski's map defined in the externally definable context (see \cite[Proposition 4.4]{Ne1} or \cite[Proposition 3.1]{KrPi}).

\begin{prop}\label{proposition: theta is onto}
The map $\hat{\theta} \colon G^*/E'_{\Delta} \to G^*/{G^*}^{00}_{\Delta,M}$ given by $\hat{\theta}(a/E'_{\Delta})=a/{G^*}^{00}_{\Delta,M}$ is a well-defined, continuous semigroup epimorphism. 
\end{prop}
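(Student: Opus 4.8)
The plan is to establish the four required properties in turn: well-definedness, the semigroup-homomorphism property, surjectivity, and continuity.

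\emph{Well-definedness.} Introduce the relation $R$ on $G^*$ defined by $R(\alpha,\beta)$ iff $\alpha^{-1}\beta \in {G^*}^{00}_{\Delta,M}$. Since ${G^*}^{00}_{\Delta,M}$ is a subgroup which is $\Delta$-type-definable over $M$, hence type-definable over $M$, the relation $R$ is a type-definable over $M$ equivalence relation, and $\hat{\theta}$ is well-defined precisely when $E'_\Delta \subseteq R$. By Theorem \ref{theorem: main theorem} we have $E'_\Delta = \bar F_\Delta$, and since $\bar F_\Delta$ is the finest type-definable over $M$ equivalence relation containing $F_\Delta$ (the transitive closure of ${F_0}_\Delta$), it suffices to check ${F_0}_\Delta \subseteq R$. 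So suppose ${F_0}_\Delta(\alpha,\beta)$ is witnessed by $a,b,a_1,b_1$ as in the definition. From $\tp_{\Delta}(a/M)=\tp_{\Delta}(a_1/M)$ and $\tp_{\Delta}(b/M)=\tp_{\Delta}(b_1/M)$, Lemma \ref{lemma: mu is contained} yields $u := a^{-1}a_1 \in \mu \leq {G^*}^{00}_{\Delta,M}$ and $v := b^{-1}b_1 \in \mu \leq {G^*}^{00}_{\Delta,M}$. Then
\[
\alpha^{-1}\beta = (ab)^{-1}(a_1 b_1) = b^{-1} u\, b\, v = (b^{-1} u b)\, v,
\]
which lies in ${G^*}^{00}_{\Delta,M}$ by normality (Proposition \ref{proposition: G00 is normal}), so $R(\alpha,\beta)$ holds.

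\emph{Homomorphism and surjectivity.} Surjectivity is immediate, as $\hat{\theta}(a/E'_\Delta)=a/{G^*}^{00}_{\Delta,M}$ and every element of $G^*/{G^*}^{00}_{\Delta,M}$ has this form. For the semigroup-homomorphism property, fix $\alpha,\beta \in G^*$ and invoke Proposition \ref{proposition: formula for *}: choosing $b$ with $\tp_{\Delta}(b/M)=\tp_{\Delta}(\beta/M)$ and $a$ realizing a $\Delta$-coheir extension of $\tp_{\Delta}(\alpha/M)$ over $M,b$, we obtain $\alpha/E'_\Delta * \beta/E'_\Delta = (a\cdot b)/E'_\Delta$, whence $\hat{\theta}(\alpha/E'_\Delta * \beta/E'_\Delta) = (ab)/{G^*}^{00}_{\Delta,M}$. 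On the other hand, $\tp_{\Delta}(a/M)=\tp_{\Delta}(\alpha/M)$ and $\tp_{\Delta}(b/M)=\tp_{\Delta}(\beta/M)$, so by Lemma \ref{lemma: mu is contained} the elements $a,\alpha$ lie in a common coset of ${G^*}^{00}_{\Delta,M}$, and likewise $b,\beta$. Since ${G^*}^{00}_{\Delta,M}$ is normal, the quotient map $G^* \to G^*/{G^*}^{00}_{\Delta,M}$ is a group homomorphism, so $(ab)/{G^*}^{00}_{\Delta,M} = (\alpha/{G^*}^{00}_{\Delta,M})(\beta/{G^*}^{00}_{\Delta,M}) = \hat{\theta}(\alpha/E'_\Delta)\,\hat{\theta}(\beta/E'_\Delta)$, as required.

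\emph{Continuity.} Both $G^*/E'_\Delta$ (via its identification with $S_{G,\Delta}(M)/E_\Delta$ together with Remark \ref{remark: added because of the Referee}) and $G^*/{G^*}^{00}_{\Delta,M}$ carry a topology in which a set is closed exactly when its preimage in $G^*$ is type-definable over $M$. Writing $\pi \colon G^* \to G^*/E'_\Delta$ and $\bar\pi \colon G^* \to G^*/{G^*}^{00}_{\Delta,M}$ for the quotient maps, we have $\hat{\theta} \circ \pi = \bar\pi$. Hence for any closed $D \subseteq G^*/{G^*}^{00}_{\Delta,M}$ the set $\bar\pi^{-1}[D] = \pi^{-1}[\hat{\theta}^{-1}[D]]$ is type-definable over $M$, so $\hat{\theta}^{-1}[D]$ is closed and $\hat{\theta}$ is continuous.

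The conceptual crux is the well-definedness step, where one must route the argument through Theorem \ref{theorem: main theorem} to replace $E'_\Delta$ by $\bar F_\Delta$ and reduce to the concrete relation ${F_0}_\Delta$, and then combine the two structural inputs $\mu \leq {G^*}^{00}_{\Delta,M}$ (Lemma \ref{lemma: mu is contained}) and the normality of ${G^*}^{00}_{\Delta,M}$ (Proposition \ref{proposition: G00 is normal}). The homomorphism property is the other place where genuine content enters, through the explicit formula for $*$ in Proposition \ref{proposition: formula for *}; surjectivity and continuity are then formal.
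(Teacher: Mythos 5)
Your proposal is correct and follows essentially the same route as the paper: well-definedness via Theorem \ref{theorem: main theorem} reducing to ${F_0}_\Delta$ and then combining Lemma \ref{lemma: mu is contained} with Proposition \ref{proposition: G00 is normal}, the homomorphism property via Proposition \ref{proposition: formula for *}, and surjectivity and continuity as formal consequences of the definitions. The only difference is cosmetic (you compute $\alpha^{-1}\beta$ where the paper computes $\beta^{-1}\alpha$, and you spell out the continuity step the paper leaves to the reader).
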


\begin{proof}
First, we check that $\hat{\theta}$ is well-defined.
Theorem \ref{theorem: main theorem} tells us that $E'_\Delta ={\bar F}_\Delta$. So, by the definition of ${\bar F}_\Delta$ and the fact that the relation of lying in the same left coset of ${G^*}^{00}_{\Delta,M}$ is type-definable over $M$, we see that it is enough to show that whenever ${F_0}_{\Delta}(\alpha,\beta)$, then $\beta ^{-1}\alpha\in {G^*}^{00}_{\Delta,M}$.

So, take any $\alpha, \beta$ with  ${F_0}_{\Delta}(\alpha,\beta)$. Then $\alpha=ab$ and $\beta= a_1b_1$ for some $a,b,a_1,b_1$ such that  $\tp_{\Delta}(a/M)=\tp_{\Delta}(a_1/M)$ and $\tp_{\Delta}(b/M)=\tp_{\Delta}(b_1/M)$. Then $\beta^{-1}\alpha=b_1^{-1}a_1^{-1}ab \in b_1^{-1} \mu b =b_1^{-1}b\mu^b \subseteq \mu \cdot \mu^b$. The last set is contained in ${G^*}^{00}_{\Delta,M}$, because ${G^*}^{00}_{\Delta,M}$ is normal and contains $\mu$ (by Proposition \ref{proposition: G00 is normal} and Lemma \ref{lemma: mu is contained}).

The fact that $\hat{\theta}$ is onto is clear from the definition of $\hat{\theta}$. Continuity follows from the definition of the logic topology. It remains to check that $\hat{\theta}$ is a homomorphism. Identifying $G^*/E'_{\Delta}$ with $S_{G,\Delta}(M)/E_\Delta$, we see that $\hat{\theta}(p/E_\Delta)=a/{G^*}^{00}_{\Delta,M}$ for any $a\models p$. Consider any $p,q \in S_{G,\Delta}(M)$ and choose $b \models q$ and $a$ satisfying a $\Delta$-coheir extension of $p$ over $M,b$. By Proposition \ref{proposition: formula for *}, $p/E_\Delta * q/E_\Delta=\tp_{\Delta}(ab/M)$. Thus, $\hat{\theta}(p/E_\Delta * q/E_\Delta)=ab/{G^*}^{00}_{\Delta,M}=\hat{\theta}(p/E_\Delta) \cdot \hat{\theta}(q/E_\Delta)$.
\end{proof}

%A standard argument shows that when $\M$ is a minimal left ideal in $G^*/E'_\Delta$ and $u$ is an idempotent in $\M$, then $\hat{\theta}$ restricted to the Ellis group $u\M$ (this restricted $\hat{\theta}$ will be from now on denoted by $\theta$) is a group epimorphism onto $G^*/{G^*}^{00}_{\Delta,M}$. Indeed, by Proposition \ref{proposition: theta is onto}, the only thing to show is that $\hat{\theta}[u\M]=G^*/{G^*}^{00}_{\Delta,M}$. But this is true by Proposition \ref{proposition: theta is onto} and the fact that $u\M=u * G^*/E'_\Delta * u$.
Let $\M$ be a minimal left ideal in $G^*/E'_\Delta$ and $u$ an idempotent in $\M$. Let $\theta \colon u{\mathcal M} \to G^*/{G^*}^{00}_{\Delta,M}$ be the restriction of $\hat{\theta}$ to $u\M$. By Proposition \ref{proposition: theta is onto} and the fact that $u\M=u * G^*/E'_\Delta * u$, we get that $\theta$ is a group epimorphism.

In the externally definable case, a very important ingredient of the theory was the fact that there is also an epimorphism from the universal externally definable $G$-ambit and from its Ellis group to $G^*/{G^*}^{000}_M$. In the current context, we leave it as an open problem. 

\begin{problem}
1) Is $\hat{f} \colon G^*/E'_{\Delta} \to G^*/{G^*}^{000}_{\Delta,M}$ given by $\hat{f}(a/E'_{\Delta})=a/{G^*}^{000}_{\Delta,M}$ a well-defined semigroup epimorphism? Notice that whenever it is well-defined, then it is an epimorphism (as in the proof of Proposition \ref{proposition: theta is onto}).\\
2) If the answer to the above question in general is no, the problem is to understand when $\hat{f}$ is well-defined. This may lead to a new dividing line (motivated by topological dynamics and model theory together) in the class of groups definable in first order structures.\\
3) If $\hat{f}$ is well-defined, then its restriction $f$ to the Ellis group $u\M$ is also an epimorphism. If, however, it turns out that $f$ is not always well-defined, an interesting question is whether always there exits a (natural) epimorphism from $u\M$ to $G^*/{G^*}^{000}_{\Delta,M}$.\\
4) It is very interesting to consider the above questions in the definable context, i.e. when $\Delta$ consists of all formulas of the appropriate form. For example, the first question asks if $\hat{f} \colon G^*/E'  \to G^*/{G^*}^{000}_{M}$ given by $\hat{f}(a/E')=a/{G^*}^{000}_{M}$ is well-defined. 
\end{problem}

Let us look at the definable context. Notice that any counter-example to the statement that  $\hat{f} \colon G^*/E'  \to G^*/{G^*}^{000}_{M}$ given by $\hat{f}(a/E')=a/{G^*}^{000}_{M}$ is well-defined must satisfy ${G^*}^{000}_{M} \ne {G^*}^{00}_{M}$ and not all types in $S_G(M)$ are definable (the later property follows from Remark \ref{remark: uniqueness of E}(ii) and the fact that $a \equiv_M b$ implies $a^{-1}b \in {G^*}^{000}_M$). So a (simplest) natural candidate is the universal cover of $\SL_2(\mathbb{R})$ interpreted in the model $((\mathbb{Z},+),(K,+,\cdot))$, where $K$ is the real closure of the rationals. (For a model-theoretic analysis of the universal cover of $\SL_2(\R)$ (in particular, for the proofs that the two connected components differ) see \cite{CoPi} and \cite{GiKr}). However, an analysis of this example from the point of view of the definable topological dynamics seems quite complicated. Below we describe what happens in a much simpler example, namely in the unit circle $S^1(K)$.

The following remark follows easily from definitions by an argument as in the second paragraph of the proof of Proposition \ref{proposition: theta is onto}.
\begin{rem}
$\hat{f}^- \colon G^*/F_\Delta \to G^*/{G^*}^{000}_{\Delta,M}$ given by $\hat{f}^- (a/F_\Delta)=a/{G^*}^{000}_{\Delta,M}$ is a well-defined function. So, if $F_\Delta={\bar F}_\Delta$ (which is unlikely in general), then $\hat{f}$ is well-defined.
\end{rem}

\begin{ex}\label{example: E non-trivial}
Let $M:=(K,+,\cdot) $ be the real closure of  the rationals, $\Delta$ consist of all formulas of the appropriate form, and let $G:=S^1(K)$ be the unit circle computed in $K \times K$. Then 
%$$S_{G}(M)=\{p_a: a\in S^{1}(\R) \setminus S^1(K)\} \cup \{p_a^-: a \in S^1(K)\} \cup \{p_a^+: a \in S^1(K)\}\cup \{q_a: a \in S^1(K)\},$$ 
$$S_{G}(M)=\{p_a: a\in S^{1}(\R) \setminus S^1(K)\} \cup \{p_a^-,p_a^+,q_a: a \in S^1(K)\},$$
where $q_a:=\tp(a/M)$ is the algebraic type isolated by $x=a$, $p_a$ is the cut at $a$, and $p_a^-$ and $p_a^+$ are the left and right cuts at $a$, respectively. 

It is well-known that $\mu={G^*}^{000}_M={G^*}^{00}_M$ is the subgroup of all infinitesimal elements (i.e. the monad of $1$ in $S^1(\C)$). Moreover, $G^*/{G^*}^{00}_M$ is homeomorphic with the real circle $S^1$.

%Using Theorem \ref{theorem: main theorem}, we will show that for any $\alpha,\beta \in S^1(\C)$ 
We will show that for any $\alpha,\beta \in S^1(\C)$ 
\begin{equation}\label{equation: new in example}
F_0(\alpha,\beta) \iff (\beta \cdot \alpha^{-1} \in \mu \wedge \alpha \notin S^1(K) \wedge \beta \notin S^1(K)) \lor \alpha=\beta.
\end{equation}
$(\Rightarrow)$ First, consider the case when $\alpha \in S^1(K)$ or $\beta \in S^{1}(K)$. Without loss $\alpha \in S^1(K)$. Take $a,b,a_1,b_1$ witnessing that $F_0(\alpha,\beta)$. Then $a=\alpha \cdot b^{-1}$ and $\tp(a/K,b)$ is finitely satisfiable in $K$, so $b \in S^1(K)$, and hence $a \in S^1(K)$. Since $\tp(a_1/K)= \tp(a/K)$ and $\tp(b_1/K)=\tp(b/K)$, we conclude that $a_1=a$ and $b_1=b$. Therefore, $\alpha = \beta$.

Now, consider the case when  $\alpha \notin S^1(K)$ and $\beta \notin S^{1}(K)$. Take  $a,b,a_1,b_1$ witnessing that $F_0(\alpha,\beta)$. Then the computation from the second paragraph of the proof of Proposition \ref{proposition: theta is onto} shows that $\beta\cdot \alpha^{-1} \in \mu$.\\[1mm] 
$(\Leftarrow)$ If $\alpha =\beta$, then clearly $F_0(\alpha,\beta)$. So assume that $\beta \cdot \alpha^{-1} \in \mu$, $\alpha \notin S^1(K)$, and $\beta \notin S^1(K)$. Then $\xi_0:=\st(\alpha)=\st(\beta)$, where $\st$ is the standard part map computed on the circle. 
%Since $\alpha \notin S^1(K)$ and  $\beta \notin S^1(K)$, we can find $\xi \in S^1(\R) \setminus S^1(K)$ such that for $\xi_1 := \xi_0^{-1} \cdot \xi$ we have $\alpha, \beta \notin \acl(K,\xi_1)$. 
Since $K$ is countable, we can find $\xi_1 \in S^1(\R) \setminus \acl(K,\alpha,\beta,\xi_0)$, and put $\xi:=\xi_0\cdot \xi_1$. Then  $\xi \in S^1(\R) \setminus S^1(K)$, and since  $\alpha, \beta \notin S^1(K)$, the exchange property for $\acl$ implies that  $\alpha, \beta \notin \acl(K,\xi_1)$.
Define $a=\alpha \cdot \xi_1$, $b=\xi_1^{-1}$, $a_1=\beta \cdot \xi_1$, $b_1 = \xi_1^{-1}$. We check that $a,b,a_1,b_1$ witness that $F_0(\alpha,\beta)$. The equalities $\alpha =a \cdot b$, $\beta =a_1 \cdot b_1$, and $\tp(b/K)=\tp(b_1/K)$ are obvious. Since $\st(a) = \st(a_1) = \xi \notin S^1(K)$,  we get $\tp(a/K)=\tp(a_1/K)=p_\xi$. It remains to check that $\tp(a/K,\xi_1)$ and $\tp(a_1/K,\xi_1)$ are finitely satisfiable in $K$. As $\xi_1,\xi_2 \in S^1(\R)$ and $S^1(K)$ is dense in $S^1(\R)$, it is enough to check that $a,a_1 \notin \acl(K,\xi_1)$. But this is clear, as otherwise $\alpha \in \acl(K,\xi_1)$ or $\beta \in \acl(K,\xi_1)$, a contradiction.\\

By (\ref{equation: new in example}), $F_0$ is already an $M$-type-definable equivalence relation, so, by Theorem \ref{theorem: main theorem}, we conclude that $F_0=F={\bar F}=E'$.  This in turn implies that the classes of $E$ are the singletons $\{p_a\}$, $a \in S^1(\R) \setminus S^1(K)$, the singletons $\{q_a\}$, $a \in S^1(K)$, and the pairs $\{p_a^-,p_a^+\}$, $a \in S^1(K)$. Hence, $E$ is non-trivial, which implies that the ambit $(G,S_G(M),\tp(e/M))$ is not definable by Remark \ref{remark: uniqueness of E}.

%One can check that $S_G(M)/E$ is the real circle $S^1$ with an additional copy of each point from $S^1(K)$ which is assumed to be open (i.e. isolated). 
One can check that $S_G(M)/E$ is the real circle $S^1$ with an additional copy of each point from $S^1(K)$, with the usual circle topology expanded by new subbasic open sets which are the singletons of the additional points of the circle  and their complements (in particular, each additional point is clopen).
Then there is a unique minimal left ideal $\M$ in $S_G(M)/E$ and it consists of $E$-classes of the non-algebraic types (which follows from the observations that the $G$-orbit of each algebraic type is dense and that each non-algebraic type lies in the closure of an arbitrary $G$-orbit). Moreover, there is a unique idempotent $u \in \M$, namely the $E$-class $\{p_1^-,p_1^+\}$. In particular, ${\mathcal M}=u{\mathcal M}$ by Fact \ref{Ellis theorem}(ii). One easily sees that $\theta \colon u\M \to G^*/{G^*}^{00}_M$ is an isomorphism. In contrast, $\hat{\theta}\colon S_G(M)/E \to G^*/{G^*}^{00}_M$ is not injective, as it glues $\{p_a^-,p_a^+\}$ with $\{q_a\}$ for every $a \in S^1(K)$. 

Note also that in this example the universal definable $G$-ambit is different (i.e. non-isomorphic) from the universal externally definable $G$-ambit, which follows from the more general observation that if $E$ in non-trivial, then the universal externally definable $G$-ambit is not definable. Indeed, if $S_{G,\ext}(M)$ was a definable $G$-ambit, then its homomorphic image $S_G(M)$ would be also definable, so $E$ would be trivial by Remark \ref{remark: uniqueness of E}. 
%We lave as an easy exercise to give an explicit description of $S_{G,\ext}(M)$.
One can check that $S_{G,\ext}(M)$ can be identified with the collection of all points of $S^1(K)$ and all left and right cuts at all points of $S^1(\R)$ with the topology whose description is left as an exercise.

%Let $M:=(K,+,\cdot) $ be the real closure of  the rationals, and let $G:=S^1(K)$ be the unit circle computed in $K \times K$. Then $S_{G}(K)=\{p_a: a\in \R \setminus K\} \cup \{p_a^-: a \in K\} \cup \{p_a^+: a \in K\}$, where $p_a$ is the cut at $a$, and $p_a^-$ and $p_a^+$ are the left and right cuts at $a$, respectively. One can check that the equivalence classes of $E$ are the singletons $\{p_a\}$, $a \in \R \setminus K$, and pairs $\{p_a^-,p_a^+\}$, $a \in K$.  So $S_{G}(K)/E$ is homeomorphic with the real circle $S^1:=S^1(\R)$, and $\hat{\theta}$ is an isomorphism from $S_G(M)/E$ to $G^*/{G^*}^{00}_M$. Since $E$ is non-trivial, we see that the ambit $(G,S_G(K),\tp(e/K))$ is not definable. In this example, we also have that $F_0=F=E'$. To prove all these results, one can use Theorem \ref{theorem: main theorem}; the details are left to the reader. Note also that the universal externally definable $G$-ambit (which is the circle $S^1$ with each point ``doubled'') is different from the definable one.
\end{ex}

\section{On some results from \cite{KrPi} in the $\Delta$-definable context}\label{section: results from KrPi}

Most of the main results of \cite{KrPi} are about connections of the Ellis group and the externally definable generalized Bohr compactification of $G$ with quotients of $G^*$ by connected components. It is very important in there that we have a natural epimorphism from the Ellis group of the universal externally definable $G$-ambit to $G^*/{G^*}^{000}_M$. As we saw in the previous section, in the definable context, we do not know whether such an epimorphism exists. However, we have the natural epimorphism $\theta$ from the Ellis group of the universal definable $G$-ambit to $G^*/{G^*}^{00}_M$. 

In this section, we formulate variants of some results from \cite{KrPi} in our $\Delta$-definable context (with $G^*/{G^*}^{00}_{\Delta,M}$ in place of ${G^*}/{G^*}^{000}_M$) whose proofs are obvious adaptations of the proofs from \cite{KrPi}, so they will be omitted. It would be really interesting, however, to get these kind of results with  $G^*/{G^*}^{000}_{\Delta,M}$ in place of $G^*/{G^*}^{00}_{\Delta,M}$, which would allow us to extend or strengthen some results from \cite{KrPi}, and, in the case of a countable language, maybe apply to get information on the Borel cardinality of ${G^*}^{00}_{\Delta,M}/{G^*}^{000}_{\Delta,M}$ and simplify the proofs from \cite{KrPiRz} for the quotient  ${G^*}^{00}_{M}/{G^*}^{000}_{M}$.

In the final part of this section, we analyze connections with the externally definable topological dynamics, and, using \cite[Theorem 5.7]{ChSi}, we obtain a variant of this result in the definable context.

The key role in \cite{KrPi} is played by the so-called $\tau$-topology introduced by Ellis. Basic theory related to this notion is described in \cite[Chapter IX]{Gl} for the Ellis group of the universal $G$-ambit $\beta G$, but it works similarly for the Ellis group of universal $G$-ambits in many other categories. In \cite[Section 2]{KrPi}, we described how to work with the universal externally definable $G$-ambit. In our new, $\Delta$-definable context, everything works analogously, so we will skip all the discussions, sending the reader to \cite{Gl} and \cite{KrPi} for details. Let us only recall the main definitions.

We take the notation as in previous sections. In particular, $\M$ is a minimal left ideal in $G^*/E'_\Delta$, and $u \in \M$ is an idempotent.

\begin{dfn}
For $A \subseteq G^*/E'_\Delta$ and $p\in G^*/E'_\Delta$, $p \circ A$ is defined as the set of all points $x \in G^*/E'_\Delta$ for which there exist nets $(x_i)$ in $A$ and $(g_i)$ in $G$ such that $\lim_i g_i=p$ (here by $g_i$ we mean $g_i/E'_\Delta$) and $\lim_i g_ix_i=x$.
\end{dfn}

\begin{dfn}\label{Def: tau topology}
For $A \subseteq u\M$, define $\cl_\tau (A) = (u \circ A) \cap u\M$.
\end{dfn}

%As in \cite{Gl}, one shows that $\cl_\tau$ is a closure operator on subsets of $u\M$, and it induces the so-called {\em $\tau$-topology} on $u\M$. This topology is compact and $T_1$, and multiplication is continuous in each coordinate separately. 
 The proofs of 1.2-1.12 (except 1.12(2)) from \cite[Chapter IX]{Gl} go through (with some slight modifications) in our context. In particular, $\cl_\tau$ is a closure operator on subsets of $u\M$, and it induces the so-called {\em $\tau$-topology} on $u\M$. This topology is compact and $T_1$, and multiplication is continuous in each coordinate separately. It is easy to see that $p \circ A$ is always closed, and so the $\tau$-topology on $u\M$ is weaker than the topology inherited from $G^*/E'_\Delta$.
\begin{dfn}
$H(u{\M})$ is the intersection of the sets $\cl_\tau(V)$ with $V$ ranging over all $\tau$-neighborhoods of $u$ in the group $u\M$. 
\end{dfn}
Then $H(u\M)$ is a $\tau$-closed, normal subgroup of $u\M$, and $u\M/H(u\M)$ is a compact, Hausdorff group (see \cite[Chapter IX, Theorem 1.9]{Gl}).

 The notion of {\em generalized Bohr compactification} was introduced in \cite[Chapter VIII]{Gl}. It is recalled in \cite[Definition 1.23]{KrPi} in the externally definable context. In the $\Delta$-definable situation, we take the same definition, replacing the expression ``externally definable'' by ``$\Delta$-definable''.  The reader is referred to 1.13, 1.14, 1.21, 1.22, 1.23 from \cite{KrPi} for details.

It was proven in \cite[Chapter IX, Theorem 4.2]{Gl} that the generalized Bohr compactification of a discrete group $G$ equals $u\M/H(u\M)$ (everything computed in $\beta G$). In \cite[Theorem 2.5]{KrPi}, this was extended to the externally definable context.
%which required some new tricks. 
Since the class of $\Delta$-definable $G$-flows is closed under taking both products and quotients by closed, $G$-invariant equivalence relations, the proof from \cite{KrPi} yields

\begin{thm}\label{theorem: gen. Bohr comp.}
$u\M/H(u\M)$ is the $\Delta$-definable generalized Bohr compactification of $G$.
\end{thm}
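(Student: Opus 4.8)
The plan is to transport the proof of \cite[Theorem 2.5]{KrPi} --- itself an adaptation of \cite[Chapter IX, Theorem 4.2]{Gl} --- essentially verbatim into the $\Delta$-definable category, the only genuinely new task being to check that every flow produced along the way remains inside this category. Two closure properties make this possible, and they are precisely the ones recorded just before the statement: the class of $\Delta$-definable $G$-flows is closed under arbitrary products (Remark \ref{definability of products}) and under quotients by closed, $G$-invariant equivalence relations. Combined with the identification of the universal $\Delta$-definable $G$-ambit as $G^*/{\bar F}_\Delta = G^*/E'_\Delta$ (Theorem \ref{theorem: main theorem}) and its universal property (Corollary \ref{cor: definable ambit is a quotient}), these give a category in which the whole Ellis--Glasner apparatus, including the $\tau$-topology of Definition \ref{Def: tau topology} and the subgroup $H(u\M)$, is available exactly as in the externally definable setting.

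First I would recall that, by definition, the $\Delta$-definable generalized Bohr compactification is the universal object among $\Delta$-definable compactifications $G \to \Gamma$ of the appropriate dynamical type (the class obtained from \cite[Definition 1.23]{KrPi} by replacing ``externally definable'' with ``$\Delta$-definable''). The first half of the argument is to exhibit $u\M/H(u\M)$ as such a compactification. Since $u\M/H(u\M)$ is a compact Hausdorff group by \cite[Chapter IX, Theorem 1.9]{Gl}, it remains only to produce the canonical $\Delta$-definable map $G \to u\M/H(u\M)$; as in \cite{KrPi} this is built from the orbit map $g \mapsto g/E'_\Delta$ into the universal ambit together with the idempotent $u$ and the quotient $u\M \to u\M/H(u\M)$. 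Its $\Delta$-definability is read off from the factorization through $G^*/E'_\Delta$, and density of its image follows from density of the $G$-orbit (Remark \ref{remark: denseness very basic}).

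The second half is universality. Given an arbitrary $\Delta$-definable generalized compactification $f\colon G \to \Gamma$ in the class, I would realize $\Gamma$ together with its flow structure as a $\Delta$-definable $G$-flow, invoke the universal property of $G^*/E'_\Delta$ (Corollary \ref{cor: definable ambit is a quotient}) to extend $f$ to a flow homomorphism on the universal ambit, restrict this homomorphism to the minimal left ideal $\M$ and then to the Ellis group $u\M$, and verify that the induced map $u\M \to \Gamma$ is a group homomorphism, continuous for the $\tau$-topology, which therefore annihilates $H(u\M)$ and factors through $u\M/H(u\M)$. This is exactly the point at which the Glasner calculus 1.2--1.12 of \cite[Chapter IX]{Gl}, together with its $\Delta$-definable reformulation in \cite{KrPi}, is used.

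The main obstacle, and the only content that is not already in \cite{KrPi}, is the bookkeeping that each auxiliary flow invoked in the $\tau$-topology argument --- products of $\Gamma$ with itself and the various quotients used to show $H(u\M)$ lies in the kernel --- is again $\Delta$-definable. This is guaranteed by the two closure properties above, so none of the computations of \cite{KrPi} need be redone: once these closure checks are in place the argument is identical, which is why it may be omitted.
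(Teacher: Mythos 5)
Your proposal is correct and follows essentially the same route as the paper, which likewise justifies the theorem by noting that the class of $\Delta$-definable $G$-flows is closed under products and under quotients by closed $G$-invariant equivalence relations, and then transports the proof of \cite[Theorem 2.5]{KrPi} verbatim. The additional detail you supply about the structure of that transported argument is consistent with what the paper leaves implicit.
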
 

The following fact is folklore in general topology, but we give a justification.

\begin{fct}\label{fact: from Engelking}
If $f\colon X \to Y$ is a continuous epimorphism, where $X$ is a second-countable (i.e. with a countable basis of open sets) space and $Y$ is a compact, Hausdorff space, then $Y$ is also second-countable.
\end{fct}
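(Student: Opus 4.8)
The plan is to exploit the crucial difference between a \emph{base} and a \emph{network}: the continuous image of a base need not be a base (since $f$ need not be open), but it does yield a network, and for a compact Hausdorff space a countable network can always be upgraded to a countable base. This detour is forced, and it is where compactness of $Y$ enters decisively.

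First I would fix a countable base $\{B_n : n \in \N\}$ of $X$ and pass to the countable family of closed sets $A_n := \overline{f[B_n]}$ in $Y$. Using that $Y$, being compact Hausdorff, is regular, I claim $\{A_n : n \in \N\}$ is a \emph{closed network}, i.e. for every $y$ and open $U \ni y$ there is $n$ with $y \in A_n \subseteq U$. Indeed, regularity provides an open $W$ with $y \in W \subseteq \overline{W} \subseteq U$; choosing $x$ with $f(x)=y$ (here surjectivity is used) and then $B_n$ with $x \in B_n \subseteq f^{-1}[W]$, continuity gives $f[B_n] \subseteq W$, whence $y \in A_n = \overline{f[B_n]} \subseteq \overline{W} \subseteq U$.

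Next I would manufacture a base out of this network. For each pair $(m,n)$ with $A_m \cap A_n = \emptyset$, normality of $Y$ furnishes disjoint open sets $U_{m,n} \supseteq A_m$ and $V_{m,n} \supseteq A_n$; let $\mathcal U$ be the countable collection of all these $U_{m,n}$ and $V_{m,n}$, and let $\mathcal B_Y$ be the (again countable) family of finite intersections of members of $\mathcal U$. To see $\mathcal B_Y$ is a base, take $y \in W$ with $W$ open. For each $z \in Y \setminus W$, Hausdorffness gives disjoint opens around $y$ and $z$; inserting network elements into them yields indices with $y \in A_{m_z}$, $z \in A_{n_z}$ and $A_{m_z} \cap A_{n_z} = \emptyset$, so $y \in U_{m_z,n_z}$ and $z \in V_{m_z,n_z}$ with these two disjoint.

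Finally I would invoke compactness: the sets $V_{m_z,n_z}$ cover the compact set $Y \setminus W$, so finitely many $V_{m_{z_1},n_{z_1}},\dots,V_{m_{z_k},n_{z_k}}$ already cover it, and then $U := \bigcap_{i=1}^k U_{m_{z_i},n_{z_i}} \in \mathcal B_Y$ satisfies $y \in U \subseteq W$, since $U$ is disjoint from each $V_{m_{z_i},n_{z_i}}$ and hence from their union $\supseteq Y \setminus W$. Thus $\mathcal B_Y$ is a countable base and $Y$ is second-countable. I expect the main obstacle to be conceptual rather than computational: the temptation is to push the base of $X$ forward directly, which fails, and the real content is that routing through a countable network and then recovering a base is possible precisely because $Y$ is compact Hausdorff, so that the separation axioms and finite subcovers do the work.
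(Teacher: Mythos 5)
Your proposal is correct and follows the same route as the paper: push the countable base of $X$ forward to obtain a countable network for $Y$, then use that a compact Hausdorff space with a countable network is second-countable. The only difference is that the paper simply cites Engelking's theorem that $nw(Z)=w(Z)$ for compact Hausdorff $Z$, whereas you prove that step from scratch (taking closures to get a closed network, then using normality and finite subcovers to manufacture a base); your argument for it is correct and makes the proof self-contained.
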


\begin{proof}
If $Z$ is a topological space, then a family ${\mathcal N}$ of subsets of $Z$ is said to be a {\em network} for $Z$ if for every $z \in Z$ and its open neighborhood $U$, there is $N \in {\mathcal N}$ with $z \in N \subseteq U$. The smallest possible cardinality of a network for $Z$ is called the {\em network weight} of $Z$ and is denoted by $nw(Z)$. For every space $Z$ we clearly have $nw(Z) \leq w(Z)$ (where $w(Z)$ is the {\em weight} of $Z$, i.e. the smallest cardinality of a basis of open sets). \cite[Theorem 3.1.19]{En} tells us that if $Z$ is compact, Hausdorff, then $nw(Z)=w(Z)$. 

Now, take a countable basis $\{B_i : i \in \omega\}$ of $X$.  It is clear, by the continuity of $f$, that $\{f[B_i] : i \in \omega \}$ is a network for $Y$. Hence, $w(Y) = nw(Y) \leq \aleph_0$.
\end{proof}

\begin{cor}
If both the language and the model $M$ are countable, then $u\M/H(u\M)$ is a Polish, compact group.
\end{cor}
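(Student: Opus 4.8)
The plan is to exhibit $u\M/H(u\M)$ as a continuous image of a second-countable space and then combine Fact~\ref{fact: from Engelking} with the Urysohn metrization theorem. The starting point is that under the countability assumptions the space $S_{G,\Delta}(M)$ is second-countable: since $\mathcal{L}$ and $M$ are countable, there are only countably many $\Delta$-formulas over $M$ up to equivalence, so the Boolean algebra of relatively $\Delta$-definable over $M$ subsets of $G$ is countable, and hence its Stone space $S_{G,\Delta}(M)$ carries a countable basis of clopen sets.

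Next I would push second-countability down to the universal $\Delta$-definable ambit. The quotient map $S_{G,\Delta}(M) \to S_{G,\Delta}(M)/E_\Delta = G^*/E'_\Delta$ is a continuous surjection onto a compact, Hausdorff space, so by Fact~\ref{fact: from Engelking} the ambit $\mathcal{U} := G^*/E'_\Delta$ is second-countable. Consequently the subset $u\M \subseteq \mathcal{U}$, equipped with the topology inherited from $\mathcal{U}$, is second-countable as well, being a subspace of a second-countable space.

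Finally, recall from the preliminary discussion that the $\tau$-topology on $u\M$ is weaker than the topology inherited from $G^*/E'_\Delta$, so the identity map from $u\M$ with the inherited topology to $u\M$ with the $\tau$-topology is continuous. Composing it with the ($\tau$-continuous) quotient map $u\M \to u\M/H(u\M)$ yields a continuous surjection from the second-countable space $u\M$ (inherited topology) onto $u\M/H(u\M)$. By Theorem~\ref{theorem: gen. Bohr comp.} the target is a compact, Hausdorff group, so Fact~\ref{fact: from Engelking} applies once more and shows that $u\M/H(u\M)$ is second-countable. A compact, Hausdorff, second-countable space is regular, hence metrizable by Urysohn, and a compact metric space is automatically complete and separable, i.e. Polish; since $u\M/H(u\M)$ is moreover a topological group, it is a Polish, compact group, as required.

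The main obstacle I anticipate is the bookkeeping with the two topologies on $u\M$: the conclusion concerns the $\tau$-quotient $u\M/H(u\M)$, whereas second-countability is first obtained for the topology inherited from $\mathcal{U}$. The point that makes this harmless is precisely that the $\tau$-topology is coarser than the inherited one, so second-countability of the finer (inherited) topology transfers along the continuous identity map and survives the passage to the $\tau$-quotient.
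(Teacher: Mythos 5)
Your proposal is correct and follows essentially the same route as the paper: second-countability of $S_{G,\Delta}(M)$ is pushed to the quotient $S_{G,\Delta}(M)/E_\Delta$ via Fact~\ref{fact: from Engelking}, then to $u\M/H(u\M)$ using that the $\tau$-topology is coarser than the inherited one, and compactness plus Hausdorffness yields metrizability. Your factorization of the paper's single continuous map through the identity $(u\M,\text{inherited}) \to (u\M,\tau)$ is just a more explicit phrasing of the same step.
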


\begin{proof}
We know that it is a compact, Hausdorff group, so it remains to show that it is metrizable. For this, it is enough to show that it is second-countable (see \cite[Theorem 4.2.8]{En}). 

We know that $S_{G,\Delta}(M)$ is second-countable, and so is $S_{G,\Delta}(M)/E_\Delta$ (by Fact \ref{fact: from Engelking} and the observations that $S_{G,\Delta}(M)/E_\Delta$ is compact, Hausdorff and is the image of $S_{G,\Delta}(M)$ under a continuous map). 

Since the $\tau$-topology on $u\M$ is weaker than the topology inherited on $u\M$ from  $S_{G,\Delta}(M)/E_\Delta$, we have that for $u\M$ equipped with this inherited topology (and $u\M / H(u\M)$ equipped with its usual quotient topology coming from the $\tau$-topology on $u\M$) the quotient function $u\M \to u\M / H(u\M)$ is continuous. But this inherited topology on $u\M$ has a countable basis (by the second paragraph of the proof). Thus, since  $u\M / H(u\M)$ is compact, Hausdorff, we get that it is second-countable by Fact \ref{fact: from Engelking}.
\end{proof}

This corollary shows an advantage of $u\M/H(u\M)$ computed in the definable category in comparison with the same object computed in the externally definable category (where it does not have to be metrizabe). In \cite{KrPi}, ${G^*}^{00}_M/{G^*}^{000}_M$ is presented as a quotient of a closed subgroup of the group $u\M/H(u\M)$ computed in the externally definable context, and in \cite{KrPiRz}, it was used to get new information on the Borel cardinality of ${G^*}^{00}_M/{G^*}^{000}_M$. If we were able to present ${G^*}^{00}_M/{G^*}^{000}_M$ as a quotient of closed subgroup of the group $u\M/H(u\M)$ computed in the definable context, by the above remark, we would be immediately within a nice descriptive set-theoretic setting, which could simplify some arguments from \cite{KrPiRz} (for the objects that we are considering now) and maybe lead to new results. But in this paper, we only describe connections between $u\M/H(u\M)$ and  $G^*/{G^*}^{00}_M$.

Proposition \ref{proposition: theta is onto} gives us the epimorphism $\hat{\theta} \colon G^*/E'_\Delta \to G^*/{G^*}^{00}_{\Delta,M}$ whose restriction $\theta$ to $u\M$ is also an epimorphism. Using the explicit definition of $\hat{\theta}$, one can adapt the proof of \cite[Theorem 0.1]{KrPi} to get the next theorem. In fact, the proof of (2) is even simpler now, because we do not use the $F_n$'s.

\begin{thm}\label{theorem: factorization by H(uM)}
 Suppose that $u\M$ is equipped with the $\tau$-topology and $u\M/H(u\M)$ -- with the induced quotient topology. Then:
\begin{enumerate}
\item $\theta$ is continuous,
\item $H(u\M) \leq \ker(\theta)$,
\item the formula $p*H(u\M) \mapsto \theta(p)$ yields a well-defined continuous epimorphism $\bar \theta$ from $uM/H(uM)$ to $G^*/{G^*}^{00}_{\Delta,M}$.
\end{enumerate}
In particular, we get the following sequence of continuous epimorphisms
\begin{equation}
			u\M\twoheadrightarrow u\M/H(u\M)\xtwoheadrightarrow{\bar \theta}{}{G^*}/{G^*}^{00}_{\Delta,M}.
		\end{equation}
\end{thm}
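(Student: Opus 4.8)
Suppose $u\M$ is equipped with the $\tau$-topology and $u\M/H(u\M)$ with the induced quotient topology. Then (1) $\theta$ is continuous, (2) $H(u\M)\leq\ker(\theta)$, and (3) $p*H(u\M)\mapsto\theta(p)$ yields a well-defined continuous epimorphism $\bar\theta$ from $u\M/H(u\M)$ to $G^*/{G^*}^{00}_{\Delta,M}$; consequently one obtains the displayed sequence of continuous epimorphisms.

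Let me think about how I would prove this.

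The target group $G^*/{G^*}^{00}_{\Delta,M}$ is a compact Hausdorff group (logic topology), and $\theta$ is already known to be a group epimorphism by the discussion right after Proposition \ref{proposition: theta is onto}. So the entire content is about the *topology*: continuity of $\theta$ with respect to the $\tau$-topology on $u\M$, and then showing $H(u\M)$ sits inside the kernel so that $\theta$ descends.

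For part (1), the natural approach is to unwind the definition of the $\tau$-topology via the operator $p\circ A$. The map $\hat\theta\colon G^*/E'_\Delta\to G^*/{G^*}^{00}_{\Delta,M}$ is continuous for the inherited (logic/quotient) topology by Proposition \ref{proposition: theta is onto}. The issue is that the $\tau$-topology on $u\M$ is *weaker* than the inherited one, so continuity for the coarser topology is the nontrivial direction. The standard trick (as in \cite[Chapter IX]{Gl} and \cite{KrPi}) is to show $\theta$ respects the $\circ$-operation: concretely, I would show that $\theta[u\circ A]\subseteq\cl(\theta[A])$ for $A\subseteq u\M$, where the closure is taken in the compact Hausdorff target. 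Given a point $x\in u\circ A$, there are nets $(x_i)$ in $A$ and $(g_i)$ in $G$ with $g_i\to u$ and $g_ix_i\to x$; applying the continuous homomorphism $\hat\theta$ and using that $g_i/E'_\Delta\mapsto g_i/{G^*}^{00}_{\Delta,M}$ converges to $\hat\theta(u)=u/{G^*}^{00}_{\Delta,M}$, which is the identity since $u$ is idempotent, one gets $\theta(x)=\lim\hat\theta(g_i)\theta(x_i)=\lim\theta(x_i)\in\cl(\theta[A])$. Since the target is regular, this means $\theta$ is continuous for the $\tau$-topology: preimages of closed sets are $\tau$-closed because $\cl_\tau(A)=(u\circ A)\cap u\M$ is mapped into the closure of $\theta[A]$.

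For part (2), once $\theta$ is $\tau$-continuous and the target is Hausdorff, $\ker(\theta)$ is a $\tau$-closed normal subgroup of $u\M$. Since $H(u\M)$ is, by its very definition, the intersection of $\cl_\tau(V)$ over all $\tau$-neighborhoods $V$ of $u$, it is the smallest $\tau$-closed subgroup with the property that $u\M/H(u\M)$ is Hausdorff — equivalently it is contained in every $\tau$-closed normal subgroup whose quotient is Hausdorff. As $G^*/{G^*}^{00}_{\Delta,M}$ is Hausdorff, $\ker(\theta)$ is exactly such a subgroup, whence $H(u\M)\leq\ker(\theta)$; here I would cite the analogue of \cite[Chapter IX, Theorem 1.9]{Gl}, which is what guarantees $u\M/H(u\M)$ is the maximal Hausdorff group quotient. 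Part (3) is then formal: $H(u\M)\leq\ker(\theta)$ gives a well-defined group homomorphism $\bar\theta$ on the quotient, it is onto because $\theta$ is onto, and it is continuous by the universal property of the quotient topology combined with $\tau$-continuity of $\theta$ from (1). The final displayed sequence is the composition of the quotient epimorphism $u\M\twoheadrightarrow u\M/H(u\M)$ with $\bar\theta$.

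The main obstacle I anticipate is part (1): establishing continuity with respect to the coarser $\tau$-topology rather than the inherited topology. The delicate point is that convergence $g_i\to u$ in the definition of $p\circ A$ is convergence of the images $g_i/E'_\Delta$ in $G^*/E'_\Delta$, and one must be careful that applying $\hat\theta$ (continuous for the inherited topology) genuinely yields the $\tau$-continuity of the restriction $\theta$. This is exactly the interplay handled in \cite[Theorem 0.1]{KrPi}, and — as the paper notes — the argument here is in fact cleaner, since in the $\Delta$-definable setting we use $G^*/{G^*}^{00}_{\Delta,M}$ directly and avoid the approximating relations $F_n$ needed in the $G^*/{G^*}^{000}_M$ case. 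Everything else is a routine transcription of the cited arguments, so I would simply invoke the adaptation of \cite[Theorem 0.1]{KrPi} once the compatibility of $\theta$ with $\circ$ is in hand.
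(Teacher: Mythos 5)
Your proposal is correct and follows essentially the route the paper intends: it omits the proof as an adaptation of \cite[Theorem 0.1]{KrPi}, and your argument is exactly that adaptation --- the compatibility $\theta[u\circ A]\subseteq \cl(\theta[A])$ via continuity of $\hat\theta$ and $\hat\theta(u)=e$ gives $\tau$-continuity, and your treatment of (2) via Hausdorffness of $G^*/{G^*}^{00}_{\Delta,M}$ is precisely the simplification the paper alludes to when it says the $F_n$'s are not needed. No gaps.
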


We say that $G$ is {\em $\Delta$-definably strongly amenable} if it has no non-trivial $\Delta$-definable {\em proximal} $G$-flows (i.e. flows in which any two points $x$ and $y$ are {\em proximal} which means that there exists a net $(g_i)$ in $G$ such that $\lim g_i x = \lim g_i y$). This extends the notion of strongly amenable group from \cite{Gl}. For example, \cite[Chapter II, Theorem 3.4]{Gl} tells us that all nilpotent groups are strongly amenable so also $\Delta$-definably strongly amenable.  Now, we significantly generalize Corollary 0.4 from \cite{KrPi} (by dropping the definability of types assumption and by extending the context to the local, $\Delta$-definable one). The same proof as the one from \cite{KrPi} works, once we use Proposition \ref{proposition: Delta-definable Bohr compactification} and (in the final part of the proof) the explicit formula for $*$ obtained in Proposition \ref{proposition: formula for *}.

\begin{thm}\label{theorem: strong amenability}
Suppose $G$ is $\Delta$-definably strongly amenable. Then the natural epimorphism $\bar \theta \colon u\M/H(uM) \to G^*/{G^*}^{00}_{\Delta,M}$ is an isomorphism.
\end{thm}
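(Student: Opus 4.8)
The plan is to exploit the already-established sequence of continuous epimorphisms
\begin{equation*}
u\M \twoheadrightarrow u\M/H(u\M) \xtwoheadrightarrow{\bar\theta} G^*/{G^*}^{00}_{\Delta,M}
\end{equation*}
from Theorem \ref{theorem: factorization by H(uM)}, together with Theorem \ref{theorem: gen. Bohr comp.} identifying $u\M/H(u\M)$ as the $\Delta$-definable generalized Bohr compactification of $G$ and Proposition \ref{proposition: Delta-definable Bohr compactification} identifying $G^*/{G^*}^{00}_{\Delta,M}$ as the $\Delta$-definable Bohr compactification. Since $\bar\theta$ is already a continuous epimorphism of compact Hausdorff groups, to prove it is an isomorphism it suffices to show it is injective, equivalently that $\ker(\bar\theta)$ is trivial. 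The mechanism that forces this is strong amenability: I expect the kernel to give rise to a non-trivial proximal $\Delta$-definable flow unless it collapses.

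First I would follow the structure of the argument for Corollary 0.4 in \cite{KrPi}. The key is that $G^*/{G^*}^{00}_{\Delta,M}$, being a compact Hausdorff group that is a $\Delta$-definable compactification of $G$, is in particular an \emph{equicontinuous} (hence distal) $\Delta$-definable $G$-flow, and in a distal flow the only way two points can be proximal is to be equal. Meanwhile the generalized Bohr compactification $u\M/H(u\M)$ is the universal object mapping onto all $\Delta$-definable flows of a suitable type, and $H(u\M)$ was defined precisely so as to quotient out the ``proximal part''. The idea is that $\bar\theta$ identifies the maximal distal (equicontinuous) factor sitting inside the generalized Bohr compactification, and strong amenability says the proximal directions are degenerate, so nothing is lost when passing to $G^*/{G^*}^{00}_{\Delta,M}$.

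Concretely, I would argue as follows. Consider the universal $\Delta$-definable $G$-ambit $G^*/E'_\Delta$ with minimal left ideal $\M$ and idempotent $u$. Under $\hat\theta$ of Proposition \ref{proposition: theta is onto}, the fibers over $G^*/{G^*}^{00}_{\Delta,M}$ should be shown to be proximal: two points in the same fiber of $\hat\theta$, restricted appropriately, lie in a common proximal cell. One then uses the explicit formula for $*$ from Proposition \ref{proposition: formula for *} to see that any two elements of $u\M$ with the same image under $\theta$ are proximal in $u\M$ viewed as a flow. From here, if $\ker(\bar\theta)$ were non-trivial, one would build a non-trivial $\Delta$-definable proximal $G$-flow as a quotient, contradicting $\Delta$-definable strong amenability; hence $\ker(\bar\theta)$ is trivial and $\bar\theta$ is a continuous bijection of compact Hausdorff groups, therefore an isomorphism.

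The main obstacle, and the step requiring the explicit formula for $*$ flagged in the paragraph preceding the statement, is verifying that the relevant fibers are genuinely proximal \emph{within the $\Delta$-definable category} — that is, that the proximality net $(g_i)$ can be chosen so that the resulting factor flow is itself $\Delta$-definable rather than merely externally definable. This is exactly where the coheir description of $p/E_\Delta * q/E_\Delta = \tp_\Delta(ab/M)/E_\Delta$ does the work: it lets one realize the limit $\lim g_i x = \lim g_i y$ by producing, via finite satisfiability, actual group elements $g_{\delta,\varphi} \in G$ whose translates converge simultaneously, keeping everything inside $S_{G,\Delta}(M)/E_\Delta$. Once proximality is secured inside the correct category, the reduction to strong amenability is routine, and compactness finishes the isomorphism.
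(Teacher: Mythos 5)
Your reduction to injectivity and your identification of the two universal objects are fine, but the engine of your argument fails at its central claim. You assert that ``any two elements of $u\M$ with the same image under $\theta$ are proximal in $u\M$ viewed as a flow.'' This is false whenever $\ker(\theta)\neq\{u\}$, which is the only interesting case since $H(u\M)\leq\ker(\theta)$: two \emph{distinct} elements of the group $u\M$ are never proximal in $\M$. Indeed, suppose $x,y\in u\M$ and $p*x=p*y$ for some $p$ in the ambient semigroup $S_{G,\Delta}(M)/E_\Delta$. Since $\M$ is a left ideal, $p*u\in\M$, so $\gamma:=u*p*u\in u\M$; using $x=u*x$ and $y=u*y$ we get $\gamma*x=\gamma*y$ inside the group $u\M$, whence $x=y$ by cancellation. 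So no net $(g_i)$ can witness proximality of distinct points of $\ker(\theta)$, and the coheir formula of Proposition \ref{proposition: formula for *} cannot produce one. (Note also that $u\M$ is not a $G$-invariant subset of $\M$, so ``$u\M$ viewed as a flow'' already needs reinterpretation; the computation above rules out proximality inside $\M$, which is the only reasonable reading.)

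There is a second, independent gap: the inference ``nontrivial kernel $\Rightarrow$ a nontrivial $\Delta$-definable proximal $G$-flow as a quotient.'' Even if the fibers of some homomorphism of minimal flows were proximal cells, a proximal \emph{extension} of a nontrivial base is not a proximal \emph{flow}, and no quotient construction turns one into the other; moreover, strongly amenable groups do admit nontrivial proximal (e.g.\ almost one-to-one) extensions of minimal flows, so ``proximal fibers plus strong amenability'' would not force the fibers to collapse anyway. The paper's own route (it refers to the proof of \cite[Corollary 0.4]{KrPi}, itself an adaptation of Glasner's Chapter IX argument) is different: the point is that $u\M/H(u\M)$ is a priori only a compact \emph{right topological} group, and strong amenability is used to show that this $\Delta$-definable generalized Bohr compactification (Theorem \ref{theorem: gen. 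Bohr comp.}) is in fact a compactification of $G$ by a compact Hausdorff \emph{topological} group; the universal property of Proposition \ref{proposition: Delta-definable Bohr compactification} then produces a morphism in the opposite direction agreeing with the inverse of $\bar\theta$ on the dense image of $G$, and the explicit formula for $*$ is used only at the end to identify the resulting isomorphism with $\bar\theta$. Your outline would need to be rebuilt along these lines.
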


The theorem implies that for $\Delta$-definably strongly amenable groups, the $\Delta$-definable generalized Bohr compactification is isomorphic with the $\Delta$-definable Bohr compactification.

Let  us finish this section with a comparison of externally definable and definable objects.

Let $\M_{\ext}$ be a minimal left ideal of the universal externally definable $G$-flow $S_{G,{\ext}}(M)$, and let $u_{\ext}\in \M_{\ext}$ be an idempotent. Since each $\Delta$-definable $G$-flow is externally definable, there is a unique epimorphism $\hat{h}\colon (G,S_{G,\ext}(M),\tp_{\ext}(e/M)) \to (G,S_{G,\Delta}(M)/E_\Delta,\tp_\Delta(e/M)/E_\Delta)$. Then $\hat{h}$ is an epimorphism of semigroups. 
Put $\M:=\hat{h}[\M_{\ext}]$ and $u:=\hat{h}[u_{\ext}]$ (so far $\M$ and $u$ were chosen arbitrarily at the beginning, but now we define them in this particular way). We easily get that $\M$ is a minimal left ideal, $u \in \M$ is an idempotent, and $h:=\hat{h}|_{u_{\ext}\M_{\ext}}$ is an epimorphism from $u_{\ext} \M_{\ext}$ to $u\M$.

\begin{rem}
$h$ is continuous, where both Ellis groups are equipped with the $\tau$-topologies.
\end{rem}

\begin{proof}
Let $D \subseteq u\M$ be $\tau$-closed. Let $D'=h^{-1}[D]$. The goal is to show that $D'$ is $\tau$-closed. Take $p \in \cl_\tau(D')$. There are nets $(g_i)$ in $G$ and $(x_i)$ in $D'$ such that $\lim_i g_i =u_{\ext}$ and $\lim_i g_ix_i = p$. Then $\lim_i \hat{h}(g_i) = \hat{h}(u_{\ext})=u$ and $\lim_i \hat{h}(g_i) \hat{h}(x_i)=\lim_i \hat{h}(g_ix_i)=\hat{h}(p)$. Moreover, $\hat{h}(g_i) =g_i/E'_{\Delta}$ and $\hat{h}(x_i) \in D$. Therefore, $h(p)=\hat{h}(p) \in \cl_\tau(D)=D$, hence $p \in D'$.
\end{proof}

By this remark, we see that $h[H(u_{\ext}\M_{\ext})] \leq H(u\M)$. Therefore, $h$ induces a continuous epimorphism from $u_{\ext}\M_{\ext}/H(u_{\ext}\M_{\ext})$ to $u\M/H(u\M)$.

Let $N\succ M$ be an $|M|^+$-saturated elementary extension, and let $S_{G,M}(N)$ be the space of all types in $S_{G}(N)$ finitely satisfiable in $M$. Then $S_{G,\ext}(M)$ can be naturally identified with $S_{G,M}(N)$, which we will be using freely. Newelski was considering the epimorphism $\hat{\theta}_{\ext} \colon S_{G,M}(N) \to G^*/{G^*}^{00}_M$ given by $\hat{\theta}_{\ext}(\tp(a/N))=a/{G^*}^{00}_M$ and conjectured that $\theta_{ext}:=\hat{\theta}_{\ext}|_{u_{\ext}\M_{\ext}}\colon u_{\ext}\M_{\ext} \to {G^*}/{G^*}^{00}_M$ is an isomorphism (at least  in nice situations), e.g. see the comment after \cite[Proposition 4.4]{Ne1}. In general, such a conjecture is false, but it turned out to be true for definably amenable groups in NIP theories \cite[Theorem 5.6]{ChSi}. Using this result, we easily get that the same is true in our definable category.

\begin{cor}\label{corollary: from ChSi}
Assume we are in the definable case (i.e. $\Delta$ consists of all formulas of the appropriate form).
If $G$ is definable amenable and $T:=\Th(M)$ has NIP, then $\theta\colon u\M \to G^*/{G^*}^{00}_M$ is an isomorphism.
\end{cor}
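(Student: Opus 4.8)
The plan is to reduce the statement to \cite[Theorem 5.6]{ChSi} via the epimorphism $h\colon u_{\ext}\M_{\ext}\to u\M$ constructed just above, by showing that $\theta$, $h$, and $\theta_{\ext}$ fit into a commutative triangle $\theta\circ h=\theta_{\ext}$. Once this triangle is in place, everything follows by a formal diagram chase, using that $\theta$ is already known to be a group epimorphism (Proposition \ref{proposition: theta is onto} and the remark following it, with ${G^*}^{00}_{\Delta,M}={G^*}^{00}_M$ in the definable case).

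First I would identify the map $\hat h\colon S_{G,\ext}(M)\to S_G(M)/E$ explicitly. Under the identification $S_{G,\ext}(M)\cong S_{G,M}(N)$, the restriction map $r\colon S_{G,M}(N)\to S_G(M)$ given by $\tp(a/N)\mapsto\tp(a/M)$ is a continuous homomorphism of $G$-ambits, and composing it with the quotient map $S_G(M)\to S_G(M)/E$ yields a homomorphism of $G$-ambits sending $\tp_{\ext}(e/M)$ to $\tp(e/M)/E$. By the uniqueness of a homomorphism from the universal externally definable $G$-ambit to the universal definable $G$-ambit, this composition is exactly $\hat h$; hence $\hat h(\tp(a/N))=\tp(a/M)/E$, which under the identification $S_G(M)/E\cong G^*/E'$ equals $a/E'$. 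With this in hand, the key computation is, for $\tp(a/N)\in S_{G,M}(N)$,
$$\hat\theta(\hat h(\tp(a/N)))=\hat\theta(a/E')=a/{G^*}^{00}_M=\hat\theta_{\ext}(\tp(a/N)).$$
Restricting to $u_{\ext}\M_{\ext}$ then gives $\theta\circ h=\theta_{\ext}$.

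Next I would invoke \cite[Theorem 5.6]{ChSi}: since $G$ is definably amenable and $T$ has NIP, the map $\theta_{\ext}\colon u_{\ext}\M_{\ext}\to G^*/{G^*}^{00}_M$ is an isomorphism, in particular injective. As $\theta$ is already an epimorphism, it remains only to prove injectivity. Given $x,y\in u\M$ with $\theta(x)=\theta(y)$, use the surjectivity of $h$ to write $x=h(p)$ and $y=h(q)$ for some $p,q\in u_{\ext}\M_{\ext}$; then $\theta_{\ext}(p)=\theta(h(p))=\theta(h(q))=\theta_{\ext}(q)$, so $p=q$ by injectivity of $\theta_{\ext}$, whence $x=y$. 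Thus $\theta$ is a bijective group homomorphism, hence an isomorphism.

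The only genuinely delicate point is the identification of $\hat h$ and the resulting verification of the triangle $\theta\circ h=\theta_{\ext}$; the rest is purely formal. The hard part will be ensuring that the representative $a$ realizing $\tp(a/N)$ is used coherently in all three maps, so that the evaluations of $\hat\theta_{\ext}$ and of $\hat\theta\circ\hat h$ truly agree on the nose rather than merely up to the relevant quotients.
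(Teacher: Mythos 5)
Your argument for the case $\M=\hat{h}[\M_{\ext}]$ and $u=\hat{h}[u_{\ext}]$ is correct and is essentially the paper's own first step: identify $\hat h$ as $\tp(a/N)\mapsto a/E'$ (the paper states this without your uniqueness-of-ambit-homomorphisms justification, which is a fine way to verify it), deduce $\hat\theta_{\ext}=\hat\theta\circ\hat h$ and hence $\theta_{\ext}=\theta\circ h$, and then conclude injectivity of $\theta$ from injectivity of $\theta_{\ext}$ together with surjectivity of $h$. Your worry about coherence of the representative $a$ is unfounded: all three maps are evaluated on the same realization $a$ of the type in question, and each is well defined on the relevant quotient, so the displayed chain of equalities holds on the nose.

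What you are missing is that the paper does not stop there: its proof has a second part treating an \emph{arbitrary} minimal left ideal $\M$ and idempotent $u\in\M$, not just the particular pair $\hat{h}[\M_{\ext}]$, $\hat{h}[u_{\ext}]$ fixed in the preceding discussion. This is a genuine extra step, because Fact \ref{Ellis theorem}(iv) only gives an abstract isomorphism between the various Ellis groups, and one needs an isomorphism compatible with $\hat\theta$. The paper produces it explicitly: taking an idempotent $v\in\M$ with $vu_0=u_0$ and $u_0v=v$ (where $\M_0=\hat h[\M_{\ext}]$, $u_0=\hat h[u_{\ext}]$), the map $f(x)=vxu_0$ is a group isomorphism from $u\M$ to $u_0\M_0$ satisfying $\theta=\theta_0\circ f$ (using that $\hat\theta$ is a semigroup homomorphism killing idempotents), whence $\theta$ is an isomorphism for the arbitrary choice as well. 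If one reads the corollary as being only about the specific $\M$ and $u$ fixed just before it, your proof is complete; but the paper's proof makes clear the statement is intended for any choice, and for that you would need to supply this last reduction.
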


\begin{proof}
Consider first the case as above, namely with  $\M:=\hat{h}[\M_{\ext}]$ and $u:=\hat{h}[u_{\ext}]$.
Note that $\hat{h}(\tp(a/N))= a/E'$ for any $\tp(a/N) \in S_{G,M}(N)$. Therefore, using the definitions of $\hat{\theta}$ and $\hat{\theta}_{\ext}$, we get $\hat{\theta}_{\ext}=\hat{\theta} \circ \hat{h}$. Hence, $\theta_{\ext}=\theta \circ h$. On the other hand, \cite[Theorem 5.6]{ChSi} tells us that $\theta_{\ext}$ is an isomorphism, and, by the above observations, we know that $h$ is an epimorphism. Therefore, $\theta$ is an isomorphism.

Now, consider an arbitrary minimal left ideal $\M$ and an idempotent $u \in \M$. Let $\M_0:=\hat{h}[\M_{\ext}]$ and $u_0:=\hat{h}[u_{\ext}]$. By \cite[Chapter I, Proposition 2.5]{Gl}, there is an idempotent $v \in \M$ such that $vu_0=u_0$ and $u_0  v=v$. 
%Then $f \colon u_0\M_0 \to u\M$ given by $f(x)=uxv$ is a group isomorphism (even $\tau$-continuous). Indeed, $f(x)f(y)=uxvuyv=u x (u_0 v) u u_0 y v= ux(vu)u_0 yv=ux(vu_0)yv=ux(u_0y)v=uxyv=f(xy)$, so $f$ is a homomorphism; to see that it is an isomorphism, one should check, by similar computations, that $g \colon u\M \to u_0\M_0$ given by $g(y)=u_0vyu_0 =vyu_0$ is the inverse of $f$. Let $\theta_0 \colon u_0\M_0 \to {G^*}/{G^*}^{00}_M$ be $\hat{\theta}|_{u_0\M_0}$. By Proposition \ref{proposition: theta is onto} and the definitions of $\theta$, $\theta_0$ and $g=f^{-1}$, we get $\theta = \theta_0 \circ f^{-1}$. Indeed, $\theta_0(f^{-1}(y))=\hat{\theta}_0(vyu_0)=\hat{\theta}(v) \hat{\theta}(y) \hat{\theta}(u_0)=\hat{\theta}(y)=\theta(y)$.  By the first paragraph of the proof, $\theta_0$ is an isomorphism. Hence, we conclude that $\theta$ is an isomorphism, too.
Then $f \colon u\M \to u_0\M_0$ given by $f(x)=u_0vxu_0=vxu_0$ is a group isomorphism (even $\tau$-continuous). Indeed, $f(x)f(y)=vxu_0vyu_0=vxvyu_0=vxyu_0=f(xy)$ (the fact that $xv=x$ follows from the fact that $u \in \M=\M v$ (as $v \in \M$), $x \in \M=\M u$ and $v$ is an idempotent), so $f$ is a homomorphism; to see that it is an isomorphism, one should check, by similar computations, that $g \colon u_0\M_0 \to u\M$ given by $g(y)=uyv$ is the inverse of $f$.
 Let $\theta_0 \colon u_0\M_0 \to {G^*}/{G^*}^{00}_M$ be $\hat{\theta}|_{u_0\M_0}$. By Proposition \ref{proposition: theta is onto} and the definitions of $\theta$, $\theta_0$ and $f$, we get $\theta = \theta_0 \circ f$. Indeed, $\theta_0(f(x))=\hat{\theta}_0(vxu_0)=\hat{\theta}(v) \hat{\theta}(x) \hat{\theta}(u_0)=\hat{\theta}(x)=\theta(x)$.  By the first paragraph of the proof, $\theta_0$ is an isomorphism. Hence, we conclude that $\theta$ is an isomorphism, too.	
\end{proof}

Similarly to the externally definable case, also in the $\Delta$-definable category there is a general question about the impact of changing the ground model $M$ on the topological-dynamic invariants $u\M$ and $u\M/H(u\M)$. In particular, if we compute these invariants for a bigger model, does there exist epimorphisms to the corresponding objects for the smaller model? If we assume NIP, is $u\M/H(u\M)$ independent of the choice of $M$?

Another interesting direction concerns some weaker versions of the notion of definable [extremal] amenability that naturally arise in our $\Delta$-definable category, but we leave this for the future. 
%Yet another direction is to develop topological dynamics in model-theoretic terms for topological groups. This is work in progress  with Anand Pillay (note here that when all open subsets of a topological group $G$ are uniformly definable as instances of some formula $\varphi(x;\bar t)$, we can put $\Delta:=\{\varphi(y \cdot x \cdot z;\bar t)\}$, and then the category of (topological) $G$-flows coincides with the category of $\Delta$-definable $G$-flows, so our theory applies). 

\section{Topological-dynamic invariants as Polish structures}\label{section: Polish structures}

In \cite{Kr}, the following notion was introduced.

\begin{dfn}
A {\em Polish structure} is a pair $(G,X)$, where $G$ is a Polish group acting on a set $X$ so that the stabilizer of any singleton is a closed subgroup of $G$. We say that $(G,X)$ is {\em small} if for every $n\in \omega$ there are only countably many orbits on $X^n$ under the action of $G$.
\end{dfn}

In \cite{Kr}, it is assumed that the action of $G$ on $X$ is faithful, but this assumption is  purely cosmetic. The notion of {\em $nm$-independence} was introduced in \cite[Definition 2.2]{Kr}, and it was proven that it has some nice properties (as forking independence in stable or simple theories), but the existence of $nm$-independent extensions requires the assumption of smallness of the Polish structure in question. Then a counterpart of basic stability theory was developed for small Polish structures. In particular,  a counterpart of a superstable structure was introduced and called an {\em $nm$-stable} Polish structure.
The following, particular case of Polish structures was studied deeply in \cite{Kr} and \cite{KrWa}.

\begin{dfn}
i)  A {\em compact $G$-space} is pair $(G,X)$, where $G$ is a Polish group acting continuously on a compact, Hausdorff space $X$.\\
ii) A {\em compact $G$-group} is pair $(G,H)$, where $G$ is a Polish group acting continuously and by automorphisms on a compact, Hausdorff group $H$. 
\end{dfn} 

Various structural theorems on compact groups in the context of small Polish structures were proved in \cite{Kr} and \cite{KrWa}, e.g.

\begin{fct}\label{fact: nm-stability implies nilpotency}
If $(G,H)$ is a small, $nm$-stable compact $G$-group, then $H$ is nilpotent-by-finite.
\end{fct}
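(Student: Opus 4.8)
The plan is to transport the proof for small $nm$-stable profinite groups into the setting of compact $G$-groups, the rank machinery being the main tool. First I would recall that smallness guarantees the existence of $nm$-independent extensions, so that $nm$-independence enjoys symmetry, transitivity and existence, and $nm$-stability means precisely that the $nm$-rank is ordinal-valued on every tuple over every countable parameter set. From $nm$-stability I would extract a chain condition: any descending chain $H=H_0 \geq H_1 \geq \cdots$ of closed $G$-invariant subgroups in which each $H_{i+1}$ has infinite index in $H_i$ must be finite, since the $nm$-rank strictly drops along such a chain. This yields a smallest closed $G$-invariant subgroup $H^{00}$ of finite index, the ``connected component''. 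Because $H^{00}$ has finite index, it suffices to show that $H^{00}$ is nilpotent, so I may assume throughout that $H$ is \emph{connected}, i.e.\ has no proper closed $G$-invariant subgroup of finite index.

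Second, I would run an induction on the $nm$-rank of $H$, stating the theorem for all small $nm$-stable compact $G$-groups. The engine of the induction is the claim that a \emph{connected} such $H$ has an infinite closed $G$-invariant central subgroup $Z \leq Z(H)$. Granting this, $H/Z$ is again a small $nm$-stable compact $G$-group whose $nm$-rank is strictly smaller than that of $H$ (the drop being forced by $Z$ being infinite, closed and $G$-invariant), so by the inductive hypothesis $H/Z$ is nilpotent-by-finite. Pulling back a finite-index nilpotent subgroup $N/Z \leq H/Z$ gives a finite-index $N \leq H$, and since $Z$ is central in $N$, a nilpotent central extension is nilpotent, so $N$ is nilpotent and $H$ is nilpotent-by-finite, as required.

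Third --- and this is where the genuine work lies --- I would produce the central subgroup $Z$ by a generic-element argument. Fix a generic $a \in H$ (of maximal $nm$-rank over the relevant parameters) together with an $nm$-independent generic $b$, and analyze the conjugate $a^b$, the centralizer $C_H(a)$, and the commutator $[a,b]$. The stabilizer theorem for small $nm$-stable Polish structures --- the analogue of the group-configuration/stabilizer theorem --- combined with connectedness should force the family of conjugates of $a$ to be $nm$-indecomposable, so that it generates a closed $G$-invariant subgroup; controlling the rank of that subgroup through the independence calculus should then force $[a,b]$ into a closed $G$-invariant subgroup of strictly smaller rank, and connectedness pushes commutators of independent generics into the center, localizing to an infinite central $Z$. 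The essential point --- and the reason the statement holds for compact $G$-groups while failing for superstable groups in general, where connected groups may be simple --- is that the compact (profinite) structure together with smallness makes the chain condition and the indecomposability arguments terminate, pinning down genuinely central elements.

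I expect the main obstacle to be exactly this last step: establishing a workable stabilizer/indecomposability theorem in the Polish-structure category and verifying that the rank drop from $H$ to $H/Z$ is strict. Everything surrounding it --- the chain condition, the passage to $H^{00}$, and the reduction of nilpotent-by-finite to the existence of an infinite central subgroup --- is a formal induction once the rank calculus is in place.
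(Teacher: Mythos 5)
The first thing to say is that the paper contains no proof of this statement: it is imported as a \emph{Fact} from \cite{Kr} and \cite{KrWa} (the latter reference, ``Small, nm-stable compact $G$-groups'', is precisely the paper that proves it), so the only meaningful comparison is with that literature. Your outline does follow the expected superstable-style template --- pass to a connected component, induct on $nm$-rank, and drive the induction by finding an infinite closed $G$-invariant central subgroup --- and that is indeed the broad shape of the arguments in \cite{Kr} and \cite{KrWa}.

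As a proof, however, there is a genuine gap, and you have located it yourself: the entire content of the theorem sits in the step you describe only programmatically, namely that a connected small $nm$-stable compact $G$-group has an infinite closed $G$-invariant central subgroup. You invoke a ``stabilizer theorem for small $nm$-stable Polish structures'' and an ``$nm$-indecomposability'' calculus as if they were available, but no such machinery exists a priori in the Polish-structure category; building it (on top of the preliminary fact, also needed here, that small compact $G$-groups are automatically profinite, \cite[Corollary 5.9]{Kr}) is exactly what the cited papers do. As you yourself observe, the superstable template cannot be transported verbatim, since connected superstable groups may be simple; so something specific to the compact/profinite setting must enter the central-subgroup argument, and your sketch does not identify what that is --- until it does, the induction has no engine. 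Two further steps are asserted rather than proved and are not routine: (i) the existence of $H^{00}$ requires that the intersection of all closed $G$-invariant subgroups of \emph{finite} index still have finite index, which does not follow from the infinite-index chain condition you extract from the rank; and (ii) the strict drop of $nm$-rank in passing from $(G,H)$ to $(G,H/Z)$, together with the claim that $(G,H/Z)$ is again small and $nm$-stable with comparable rank, rests on Lascar-style inequalities for $nm$-rank under quotients, which in this category are theorems about orbits of the group action rather than formal consequences of the definitions. The purely group-theoretic reductions (a central extension of a nilpotent group is nilpotent, pulling back a finite-index subgroup) are fine.
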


The main motivation to introduce Polish structures was to apply model-theoretic ideas to study purely topological objects. There is a variety of examples of classical small Polish structures, e.g. various compact metric spaces considered with the full group of homeomorphisms are always Polish structures which are often small. On the other hand, it would be interesting to use small Polish structures to get new results in pure model theory. A joint idea with Jan Dobrowolski is to view some spaces of types as Polish structures. For example, if $M$ is a countable first order structure, then $\aut(M)$ is naturally a Polish group and  $(\aut(M),S(M))$ becomes a Polish structure (note that the action of $\aut(M)$ on the type space $S(M)$ is continuous). However, even if $M$ is $\omega$-categorical (in particular, if its theory is small), this Polish structure is not necessarily small (e.g. for the random graph it is not small). However, one can formulate the following conjecture. Note before that a small, complete theory in a countable language has a unique (up to $\cong$) countable saturated model.

\begin{conj}\label{conjecture: chernikov}
Assume that $M$ is a countable, saturated model of a small, NIP theory in a countable language. Let $\Delta$ be a finite set of formulas without parameters. Then:\\
i) $(\aut(M),S(M))$ is a small Polish structure,\\ 
ii) $(\aut(M),S_{\Delta}(M))$ is a small Polish structure.
\end{conj}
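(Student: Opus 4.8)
The plan is to treat the two statements together for the structural part and then reduce (ii) to (i), concentrating the real work on the smallness of $(\aut(M),S(M))$. First I would verify that both pairs are Polish structures, which is routine. Since $M$ and $\lang$ are countable, the Boolean algebra of $M$-definable (resp.\ $\Delta$-definable over $M$) subsets of the relevant sort is countable, so $S(M)$ and $S_\Delta(M)$ are compact, second-countable, hence Polish (metrizable) spaces, and $\aut(M)$, being a closed subgroup of the symmetric group on the universe of $M$, is a Polish group. The action $\sigma\cdot p=\{\varphi(x,\sigma(\bar m)):\varphi(x,\bar m)\in p\}$ is a continuous left action (the preimage of a basic clopen set $[\psi(x,\bar n)]$ is the open set $\bigcup_{\bar n'}\{\sigma:\sigma(\bar n')=\bar n\}\times[\psi(x,\bar n')]$), and the same formula defines a continuous action on $S_\Delta(M)$. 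Continuity of $\sigma\mapsto\sigma\cdot p$ into the Hausdorff space $S(M)$ shows that each point stabilizer is closed, so both $(\aut(M),S(M))$ and $(\aut(M),S_\Delta(M))$ are (compact) Polish structures.

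Next I would reduce (ii) to (i). For each $n$ the coordinatewise restriction map $\rho^n\colon S(M)^n\to S_\Delta(M)^n$ is a continuous, $\aut(M)$-equivariant surjection, since every complete $\Delta$-type extends to a complete type. Equivariance sends orbits onto orbits and surjectivity guarantees that every orbit downstairs is the image of one upstairs, so the number of $\aut(M)$-orbits on $S_\Delta(M)^n$ is at most the number on $S(M)^n$; hence (i) implies (ii). I would also record that the finite local space admits independent leverage: by NIP every $\delta\in\Delta$ has finite VC dimension, so by the Sauer--Shelah bound $|S_\Delta(A)|$ is finite (polynomial in $|A|$) for every finite $A\subseteq M$, which strongly constrains the combinatorics of $\Delta$-types and should make (ii) attackable directly even where (i) is delicate.

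For the smallness of (i) the first step is to pass from types to elements of the monster: choosing $a\models p$ in $\C$, the $\aut(M)$-orbit of $\tp(a/M)$ corresponds to the orbit of $a$ under the group $\aut(\C/\{M\})$ of automorphisms of $\C$ stabilizing $M$ setwise, and likewise for $n$-tuples (now with only coordinatewise matching required). Smallness of $T$ gives, for every finite $A\subseteq M$, only countably many types over $A$, and since $M$ is saturated the $\aut(M)$-orbits of finite tuples from $M$ are exactly the $\emptyset$-types, again countably many. Thus the \emph{finitary part} of the data determining an orbit---the family $(\tp(a/\bar m))_{\bar m}$ read modulo the action of $\aut(M)$ on the parameters---ranges over a countable index set with countable fibres.

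The hard part will be to promote this to an actual count of $\aleph_0$. A priori a coherent assignment of finite-level data could branch in continuum-many ways, and the point of the NIP hypothesis is precisely to forbid this: one must show that NIP (finite VC dimension, equivalently the tameness of $M$-invariant and finitely satisfiable types) forces the set of \emph{consistent} global patterns, taken up to $\aut(M)$, to be countable. Making this precise---presumably through the decomposition of a type over a model in an NIP theory into finitely satisfiable and invariant parts together with a count of global invariant types up to conjugacy, or by analysing the (analytic) orbit equivalence relation on the Polish space $S(M)$ and showing it has countably many classes---is the genuine obstacle, and is the reason the statement is posed as a conjecture rather than as a theorem.
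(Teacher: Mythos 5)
This statement is posed in the paper as a conjecture, and the paper offers no proof of the smallness assertions: it only records that both pairs are Polish structures and that (i) implies (ii), which are exactly the routine facts you verify (by essentially the same arguments). Your proposal is therefore correct as far as it goes and honestly identifies the same genuine open obstacle -- establishing countability of the orbit count under the NIP and smallness hypotheses -- that the paper leaves unresolved.
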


As both pairs are Polish structures, only smallness requires a proof.
Clearly, (i) implies (ii). Artem Chernikov suggested that maybe some ideas from \cite{Sh} could be used to prove (ii). 

As we will see in a moment, this conjecture is very important for potential applications of small Polish structures to say something new about topological-dynamic invariants, but it is also interesting in its own right.\\  

In this section, we explain how to view various invariants as Polish structures. Take the context and notation from previous sections. We start from a corollary of Theorem \ref{theorem: main theorem}.

\begin{cor}
$E_\Delta$ is $\aut(M)$-invariant.
\end{cor}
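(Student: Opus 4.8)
The plan is to transport the question from $S_{G,\Delta}(M)$ to $G^*$ via Theorem \ref{theorem: main theorem} and to exploit the explicit, manifestly $\aut(M)$-equivariant description of $E_\Delta'$ as ${\bar F}_\Delta$. First I would fix $\sigma \in \aut(M)$ and, using the strong homogeneity of the monster model $\C$ (note that $\sigma$, being an automorphism of $M \prec \C$, is a partial elementary map of $\C$ defined on the small set $M$), extend it to some $\hat\sigma \in \aut(\C)$. A direct computation then shows that the natural action of $\sigma$ on $S_{G,\Delta}(M)$ corresponds to the action of $\hat\sigma$ on $G^*$ through $\Delta$-types: if $p=\tp_\Delta(a/M)$, then $\sigma(p)=\tp_\Delta(\hat\sigma(a)/M)$, because for a $\Delta$-formula $\delta(x;\bar c)$ over $M$ one has $\models \delta(\hat\sigma(a);\bar c)$ iff $\models \delta(a;\sigma^{-1}(\bar c))$, and $\sigma^{-1}(\bar c) \in M$. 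Since $p \mapsto [a]_{E_\Delta'}$ (for any $a$ with $\tp_\Delta(a/M)=p$) realizes the identification of $S_{G,\Delta}(M)/E_\Delta$ with $G^*/E_\Delta'$, it suffices to prove that $E_\Delta'$ is invariant under every such $\hat\sigma$.

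By Theorem \ref{theorem: main theorem}, $E_\Delta'={\bar F}_\Delta$, so I would instead show that $\hat\sigma$ preserves ${F_0}_\Delta$, $F_\Delta$ and ${\bar F}_\Delta$ in turn. For ${F_0}_\Delta$, given witnesses $a,b,a_1,b_1$ for ${F_0}_\Delta(\alpha,\beta)$, I claim that $\hat\sigma(a),\hat\sigma(b),\hat\sigma(a_1),\hat\sigma(b_1)$ witness ${F_0}_\Delta(\hat\sigma(\alpha),\hat\sigma(\beta))$: condition (1) holds because $\hat\sigma$ respects the $\emptyset$-definable multiplication, condition (2) follows from the type computation above applied to the pairs $(a,a_1)$ and $(b,b_1)$, and condition (3) --- the finite satisfiability of $\tp_\Delta(\hat\sigma(a)/M,\hat\sigma(b))$ in $M$ --- is obtained by pulling a would-be formula $\delta(x;\bar c,\hat\sigma(b))$ back through $\hat\sigma$ to a formula in $\tp_\Delta(a/M,b)$, finding a witness $m \in M$ there, and pushing $\sigma(m) \in M$ forward again. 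Thus ${F_0}_\Delta$ is $\hat\sigma$-invariant, and invariance of its transitive closure $F_\Delta$ is then immediate.

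Finally, for ${\bar F}_\Delta$ --- the finest type-definable over $M$ equivalence relation containing $F_\Delta$ --- I would use that $\hat\sigma$ maps the class of sets type-definable over $M$ bijectively onto itself (its restriction to $M$ is $\sigma$, so parameters stay in $M$). Hence $\hat\sigma({\bar F}_\Delta)$ is again a type-definable over $M$ equivalence relation containing $\hat\sigma(F_\Delta)=F_\Delta$, so ${\bar F}_\Delta \subseteq \hat\sigma({\bar F}_\Delta)$ by minimality; running the same argument with $\sigma^{-1}$ (extended by $\hat\sigma^{-1}$) gives the reverse inclusion, whence $\hat\sigma({\bar F}_\Delta)={\bar F}_\Delta$. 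This yields invariance of $E_\Delta'$, and therefore of $E_\Delta$. The only genuinely delicate points are the preservation of finite satisfiability under $\hat\sigma$ (where the witnesses must be pushed back into $M$) and the two-sided minimality argument needed to promote invariance from $F_\Delta$ to its type-definable hull ${\bar F}_\Delta$; everything else is routine bookkeeping with the $\Delta$-type correspondence.
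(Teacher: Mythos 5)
Your proof is correct and follows essentially the same route as the paper's: transfer the question to $E_\Delta'$ on $G^*$, invoke Theorem \ref{theorem: main theorem} to replace $E_\Delta'$ by ${\bar F}_\Delta$, and reduce successively to the invariance of $F_\Delta$ and then of ${F_0}_\Delta$ under automorphisms of $\C$ fixing $M$ setwise. The paper simply asserts that items (1)--(3) of the definition of ${F_0}_\Delta$ are "clearly preserved" and that the minimality of ${\bar F}_\Delta$ does the rest, whereas you spell out the finite-satisfiability transfer and the two-sided minimality argument; these are correct expansions of the same steps.
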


\begin{proof}
This is equivalent to the statement that $E'_{\Delta}$ is $\aut(\C/\{ M\})$-invariant. By Theorem \ref{theorem: main theorem}, $E'_{\Delta}={\bar F}_\Delta$. Since ${\bar F}_\Delta$ is the finest type-definable over $M$ equivalence relation containing $F_\Delta$, it is enough to show that $F_\Delta$ is $\aut(\C/\{M\})$-invariant. Since $F_\Delta$ is the transitive closure of ${F_0}_\Delta$, this boils down to showing that ${F_0}_\Delta$ is $\aut(\C/\{ M\})$-invariant. But this follows immediately from the definition of ${F_0}_\Delta$, namely, items (1)-(3) from the definition of ${F_0}_\Delta$ are clearly preserved under all automorphisms which fix $M$ setwise. 
\end{proof}

Thus, $\aut(M)$ acts on $S_{G,\Delta}(M)/E_\Delta$ in the natural way, namely $f (p/E_\Delta):=f(p)/E_\Delta$; denote this action by $\Phi \colon \aut(M) \times S_{G,\Delta}(M)/E_\Delta \to S_{G,\Delta}(M)/E_\Delta$.

\begin{prop}\label{proposition: the action of aut(M) is continuous}
$\Phi$ is continuous.
\end{prop}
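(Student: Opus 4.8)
The plan is to lift everything to the type space $S_{G,\Delta}(M)$ itself, prove continuity there by a direct computation with the Stone and pointwise-convergence topologies, and then descend to the quotient by a formal argument. Write $\alpha\colon \aut(M)\times S_{G,\Delta}(M)\to S_{G,\Delta}(M)$ for the natural action $\alpha(f,p)=f(p)$, where $f(p)=\{\varphi(x;f(\bar m)):\varphi(x;\bar m)\in p\}$ is obtained by applying $f$ to the parameters of the formulas in $p$; since the formulas in $\Delta$ carry no parameters and automorphisms of $M$ map $M$ to $M$, this is a well-defined action on $S_{G,\Delta}(M)$, and $\Phi$ is exactly the action it induces on $S_{G,\Delta}(M)/E_\Delta$ (well-defined by the preceding corollary). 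Writing $\pi\colon S_{G,\Delta}(M)\to S_{G,\Delta}(M)/E_\Delta$ for the quotient map, we have $\Phi\circ(\id_{\aut(M)}\times\pi)=\pi\circ\alpha$.

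First I would prove that $\alpha$ is continuous. Fix $(f_0,p_0)$ and a basic clopen neighbourhood $[\varphi]$ of $\alpha(f_0,p_0)=f_0(p_0)$, where $\varphi=\varphi(x;\bar m)$ is a $\Delta$-formula over $M$; that $f_0(p_0)\in[\varphi]$ means $\varphi(x;\bar m_0)\in p_0$ for $\bar m_0:=f_0^{-1}(\bar m)$. Put $U:=\{f\in\aut(M):f(\bar m_0)=\bar m\}$, a basic open neighbourhood of $f_0$, and $V:=[\varphi(x;\bar m_0)]$, a clopen neighbourhood of $p_0$ (note that $\varphi(x;\bar m_0)$ is again a $\Delta$-formula over $M$ since $\bar m_0\in M$). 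For $f\in U$ one has $f^{-1}(\bar m)=\bar m_0$, so for $(f,p)\in U\times V$ we get $\varphi\in f(p)$ iff $\varphi(x;\bar m_0)\in p$, which holds; thus $\alpha(U\times V)\subseteq[\varphi]$, proving continuity of $\alpha$, and hence of $\pi\circ\alpha=\Phi\circ(\id_{\aut(M)}\times\pi)$.

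It remains to transfer continuity from $\pi\circ\alpha$ to $\Phi$, i.e.\ to see that $\id_{\aut(M)}\times\pi$ is a quotient map. This is the one delicate point, because $\aut(M)$ is not locally compact in general, so one cannot invoke the naive ``product of a quotient map with the identity'' principle. Instead I would use that $\pi$ is a \emph{perfect} map: since $E_\Delta$ is closed, $S_{G,\Delta}(M)/E_\Delta$ is compact Hausdorff, and $\pi$ is a continuous surjection from a compact space onto a Hausdorff space (hence closed) whose fibres are the $E_\Delta$-classes (hence compact). By Engelking's theorem that a Cartesian product of perfect mappings is perfect \cite[Theorem 3.7.9]{En}, applied to $\pi$ and to $\id_{\aut(M)}$, the map $\id_{\aut(M)}\times\pi$ is perfect, in particular a closed continuous surjection and therefore a quotient map. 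Since $\Phi\circ(\id_{\aut(M)}\times\pi)=\pi\circ\alpha$ is continuous and $\id_{\aut(M)}\times\pi$ is a quotient map, the universal property of quotient maps yields that $\Phi$ is continuous.

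The main obstacle is precisely this last descent: one must avoid assuming local compactness or metrizability of $\aut(M)$ (the latter fails for uncountable $M$), and the perfect-map argument — resting on the compactness of the $E_\Delta$-classes, i.e.\ on $E_\Delta$ being closed — is what makes it go through in full generality. Everything else is a routine unravelling of the Stone topology and of the pointwise-convergence topology on $\aut(M)$.
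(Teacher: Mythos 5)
Your proof is correct, but it takes a different route from the paper's. The paper's proof is a one-line direct computation entirely downstairs: it uses the fact (established in the proof of Proposition \ref{proposition: formula for *}) that the sets $U_\varphi=\{p/E_\Delta : [p]_{E_\Delta}\subseteq[\varphi]\}$ form a basis of the quotient topology, and then exhibits $\Phi^{-1}[U_\varphi]$ explicitly as $\bigcup_{\bar m'\in o(\bar m)}\{f\in\aut(M):f(\bar m')=\bar m\}\times U_{\varphi(x,\bar m')}$, a union of open boxes indexed by the $\aut(M)$-orbit of the parameters of $\varphi$ (the verification uses the $\aut(M)$-invariance of $E_\Delta$, so that $[f(p)]_{E_\Delta}=f\bigl([p]_{E_\Delta}\bigr)$). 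You instead prove continuity of the lifted action $\alpha$ on $S_{G,\Delta}(M)$ --- which is the same Stone-topology computation in disguise --- and then descend through $\pi$. Your descent is the genuinely different (and more delicate) part: you correctly recognize that $\id_{\aut(M)}\times\pi$ is not automatically a quotient map, since $\aut(M)$ need not be locally compact, and you repair this with the theorem that products of perfect maps are perfect, using that $\pi$ is a closed surjection with compact fibres because $E_\Delta$ is closed. This is sound and fully general. What the paper's approach buys is brevity and independence from any general-topology machinery, at the cost of having to know a concrete basis for the quotient topology; what yours buys is a cleanly separated, reusable factorization argument (continuity upstairs plus a quotient-map descent) that would apply verbatim to any closed $\aut(M)$-invariant equivalence relation without redescribing the quotient topology.
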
 

\begin{proof}
A basic open set in $S_{G,\Delta}(M)/E_\Delta$ is of the form $U_\varphi:=\{ p/{E_\Delta} : [p]_{E_\Delta} \subseteq [\varphi]\}$ for a $\Delta$-formula $\varphi=\varphi(x,\bar m)$ with parameters $\bar m$ from $M$. Let $o(\bar m)$ be the orbit of $\bar m$ under $\aut(M)$. We compute 
$$\Phi^{-1}[U_\varphi]= \bigcup_{{\bar m}' \in o(\bar m)} \{ f \in \aut(M): f({\bar m}')=\bar m\} \times U_{\varphi(x,{\bar m}')},$$ 
which is clearly open in  $\aut(M) \times S_{G,\Delta}(M)/E_\Delta$. 
\end{proof}

\begin{prop}\label{proposition: the action preserves *}
The action $\Phi$ preserves $*$. 
\end{prop}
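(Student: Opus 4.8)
The plan is to reduce everything to the explicit formula for $*$ obtained in Proposition \ref{proposition: formula for *}, exploiting the fact that every $f \in \aut(M)$ extends to an automorphism $\bar f \in \aut(\C/\{M\})$ of the monster model fixing $M$ setwise (such an extension exists since $M \prec \C$ and $\C$ is homogeneous). This extension realizes the action $\Phi$ on the level of the monster model, in the sense that $f(\tp_{\Delta}(c/M)) = \tp_{\Delta}(\bar f(c)/M)$ for all $c \in G^*$.

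First I would fix $p,q \in S_{G,\Delta}(M)$, choose $b \models q$ and $a$ realizing a $\Delta$-coheir extension of $p$ over $M,b$ (so $\tp_{\Delta}(a/M,b)$ is finitely satisfiable in $M$), so that by Proposition \ref{proposition: formula for *} we have $p/E_\Delta * q/E_\Delta = \tp_{\Delta}(ab/M)/E_\Delta$. Applying $f$ through $\bar f$, and using that group multiplication is $\emptyset$-definable so that $\bar f(ab)=\bar f(a)\bar f(b)$, this gives $f(p/E_\Delta * q/E_\Delta) = \tp_{\Delta}(\bar f(a)\bar f(b)/M)/E_\Delta$.

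Second, I would verify that $\bar f(a)$ and $\bar f(b)$ are legitimate witnesses for computing $f(p/E_\Delta) * f(q/E_\Delta)$ via the same proposition. Plainly $\bar f(b) \models f(q)$ and $\bar f(a) \models f(p)$. The one point requiring care is that $\tp_{\Delta}(\bar f(a)/M,\bar f(b))$ is again finitely satisfiable in $M$: here is where $\bar f$ fixing $M$ setwise is used. Any $\Delta$-formula in $\tp_{\Delta}(\bar f(a)/M,\bar f(b))$ is the $\bar f$-image $\delta(x,\bar f(b),f(\bar m))$ of some $\delta(x,b,\bar m) \in \tp_{\Delta}(a/M,b)$ with $\bar m$ from $M$, and if the latter is satisfied by some $g \in M$ then the former is satisfied by $f(g) \in M$. (By the convention of Section \ref{section: definable ambit}, the relevant parameters over $M,b$ are products $b\cdot g \in \dcl(M,b)$ with $g \in G$, and since $\bar f(b\cdot g)=\bar f(b)\cdot g$ for $g \in G \subseteq M$, this causes no difficulty.) Hence $\bar f(a)$ realizes a $\Delta$-coheir extension of $f(p)$ over $M,\bar f(b)$, and Proposition \ref{proposition: formula for *} yields $f(p/E_\Delta) * f(q/E_\Delta) = \tp_{\Delta}(\bar f(a)\bar f(b)/M)/E_\Delta$.

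Comparing the two computations gives $f(p/E_\Delta * q/E_\Delta) = f(p/E_\Delta) * f(q/E_\Delta)$, which is exactly the claim. The main (and essentially only) obstacle is the preservation of the finite-satisfiability condition under $\bar f$; once that is in hand, the rest is a direct unwinding of the formula for $*$ together with the fact that $\bar f$ respects the group operation.
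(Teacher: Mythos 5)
Your proof is correct and follows essentially the same route as the paper: apply the explicit formula for $*$ from Proposition \ref{proposition: formula for *}, extend $f$ to $\bar f \in \aut(\C)$, and observe that $\bar f(a), \bar f(b)$ witness the same formula for $f(p)/E_\Delta * f(q)/E_\Delta$ because finite satisfiability in $M$ is preserved. The paper simply asserts this last preservation step, whereas you spell out the (correct) verification; otherwise the arguments coincide.
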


\begin{proof}
Consider any $f \in \aut(M)$ and $p,q \in S_{G,\Delta}(M)$. By Proposition \ref{proposition: formula for *}, 
$$p/E_\Delta * q/E_\Delta=\tp_{\Delta}(ab/M),$$ 
where $b \models q$ and $a$ satisfies a $\Delta$-coheir extension of $p$ over $M,b$. Let $\bar{f}$ be an extension of $f$ to an automorphism of $\C$. Then $f(p) = \tp_\Delta({\bar f}(a)/M)$, $f(q)=\tp_\Delta({\bar f}(b)/M)$, and $\tp_\Delta({\bar f}(a)/M,{\bar f}(b))$ is finitely satisfiable in $M$. Therefore,
$$\begin{array}{lll}
f(p/E_\Delta) * f(q/E_\Delta) &= & f(p)/E_\Delta * f(q)/E_\Delta=\tp_\Delta({\bar f} (a){\bar f} (b)/M)/E_\Delta\\
&= & \tp_\Delta ({\bar f}(ab)/M)/E_\Delta= f(\tp_\Delta(ab/M)/E_\Delta)\\
&=& f(p/E_\Delta * q/E_\Delta).
\end{array}$$
\end{proof}

Let $\M$ be a minimal left ideal in $S_{G,\Delta}(M)/E_\Delta$ and $u \in \M$ an idempotent. We will need the following observation \cite[Chapter IX, Lemma 1.5]{Gl}.

\begin{fct}\label{fact: lim i tau-lim}
If $(p_i)$ is a net in $u\M$ converging (in the usual topology on $\M$) to $p$, then $\tau\mbox{-}\lim_i p_i= up$.
\end{fct}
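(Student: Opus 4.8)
The plan is to verify the statement directly from the definition of $\cl_\tau$ rather than to reproduce Glasner's argument verbatim. Since $\M$ is closed in $G^*/E'_\Delta$, the usual limit $p$ lies in $\M$, and as $up=u*p\in u*\M=u\M$, the proposed value $up$ indeed lies in the space $u\M$ carrying the $\tau$-topology. I would prove the statement in two steps: first that $up$ belongs to the $\tau$-closure of every tail of $(p_i)$ (so that $up$ is a $\tau$-cluster point of the net, and of each of its subnets), and then upgrade ``cluster point'' to ``limit'' using that the $\tau$-topology is generated by the monotone closure operator $\cl_\tau$.

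First I would fix a tail $A=\{p_i : i\geq i_0\}$ and show $up\in u\circ A$, whence $up\in \cl_\tau(A)=(u\circ A)\cap u\M$. By density of $G/E'_\Delta$ in $G^*/E'_\Delta$ (Remark \ref{remark: denseness very basic}), choose a net $(g_\nu)$ in $G$ with $g_\nu\to u$. For each fixed $\nu$ the map $x\mapsto g_\nu x$ is a homeomorphism of $G^*/E'_\Delta$, so $g_\nu p_i\to g_\nu p$ as $i$ ranges over the tail; on the other hand, right multiplication $x\mapsto x*p$ is continuous because $*$ is continuous in the left coordinate, so $g_\nu p\to up$ as $\nu\to\infty$. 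Combining these two continuities by a standard diagonal argument produces the witnessing nets demanded by the definition of $u\circ A$: for every neighborhood $W$ of $u$, every neighborhood $U$ of $up$, and every $i_0$, I would first pick $\nu$ with $g_\nu\in W$ and $g_\nu p\in U$ (possible since $g_\nu\to u$ and $g_\nu p\to up$), and then, using $g_\nu p_i\to g_\nu p$ for this fixed $\nu$, pick $i\geq i_0$ with $g_\nu p_i\in U$. Indexing by the triples $(W,U,i_0)$ yields a net in $G$ converging to $u$ and a net in $A$ whose products converge to $up$, as required. Since this argument used only that the net lies in $u\M$ and converges in the usual topology to $p$, it applies equally to every subnet of $(p_i)$.

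It then remains to pass from cluster point to limit. Suppose $up$ were not the $\tau$-limit of $(p_i)$; then there is a $\tau$-closed set $D\subseteq u\M$ with $up\notin D$ such that $p_i\in D$ cofinally. The corresponding subnet lies entirely in $D$ and still converges in the usual topology to $p$, so by the previous step, applied to this subnet, $up\in \cl_\tau(T)\subseteq \cl_\tau(D)=D$ for a suitable tail $T\subseteq D$ of the subnet, contradicting $up\notin D$. Hence every $\tau$-neighborhood of $up$ eventually contains $p_i$, i.e. $\tau\mbox{-}\lim_i p_i=up$. The main obstacle I anticipate is the bookkeeping in the first step: one has an iterated limit $\lim_\nu\lim_i g_\nu p_i=up$ and must realize it as genuine nets fitting the definition of $u\circ A$, which forces the neighborhood-indexed diagonal construction above. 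The conceptual engine is simply the coexistence of two continuities (left translations by elements of $G$ are homeomorphisms, while right translation by $p$ is continuous by left-continuity of $*$); everything else — that $p\circ A$ is closed, that the $\tau$-topology is compact and $T_1$, and that $\cl_\tau$ is monotone — is already recorded in the text and is used only formally.
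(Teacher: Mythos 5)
Your proof is correct. The paper offers no argument of its own for this fact --- it is imported verbatim from Glasner \cite[Chapter IX, Lemma 1.5]{Gl} --- and your two-step argument (showing $up\in u\circ A$ for every tail $A$ of every subnet by diagonally combining the continuity of left translation by a fixed $g\in G$ with the left-continuity of $*$ applied to $x\mapsto x*p$, then upgrading the resulting ``$\tau$-cluster point of every subnet'' statement to $\tau$-convergence via monotonicity of $\cl_\tau$ and the identification of $\tau$-closed sets with $\cl_\tau$-fixed sets) is precisely the standard proof of Glasner's lemma, correctly transplanted to $G^*/E'_\Delta$; all the inputs you invoke (closedness of $\M$ so that $p\in\M$, density of $G/E'_\Delta$, and that $\cl_\tau$ is a genuine closure operator) are available in the $\Delta$-definable setting, as the paper records.
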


Let $\aut(M/u)$ be the stabilizer of $u$ under the action $\Phi$. By Proposition \ref{proposition: the action of aut(M) is continuous} and the fact that $S_{G,\Delta}(M)/E_\Delta$ is Hausdorff, $\aut(M/u)$ is a closed subgroup of $\aut(M)$.

\begin{prop}
The action $\Phi$ induces a $\tau$-continuous action of $\aut(M/u)$ on $u\M$.
\end{prop}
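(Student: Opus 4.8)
The plan is to establish three things in turn: that $\aut(M/u)$ leaves the group $u\M$ invariant, that each individual element of $\aut(M/u)$ acts on $u\M$ as a $\tau$-homeomorphism, and finally that the resulting action is jointly $\tau$-continuous. For the first point, write $S := S_{G,\Delta}(M)/E_\Delta$ and recall (from Fact \ref{Ellis theorem} together with the fact that $e/E'_\Delta$ is a two-sided identity for $*$) that the minimal left ideal satisfies $\M = S * u$, whence $u\M = u * S * u$. Any $f \in \aut(M/u)$ is a bijection of $S$ which preserves $*$ (Proposition \ref{proposition: the action preserves *}) and fixes $u$, so $f(u\M) = f(u) * f(S) * f(u) = u * S * u = u\M$; in particular $f(\M) = \M$ as well. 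Thus $\Phi$ indeed restricts to an action of $\aut(M/u)$ on $u\M$.

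For the second point I would show that every $f \in \aut(M)$ commutes with the operation $p \circ A$. The key observations are that for $g \in G$ one has $f(g) \in G$ (as $G$ is $\emptyset$-definable), that $f$ fixes the base point $e/E'_\Delta$, and that $f(g \cdot x) = f(g) \cdot f(x)$; since moreover $f$ is a homeomorphism for the usual topology (Proposition \ref{proposition: the action of aut(M) is continuous}), applying $f$ to the nets $(g_i)$ in $G$ and $(x_i)$ in $A$ witnessing $x \in p \circ A$ yields nets witnessing $f(x) \in f(p) \circ f(A)$, and running the same argument for $f^{-1}$ gives $f(p \circ A) = f(p) \circ f(A)$. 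Specializing to $f \in \aut(M/u)$ (so $f(u) = u$ and $f(u\M) = u\M$) and invoking Definition \ref{Def: tau topology} gives $f(\cl_\tau(A)) = (u \circ f(A)) \cap u\M = \cl_\tau(f(A))$, so $f$ and $f^{-1}$ both preserve $\tau$-closed sets; hence each $f$ is a $\tau$-homeomorphism of $u\M$, and $\aut(M/u)$ acts by $\tau$-homeomorphisms.

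For joint $\tau$-continuity of the action map $\mu \colon \aut(M/u) \times (u\M,\tau) \to (u\M,\tau)$, my plan is to route everything through the (finer, compact, Hausdorff) usual topology. The surjection $\pi_u \colon (\M,\text{usual}) \to (u\M,\tau)$ given by $p \mapsto u * p$ is continuous by Fact \ref{fact: lim i tau-lim}, and since every $f \in \aut(M/u)$ preserves $*$ and fixes $u$ one has $u * f(p) = f(u * p)$; consequently $\mu \circ (\id \times \pi_u)$ equals the composite of the usual-continuous restriction of $\Phi$ to $\aut(M/u) \times \M$ with $\pi_u$, and is therefore continuous into $(u\M,\tau)$. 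It then remains to descend continuity from $\mu \circ (\id \times \pi_u)$ to $\mu$ itself. I expect this descent to be the main obstacle: it amounts to $\id_{\aut(M/u)} \times \pi_u$ being a topological quotient map, which is delicate precisely because $\aut(M/u)$, being merely Polish, need not be locally compact, so the product of a quotient map with the identity need not remain a quotient map, and because the $\tau$-topology is only $T_1$ and strictly coarser than the usual one, so that $\tau$-convergence of the space variable carries no direct information in the usual topology and $\tau$-limits need not be unique. The way I would attack it is to use the $\tau$-compactness of $u\M$ together with a careful net argument, passing to usual-convergent subnets and combining Fact \ref{fact: lim i tau-lim} with the usual joint continuity of $\Phi$ (Proposition \ref{proposition: the action of aut(M) is continuous}) and the intertwining identity $u * f(p) = f(u * p)$, to verify the quotient criterion directly in this particular situation rather than appealing to a general product-of-quotients theorem.
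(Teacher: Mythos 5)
Your first two steps are fine: the invariance of $u\M$ under $\aut(M/u)$ is exactly how the paper begins (via $\M=S_{G,\Delta}(M)/E_\Delta * u$ and Proposition \ref{proposition: the action preserves *}), and your observation that each individual $f$ commutes with $p\circ A$ and hence acts as a $\tau$-homeomorphism is correct, though not needed. The gap is in your third step, which is where the entire content of the proposition lies: you assemble precisely the right ingredients (usual-compactness of $\M$, Fact \ref{fact: lim i tau-lim}, the usual joint continuity of $\Phi$, and the intertwining identity $u*f(p)=f(u*p)$) but you stop at announcing a plan, and the plan as framed points in the wrong direction. Reducing joint $\tau$-continuity to the claim that $\id\times\pi_u$ is a topological quotient map is both stronger than necessary and genuinely problematic: $(u\M,\tau)$ is only $T_1$, so even $\pi_u$ alone being a quotient map does not follow from compactness of $\M$ in the usual way, and, as you note yourself, $\aut(M/u)$ need not be locally compact, so no product-of-quotients theorem is available. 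I do not see how to verify your ``quotient criterion'' directly, and the paper never proves any quotient-map statement.

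What the paper does instead is verify net-continuity of the action directly, and this is the missing piece. Given nets $f_i\to f$ in $\aut(M/u)$ and $\tau\mbox{-}\lim_i p_i=p$ in $u\M$, it suffices to show that every subnet of $(f_i(p_i))$ admits a further subnet $\tau$-converging to $f(p)$. By compactness of $\M$ in the usual topology one extracts a subnet with $\lim_k p'_k=p'$ (usual limit); then $\lim_k f'_k(p'_k)=f(p')$ by Proposition \ref{proposition: the action of aut(M) is continuous}, hence $\tau\mbox{-}\lim_k f'_k(p'_k)=uf(p')=f(u)f(p')=f(up')$ by Fact \ref{fact: lim i tau-lim} and Proposition \ref{proposition: the action preserves *}, while Fact \ref{fact: lim i tau-lim} applied to $(p'_k)$ identifies $up'$ with $p$. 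This subnet-of-subnets argument is exactly the ``careful net argument'' you gesture at; until it (or a genuine substitute) is written out, the proof of joint $\tau$-continuity is not complete.
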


\begin{proof}
By Proposition \ref{proposition: the action preserves *} and the fact that $\M= S_{G,\Delta}(M)/E_\Delta *u$, we see that any automorphism $f \in \aut(M/u)$ fixes both $\M$ and $u\M$ setwise. Thus, the action $\Phi$ induces an action of $\aut(M/u)$ on $\M$, and further on $u\M$.
%The fact that $\aut(M/u)$ acts on $u\M$ follows from Proposition \ref{proposition: the action preserves *}. 
Now, we want to show the continuity of this action of $\aut(M/u)$ on $u\M$.

Consider arbitrary nets $(f_i)$ in $\aut(M/u)$ and $(p_i)$ in $u\M$ such that $\lim_i f_i = f \in \aut(M/u)$ and $\tau\mbox{-}\lim_i p_i = p \in u\M$. We need to show that $\tau\mbox{-}\lim_if_i(p_i) = f(p)$. For this it is enough to prove that any subnet $(f_k'(p_k'))$ of $(f_i(p_i))$ has a subnet which is $\tau$-convergent to $f(p)$. Hence, we see that it is enough to show that, for any data as in the first sentence of this paragraph, the net $(f_i(p_i))$ has a subnet which is $\tau$-convergent to $f(p)$.

By the compactness of $\M$, there is a subnet $(p_k')$ of $(p_i)$ converging, in the usual topology on $\M$, to some $p'$, i.e. $\lim_k p_k'=p'$. The corresponding subnet $(f_k')$ of $(f_i)$ still converges to $f$, i.e. $\lim_k f_k'=f$.

By Proposition \ref{proposition: the action of aut(M) is continuous}, $\lim_k f_k'(p_k')=f(p')$. Hence, by Fact \ref{fact: lim i tau-lim} and Proposition \ref{proposition: the action preserves *}, we get $\tau\mbox{-}\lim_k f_k'(p_k')=uf(p')=f(u)f(p')=f(up')$.

On the other hand, by Fact \ref{fact: lim i tau-lim} applied to the net $(p_k)$, we get $p=\tau\mbox{-}\lim_i p_i=\tau\mbox{-}\lim_k p_k'=up'$. 

By the last two paragraphs, $\tau\mbox{-}\lim_k f_k'(p_k')=f(p)$, and the proof is finished.
\end{proof}

\begin{cor}
$\aut(M/u)$ acts continuously on $u\M/H(u\M)$, i.e. on the $\Delta$-definable generalized Bohr compactification of $G$ (see Theorem \ref{theorem: gen. Bohr comp.}).
\end{cor}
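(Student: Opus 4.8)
The plan is to run the standard quotient argument, using the $\tau$-continuous action of $\aut(M/u)$ on $u\M$ just established together with the compactness of $u\M$ in the $\tau$-topology. First I would observe that each $f \in \aut(M/u)$ descends to $u\M/H(u\M)$. Since $\Phi$ preserves $*$ (Proposition \ref{proposition: the action preserves *}) and $f$ fixes $u$, the restriction $f|_{u\M}$ is an automorphism of the group $(u\M,*)$, with inverse $f^{-1}|_{u\M}$, and it is a $\tau$-homeomorphism of $u\M$ fixing $u$. Consequently $f$ permutes the filter of $\tau$-neighborhoods of $u$ and satisfies $f[\cl_\tau(V)] = \cl_\tau(f[V])$, so $f[H(u\M)] = \bigcap_V \cl_\tau(f[V]) = H(u\M)$. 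As $H(u\M)$ is a normal subgroup preserved by $f$, the automorphism $f|_{u\M}$ induces a group automorphism $\bar f$ of $u\M/H(u\M)$ satisfying $q \circ f = \bar f \circ q$, where $q \colon u\M \to u\M/H(u\M)$ is the quotient map. This yields a well-defined action $\bar\Phi$ of $\aut(M/u)$ on $u\M/H(u\M)$, the value $\bar\Phi(f,q(p)) = q(f(p))$ being independent of the representative $p$ because $f$ carries $H(u\M)$-cosets to $H(u\M)$-cosets.

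For continuity of $\bar\Phi$, I would argue with nets, exploiting that $u\M/H(u\M)$ is compact Hausdorff and that $u\M$ is $\tau$-compact. Fix nets $f_i \to f$ in $\aut(M/u)$ and $\bar p_i \to \bar p$ in $u\M/H(u\M)$; it suffices to show that every subnet of $(\bar\Phi(f_i,\bar p_i))$ has a further subnet converging to $\bar\Phi(f,\bar p)$. Passing to such a subnet, choose arbitrary lifts $p_i \in q^{-1}[\{\bar p_i\}]$; by $\tau$-compactness of $u\M$ we may pass to a further subnet along which $p_k$ $\tau$-converges to some $p''$, and then continuity of $q$ forces $q(p'') = \bar p$. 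By the $\tau$-continuity of the action of $\aut(M/u)$ on $u\M$, the net $f_k(p_k)$ $\tau$-converges to $f(p'')$, and applying the continuous map $q$ gives $q(f_k(p_k)) \to q(f(p'')) = \bar f(q(p'')) = \bar f(\bar p) = \bar\Phi(f,\bar p)$. Since $\bar\Phi(f_k,\bar p_k) = q(f_k(p_k))$, this is exactly the required convergence, and continuity follows.

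The only genuine subtlety, and the reason for the subnet-plus-compactness detour, is that $\aut(M/u)$ is a Polish group that need not be locally compact, so the product $\id_{\aut(M/u)} \times q$ of the quotient map with the identity is not known to be a quotient map; one therefore cannot simply transport continuity of $(f,p) \mapsto q(f(p))$ across the quotient by a formal argument. Compactness of $u\M$ in the $\tau$-topology repairs this, by allowing us to lift $\tau$-convergent nets from the quotient and invoke the continuous action upstairs. Everything else --- the preservation of $*$, of the idempotent $u$, and of the subgroup $H(u\M)$ --- is routine given the results already assembled, so the corollary is immediate once these pieces are in place.
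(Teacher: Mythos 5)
Your proof is correct and supplies exactly the standard details that the paper treats as immediate: the paper states this as a corollary with no proof, relying implicitly on the facts you verify (each $f$ is a $\tau$-homeomorphism of $u\M$ fixing $u$, hence preserves $H(u\M)$ and descends to the compact Hausdorff quotient, where continuity follows from $\tau$-compactness of $u\M$ and continuity of the quotient map). No gaps; your net-lifting argument is the right way to handle the descent of continuity.
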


Note that $\aut(M)$ also acts on $G^*/{G^*}^{00}_{\Delta,M}$. Namely, for $f \in \aut(M)$, take any $\bar f \in \aut(\C)$ extending $f$ and define $f \cdot (a/{G^*}^{00}_{\Delta,M}):= {\bar f}(a)/{G^*}^{00}_{\Delta,M}$. (The fact that this action is well-defined follows easily from the observation that ${G^*}^{00}_{\Delta,M}$ is invariant under $\aut(\C/\{ M\})$ and contains all $a^{-1}b$ for $a \equiv_M b$.) 
By a similar argument to the proof of Proposition \ref{proposition: the action of aut(M) is continuous}, one can show 

\begin{prop}
The action of $\aut(M)$ on $G^*/{G^*}^{00}_{\Delta,M}$ is continuous.
\end{prop}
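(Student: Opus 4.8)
The plan is to follow the argument of Proposition~\ref{proposition: the action of aut(M) is continuous} almost verbatim, after fixing a convenient basis of the logic topology on $G^*/{G^*}^{00}_{\Delta,M}$. Write $H:={G^*}^{00}_{\Delta,M}$ and let $\bar\pi\colon G^*\to G^*/H$ be the quotient map. For a formula $\psi(x,\bar m)$ over $M$ put $V_\psi:=\{aH : aH\subseteq \psi(G^*,\bar m)\}$. The first step is to verify that the sets $V_\psi$ form a basis of the logic topology: the complement of $\bar\pi^{-1}[V_\psi]$ equals $\{a : (\exists h\in H)\,\neg\psi(ah,\bar m)\}$, which, being the projection onto the $a$-coordinate of the set $\{(a,h): h\in H \wedge \neg\psi(ah,\bar m)\}$ (type-definable over $M$, as $H$ is), is itself type-definable over $M$; hence each $V_\psi$ is open, and these are precisely the complements of the basic closed sets $\bar\pi[\neg\psi(G^*,\bar m)]$.

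The key model-theoretic observations come next. Denote the action by $\Psi\colon \aut(M)\times G^*/H\to G^*/H$, $\Psi(f,aH)=\bar f(a)H$, where $\bar f\in\aut(\C)$ is an arbitrary extension of $f$. Since $\bar f$ fixes $M$ setwise, it stabilizes $H$ (recall $H$ is invariant under $\aut(\C/\{M\})$ and contains all $a^{-1}b$ with $a\equiv_M b$), so $\bar f(aH)=\bar f(a)H$ and $\Psi$ is well defined. Moreover $\bar f$ transports parameters of $M$ exactly as $f$ does, whence $\bar f^{-1}[\psi(G^*,\bar m)]=\psi(G^*,f^{-1}(\bar m))$.

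With these in place the computation mirrors the one for $\Phi$. For $\psi=\psi(x,\bar m)$ one has $(f,aH)\in\Psi^{-1}[V_\psi]$ iff $\bar f(a)H\subseteq\psi(G^*,\bar m)$ iff $aH\subseteq\bar f^{-1}[\psi(G^*,\bar m)]=\psi(G^*,f^{-1}(\bar m))$ iff $aH\in V_{\psi(\cdot,f^{-1}(\bar m))}$. Letting $\bar m'$ range over the orbit $o(\bar m)$ of $\bar m$ under $\aut(M)$, this gives $\Psi^{-1}[V_\psi]=\bigcup_{\bar m'\in o(\bar m)}\{f\in\aut(M): f(\bar m')=\bar m\}\times V_{\psi(\cdot,\bar m')}$, a union of products of basic open sets, hence open.

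I expect the only genuine work to be the first step, namely pinning down a correct basis of the logic topology and checking that $V_\psi$ is open via the fact that a projection of a type-definable set is type-definable; once that is settled, the rest (that $\bar f$ stabilizes $H$ and moves parameters by $f$, and the passage to the orbit $o(\bar m)$) is exactly as in Proposition~\ref{proposition: the action of aut(M) is continuous}. The one point to get right is the direction of the parameter change, which is $f^{-1}(\bar m)$ and pairs with the condition $f(\bar m')=\bar m$; this is purely formal.
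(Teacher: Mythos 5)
Your proposal is correct and is essentially the paper's intended argument: the paper gives no separate proof here, saying only that one argues "by a similar argument to the proof of Proposition \ref{proposition: the action of aut(M) is continuous}", and your write-up is precisely that adaptation (replacing the basic opens $U_\varphi$ by the sets $V_\psi$, whose openness follows from the projection argument, and using that any $\bar f\in\aut(\C/\{M\})$ stabilizes ${G^*}^{00}_{\Delta,M}$ and moves parameters of $M$ by $f$). The well-definedness point you note is exactly the one the paper records just before the proposition.
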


By the above observations, we get

\begin{cor}\label{corollary: Polish structures}
Assume that the model $M$ is countable.
The following are Polish structures: $(\aut(M), S_{G,\Delta}(M))$, $(\aut(M), S_{G,\Delta}(M)/E_\Delta)$, $(\aut(M/u), u\M)$, $(\aut(M/u), u\M/H(u\M))$, and $(\aut(M), G^*/{G^*}^{00}_{\Delta,M})$. More precisely, all these are compact $G$-spaces (except the third one, which is not necessarily Hausdorff). Moreover, the second one is a compact $\aut(M)$-semigroup with left-continuous semigroup operation, the fourth one is a compact $\aut(M/u)$-group, and the last one is a compact $\aut(M)$-group.
\end{cor}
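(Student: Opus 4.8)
The plan is to verify the claims in Corollary \ref{corollary: Polish structures} by assembling the continuity and algebraic facts already established for each of the five pairs, and then checking in each case that the stabilizer of a singleton is a closed subgroup of the acting Polish group. First I would recall that for $M$ countable, $\aut(M)$ is a Polish group under the topology of pointwise convergence, and likewise $\aut(M/u)$ is Polish, being a closed subgroup of $\aut(M)$ (as noted just before Fact \ref{fact: lim i tau-lim}, using that $S_{G,\Delta}(M)/E_\Delta$ is Hausdorff and the action $\Phi$ is continuous). For the underlying spaces, $S_{G,\Delta}(M)$ is a compact, Hausdorff space (second countable when $M$ is countable), and $S_{G,\Delta}(M)/E_\Delta$ is compact, Hausdorff as the quotient of $S_{G,\Delta}(M)$ by the closed equivalence relation $E_\Delta$. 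The space $u\M$ with the $\tau$-topology is compact and $T_1$ but need not be Hausdorff, which accounts for the parenthetical caveat in the statement; the quotient $u\M/H(u\M)$ is a compact, Hausdorff group by Theorem \ref{theorem: gen. Bohr comp.} and the discussion preceding it, and $G^*/{G^*}^{00}_{\Delta,M}$ is a compact, Hausdorff group in the logic topology.

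Next I would treat continuity of each action separately, citing the propositions that do the real work. For $(\aut(M),S_{G,\Delta}(M))$, continuity of the action on the type space is the standard basic fact about $\aut(M)$ acting on a Stone space; for $(\aut(M),S_{G,\Delta}(M)/E_\Delta)$ it is exactly Proposition \ref{proposition: the action of aut(M) is continuous}; for $(\aut(M/u),u\M)$ it is the $\tau$-continuity established in the proposition immediately preceding the corollary; for $(\aut(M/u),u\M/H(u\M))$ it is the corollary just before the statement; and for $(\aut(M),G^*/{G^*}^{00}_{\Delta,M})$ it is the last displayed proposition. With continuity of the action in hand, the stabilizer of a point is closed in each case: in a Hausdorff compact $G$-space a point-stabilizer $\{f : f\cdot x = x\}$ is the preimage of the closed diagonal point $x$ under the continuous map $f \mapsto f\cdot x$, hence closed; for the third pair one argues instead that $\tau$-continuity of the action together with the $T_1$ property of the $\tau$-topology still makes each singleton $\tau$-closed, so the stabilizer is again a preimage of a closed set and hence closed in $\aut(M/u)$. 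This verifies the defining condition of a Polish structure for all five pairs.

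Finally I would record the extra algebraic structure asserted in the second half of the statement. That $S_{G,\Delta}(M)/E_\Delta$ carries a left-continuous semigroup operation $*$ preserved by $\aut(M)$ is the content of Propositions \ref{proposition: the action preserves *} and the left-continuity built into $*$ via formula (\ref{eq: 0}), so it is a compact $\aut(M)$-semigroup. That $u\M/H(u\M)$ is a compact $\aut(M/u)$-group follows because $\aut(M/u)$ acts by group automorphisms (it preserves $*$ by Proposition \ref{proposition: the action preserves *}, fixes $u$ by definition, and hence acts by automorphisms of the group $u\M$, descending to $u\M/H(u\M)$), combined with the continuity already noted. That $G^*/{G^*}^{00}_{\Delta,M}$ is a compact $\aut(M)$-group follows since the described action by $f\cdot(a/{G^*}^{00}_{\Delta,M}) = {\bar f}(a)/{G^*}^{00}_{\Delta,M}$ is by group automorphisms (as ${G^*}^{00}_{\Delta,M}$ is $\aut(\C/\{M\})$-invariant and normal) and is continuous by the preceding proposition.

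I expect the only genuinely delicate point to be the third pair, $(\aut(M/u),u\M)$, precisely because the $\tau$-topology is merely $T_1$ and not Hausdorff; one must be careful to argue that singletons are $\tau$-closed (which $T_1$ guarantees) rather than appealing to the Hausdorff trick used for the other spaces, and to confirm that this weaker separation still suffices for the stabilizer to come out closed. Everything else is a matter of quoting the already-proven continuity statements and the standard structure of $\aut(M)$ as a Polish group, so the bulk of the proof is bookkeeping rather than new argument.
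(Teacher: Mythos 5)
Your proposal is correct and follows essentially the same route as the paper, which derives the corollary directly from the preceding continuity and preservation propositions (the paper's proof is literally ``by the above observations''). Your extra care with the third pair --- using the $T_1$ property of the $\tau$-topology to get closed singletons and hence closed stabilizers --- is a sensible elaboration of a point the paper leaves implicit.
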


From now on, we always assume that $M$ is countable. In order to apply some knowledge on small Polish structures, first one would have to describe interesting classes of theories for which some of the above Polish structures are small. Conjecture \ref{conjecture: chernikov} may provide such classes. 

Using Proposition \ref{proposition: theta is onto}, we easily get
\begin{rem}
If $(\aut(M), S_{G,\Delta}(M))$ is small, then all other Polish structures from Corollary \ref{corollary: Polish structures} are small, too.
\end{rem}

By Theorem \ref{theorem: factorization by H(uM)}, we easily get

\begin{rem}
If $(\aut(M/u), u\M/H(u\M))$ is small, then $(\aut(M), G^*/{G^*}^{00}_{\Delta,M})$ is small as well.
\end{rem}

There are two kinds of possible applications of small Polish structures. First of all, \cite[Corollary 5.9]{Kr} tells us that small compact $G$-groups are profinite.

\begin{cor}
i) If $(\aut(M/u), u\M/H(u\M))$ is small, then $u\M/H(u\M)$ is a profinite group.\\
ii) If $(\aut(M), G^*/{G^*}^{00}_{\Delta,M})$ is small, then $G^*/{G^*}^{00}_{\Delta,M}$ is a profinite group.
\end{cor}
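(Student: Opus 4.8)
The plan is to deduce both items immediately from the structural result \cite[Corollary 5.9]{Kr} stated just above, namely that every small compact $G$-group is profinite. All the work needed to present the two objects in question as compact $G$-groups has already been carried out in the preceding material: by Corollary \ref{corollary: Polish structures}, the pair $(\aut(M/u), u\M/H(u\M))$ is a compact $\aut(M/u)$-group and the pair $(\aut(M), G^*/{G^*}^{00}_{\Delta,M})$ is a compact $\aut(M)$-group. In particular, in each case the acting Polish group acts continuously and by automorphisms on a compact, Hausdorff group, so the objects are of exactly the type covered by \cite[Corollary 5.9]{Kr}.

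For item (i), I would observe that the added hypothesis is precisely that the Polish structure $(\aut(M/u), u\M/H(u\M))$ is small; combining this with the fact that it is a compact $\aut(M/u)$-group makes it a small compact $G$-group, whence \cite[Corollary 5.9]{Kr} gives that $u\M/H(u\M)$ is profinite. For item (ii) the argument is identical, using $(\aut(M), G^*/{G^*}^{00}_{\Delta,M})$ in place of $(\aut(M/u), u\M/H(u\M))$: under the assumed smallness it is a small compact $\aut(M)$-group, so $G^*/{G^*}^{00}_{\Delta,M}$ is profinite by the same cited result.

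Since the compact $G$-group structures, together with the continuity and the automorphy of the two actions, are already established (the latter following from $\Phi$ preserving $*$, Proposition \ref{proposition: the action preserves *}, in the first case, and from the corresponding preservation of multiplication by $\aut(M)$ on $G^*/{G^*}^{00}_{\Delta,M}$ in the second), there is essentially no obstacle left. The only remaining point is the purely bookkeeping verification that the smallness hypothesis here coincides verbatim with the notion of a small Polish structure used in \cite{Kr}; granting this, each item follows in a single line.
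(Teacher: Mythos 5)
Your proposal is correct and is exactly the argument the paper intends: the corollary is stated as an immediate consequence of \cite[Corollary 5.9]{Kr} applied to the compact $G$-group structures already established in Corollary \ref{corollary: Polish structures}, which is precisely what you do.
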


Secondly, we would like to describe the algebraic structure of $u\M/H(u\M)$ and $G^*/{G^*}^{00}_{\Delta,M}$, but for this we would have to know that the corresponding Polish structures are not only small, but also $nm$-stable (e.g. to apply Fact \ref{fact: nm-stability implies nilpotency}).

\begin{cor}
i) If $(\aut(M/u), u\M/H(u\M))$ is small and $nm$-stable, then $u\M/H(u\M)$ is nilpotent-by-finite.\\
ii) If $(\aut(M), G^*/{G^*}^{00}_{\Delta,M})$ is small and $nm$-stable, then $G^*/{G^*}^{00}_{\Delta,M}$ is nilpotent-by-finite.
\end{cor}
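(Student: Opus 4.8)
The plan is to recognize that both items are immediate consequences of Fact \ref{fact: nm-stability implies nilpotency}, once we invoke the structural identifications already recorded in Corollary \ref{corollary: Polish structures}. In other words, all the genuine work has been done: establishing that the relevant pairs really are compact $G$-groups (carried out in the proofs leading to Corollary \ref{corollary: Polish structures}, via Propositions \ref{proposition: the action of aut(M) is continuous}, \ref{proposition: the action preserves *} and the subsequent continuity results), and proving the abstract structure theorem Fact \ref{fact: nm-stability implies nilpotency} itself (which is quoted from \cite{Kr} and \cite{KrWa}). The corollary is then a matter of checking that the hypotheses of that fact are exactly what we are assuming.

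First I would handle (i). By Corollary \ref{corollary: Polish structures}, the pair $(\aut(M/u), u\M/H(u\M))$ is a compact $\aut(M/u)$-group; that is, $\aut(M/u)$ is a Polish group (a closed subgroup of the Polish group $\aut(M)$, using that $M$ is countable) acting continuously and by topological-group automorphisms on the compact, Hausdorff group $u\M/H(u\M)$. Adjoining to this the assumption that the Polish structure is small and $nm$-stable, we are precisely in the situation of Fact \ref{fact: nm-stability implies nilpotency}, which then yields directly that $u\M/H(u\M)$ is nilpotent-by-finite.

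Part (ii) is entirely parallel. Corollary \ref{corollary: Polish structures} identifies $(\aut(M), G^*/{G^*}^{00}_{\Delta,M})$ as a compact $\aut(M)$-group, so under the assumptions of smallness and $nm$-stability Fact \ref{fact: nm-stability implies nilpotency} again applies verbatim and gives that $G^*/{G^*}^{00}_{\Delta,M}$ is nilpotent-by-finite. I do not expect any real mathematical obstacle here, since the substantive inputs — the continuity of the $\aut(M)$- and $\aut(M/u)$-actions and the profiniteness/nilpotency theory for small $nm$-stable compact $G$-groups — are all in place; the only thing to verify is the bookkeeping that the stated hypotheses match those of Fact \ref{fact: nm-stability implies nilpotency}, which they do by construction.
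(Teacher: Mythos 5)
Your proposal is correct and matches the paper exactly: the corollary is stated there without a separate proof precisely because it is the immediate combination of Corollary \ref{corollary: Polish structures} (which records that the two pairs are a compact $\aut(M/u)$-group and a compact $\aut(M)$-group, respectively) with Fact \ref{fact: nm-stability implies nilpotency}. No gaps.
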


We finish with a discussion on NIP and stable situations, but before that we need to make one general observation. 

Recall that by ${G^*}^{00}$ we denote the smallest type-definable (over arbitrary parameters from $\C$) subgroup of $G^*$ of bounded index, if it exists. Note if ${G^*}^{00}$ exists, then it is type-definable over $\emptyset$, so ${G^*}^{00}={G^*}^{00}_\emptyset$. Therefore,  ${G^*}^{00}$ exists if and only if ${G^*}^{00}_A$ does not depend on the choice of the parameter set $A$.

\begin{rem}\label{rem:existence of G00}
If ${G^*}^{00}$ exists, then for any set $\Delta$ of formulas of the appropriate form, ${G^*}^{00}_{\Delta,M}$ does not depend on the choice of the model $M$ and it is type-definable over $\emptyset$; in fact,  ${G^*}^{00}_{\Delta,M}$ is the smallest $\Delta$-type-definable (over arbitrary parameters from $\C$) subgroup of $G^*$ of bounded index
\end{rem}

\begin{proof}
By the existence of ${G^*}^{00}$, there exists the smallest $\Delta$-type-definable (over arbitrary parameters from $\C$) subgroup of $G^*$ of bounded index, which we denote by ${G^*}^{00}_{\Delta}$. This component is clearly invariant under $\aut(\C)$, so it is type-definable over $\emptyset$ by a collection of formulas $\{\varphi_i(x): i \in I\}$ closed under (finite) conjunctions. We will show that ${G^*}^{00}_{\Delta}={G^*}^{00}_{\Delta,M}$ for any model $M \prec \C$.

The inclusion $(\subseteq)$ is clear. For the other inclusion it is enough to show that ${G^*}^{00}_{\Delta}$ is the intersection of a family of sets which are $\Delta$-definable over $M$. Consider any $i \in I$. We will be done if we show that there exist a $\Delta$-formula $\varphi(x)$ over $M$ and $j \in I$ such that $\varphi_j(G^*) \subseteq \varphi(G^*) \subseteq \varphi_i(G^*)$. By the definition of ${G^*}^{00}_{\Delta}$ and compactness, there is a $\Delta$-formula $\varphi^*(x)$ over $\C$ such that ${G^*}^{00}_{\Delta} \subseteq \varphi^*(G^*) \subseteq \varphi_i(G^*)$. By compactness, there is $j \in I$ such that $\varphi_j(G^*) \subseteq \varphi^*(G^*) \subseteq \varphi_i(G^*)$. Since $M \prec \C$, we can replace the parameters of $\varphi^*(x)$ by some parameters from $M$, obtaining a $\Delta$-formula $\varphi(x)$ over $M$ for which $\varphi_j(G^*) \subseteq \varphi(G^*) \subseteq \varphi_i(G^*)$.
\end{proof}

%When the theory $T:=\Th(M)$ has NIP, we know that ${G^*}^{00}_M={G^*}^{00}_\emptyset$ is type-definable over $\emptyset$. This implies that ${G^*}^{00}_{\Delta,M}$ is also type-definable over $\emptyset$. Thus, if we assume that the language of $T$ is countable, then  $G^*/{G^*}^{00}_{\Delta,M}$ is a compact, metric group and $\aut(\C)$ induces a compact group, say $\AUT$, acting continuously on  $G^*/{G^*}^{00}_{\Delta,M}$ as a group of automorphisms (see \cite{LaPi} and \cite[Fact 1.3]{KrNe}). So $(\AUT, G^*/{G^*}^{00}_{\Delta,M})$ is a compact structure interpretable in $T$ over $\emptyset$, according to \cite[Definition 1.3]{Kr-1}. It is very easy to see that if $T$ is small, then this compact structure is also small, and then \cite[Remark 2.1]{Kr-1} tells us that $G^*/{G^*}^{00}_{\Delta,M}$ is a profinite space, which implies that it is a profinite group. In fact, the assumption that the language is countable can be dropped in the last conclusion, as each small theory is a definitional extension of its reduct to a certain countable sublanguage. So we have justified the following

When the theory $T:=\Th(M)$ has NIP, we know that ${G^*}^{00}$ exists (see \cite{Sh-1} or \cite[Theorem 8.4]{Si}). By Remark \ref{rem:existence of G00}, this implies that ${G^*}^{00}_{\Delta,M}$ is type-definable over $\emptyset$. Thus, if we assume that the language of $T$ is countable, then  $G^*/{G^*}^{00}_{\Delta,M}$ is a compact, metrizable group and $\aut(\C)$ induces a compact group, say $\AUT$, acting continuously on  $G^*/{G^*}^{00}_{\Delta,M}$ as a group of automorphisms (see \cite[Lemma 3.11]{LaPi} and \cite[Fact 1.3]{KrNe}). So $(\AUT, G^*/{G^*}^{00}_{\Delta,M})$ is a compact structure interpretable in $T$ over $\emptyset$, according to \cite[Definition 1.3]{Kr-1}. It is very easy to see that if $T$ is small, then this compact structure is also small, and then \cite[Remark 2.1]{Kr-1} tells us that $G^*/{G^*}^{00}_{\Delta,M}$ is a profinite space (i.e. a totally disconnected, compact, Hausdorff space (see \cite[Theorem 1.1.12]{RiZa} for equivalent definitions of a profinite space)) which implies that it is a profinite group (see \cite[Theorem 2.1.3]{RiZa}). In fact, the assumption that the language is countable can be dropped in the last conclusion, as each small theory is a definitional extension of its reduct to a certain countable sublanguage. Alternatively, in the very simple proof of \cite[Remark 2.1]{Kr-1}, the fact that the underlying space  (in our case, $G^*/{G^*}^{00}_{\Delta,M}$) of the small compact structure in question is metrizable is irrelevant to conclude that it is profinite, hence the countability of the language can be dropped. So we have justified the following

\begin{rem}
If $T:=\Th(M)$ is small and has NIP, then $G^*/{G^*}^{00}_{\Delta,M}$ is a profinite group.
\end{rem}

%By folklore knowledge on stable groups, we easily get that if $T:=\Th(M)$ is stable, then the universal $\Delta$-definable $G$-ambit is just $S_{G,\Delta}(M)$ (i.e. $E_\Delta$ is the equality), and so it is profinite, the only minimal left ideal $\M$ consists of generic types, and there is a unique idempotent $u \in \M$ which is exactly the unique generic type containing all $\Delta$-formulas over $M$ defining ${G^*}^{00}_{\Delta,M}$. Moreover, $\M=u\M$ and $\theta \colon u\M \to G^*/{G^*}^{00}_{\Delta,M}$ is a topological isomorphism, the usual topology on $\M$ coincides with the $\tau$-topology on $u\M$, and $G^*/{G^*}^{00}_{\Delta,M}$ is profinite. So $\aut(M)=\aut(M/u)$, $H(u\M)=\{u\}$, and all Polish structures $(\aut(M), u\M)$, $(\aut(M), u\M/H(u\M))$, and $(\aut(M), G^*/{G^*}^{00}_{\Delta,M})$ are isomorphic and the underlying groups are isomorphic and profinite. 

Assume that $T:=\Th(M)$ is stable. We will be using fundamental knowledge on stability and stable groups (e.g. see \cite[Chapter 1]{Pi}). By \cite[Chapter 1, Lemma 2.2(i)]{Pi} and the shape of the formulas in $\Delta$, one easily gets that the $G$-ambit $S_{G,\Delta}(M)$ is $\Delta$-definable, so it is the universal $\Delta$-definable $G$-ambit and the relation $E_\Delta$ is trivial (by Corollary \ref{cor: definable ambit is a quotient} and Remark \ref{remark: uniqueness of E}). Since (by stability) there is a generic type in $S_{G,\Delta}(M)$, Corollary 1.9 of \cite{Ne1} implies that there is a unique minimal left ideal (equivalently, minimal subflow) $\M$ of $S_{G,\Delta}(M)$ and it consists of all the generic types. Another consequence of stability and the shape of the formulas in $\Delta$ is that any coset of ${G^*}^{00}_{\Delta,M}$ determines a unique generic type in $S_{G,\Delta}(M)$ (which is the $\Delta$-type over $M$ of some element of this coset). Together with Proposition \ref{proposition: formula for *}, this implies that there is a unique idempotent $u \in \M$ which is exactly the unique generic type containing all $\Delta$-formulas over $M$ defining ${G^*}^{00}_{\Delta,M}$. Thus, $\M=u\M$, $\theta \colon u\M \to G^*/{G^*}^{00}_{\Delta,M}$ is a topological isomorphism, the usual topology on $\M$ coincides with the $\tau$-topology on $u\M$, and $G^*/{G^*}^{00}_{\Delta,M}$ is profinite. 

By work of Newelski (e.g. see \cite[Proposition 1.6]{Ne-2} and \cite[Example 3]{Ne-1}), it follows that if the language is countable and $T$ is superstable with few countable models (so $T$ is small), then $(\AUT, G^*/{G^*}^{00}_{\Delta,M})$ is a small, $m$-stable profinite structure (in the sense of \cite{Ne-1}), which in turn implies, by \cite{Wa}, that $G^*/{G^*}^{00}_{\Delta,M}$ is abelian-by-finite.

\section*{Acknowledgments}
The author would like to thank the Referee for careful reading and comments  which helped to improve presentation.

\end{document}